\documentclass[noinfoline]{imsart}
\usepackage{amsmath,amssymb,amsthm,booktabs}
\usepackage{mathrsfs}
\usepackage{amsmath,amssymb,amsthm,enumerate,framed,graphicx,array,color,multirow,mathrsfs,amsrefs}

\usepackage[utf8]{inputenc}
\usepackage[colorlinks,citecolor=blue,urlcolor=blue]{hyperref}

\usepackage{bm}
\usepackage{euscript}
\usepackage{graphicx}

\usepackage{multicol}
\usepackage[usenames,dvipsnames,svgnames,table]{xcolor}

\usepackage{a4wide}

\usepackage{mathrsfs}
\usepackage{euscript}

\startlocaldefs
\numberwithin{equation}{section} \theoremstyle{plain}
\newtheorem{theorem}{Theorem}[section]
\newtheorem{lemma}{Lemma}[section]

\newtheorem{proposition}{Proposition}[section]

\newtheorem{definition}{Definition}
\newtheorem{remark}{Remark}[section]
\endlocaldefs

\allowdisplaybreaks

\allowdisplaybreaks[4]
\numberwithin{equation}{section}

\def\X{\boldsymbol X}

\def\bC{\mathbb{C}}
\def\bN{\mathbb{N}}
\def\bR{\mathbb{R}}
\def\bE{\mathbb{E}}\def\E{\mathbb{E}}
\def\bP{\mathbb{P}}

\def\cF{\mathcal{F}}

\def\cP{\mathcal{P}}
\def\cL{\mathcal{L}}

\def\fM{\mathfrak M}

\def\diag{\mathrm{diag}}

\def\T{\textsf{T}}

\def\1{\mathbf 1}

\newcommand{\rectangle}{{%
  \ooalign{$\sqsubset\mkern2mu$\cr$\mkern2mu\sqsupset$\cr}%
}}

\begin{document}

\title{On eigenvalues of the Brownian sheet matrix}
\runtitle{Eigenvalues of Brownian sheet matrix}

  \begin{aug}
    \author{\fnms{~ Jian} \snm{Song}\ead[label=e1]{txjsong@hotmail.com}}\footnote{J. Song is partially supported by 
    Shandong University grant 11140089963041 and National Natural Science Foundation of China grant 12071256.},
    \author{\fnms{Yimin} \snm{Xiao}\ead[label=e2]{xiaoy@msu.edu}}\footnote{Y. Xiao is partially supported by 
    NSF grant DMS-1855185.}
    \and
    \author{\fnms{Wangjun} \snm{Yuan}\ead[label=e3]{ywangjun@connect.hku.hk}}
    
    \affiliation{Shandong University and  The University of Hong Kong}
    \runauthor{J. Song,   Y. Xiao \& W. Yuan}

    \address{Research Center for Mathematics and Interdisciplinary Sciences, Shandong University, Qingdao, Shandong, 266237, China;
    and School of Mathematics, Shandong University, Jinan, Shandong, 250100, China\\
      \printead{e1}}
    
    \address{ Department of Statistics and Probability, Michigan State University, A-413 Wells Hall, East Lansing,
MI 48824, U.S.A.\\
      \printead{e2}
   }

    \address{
      Department of Mathematics, 
      The University of Hong Kong\\
      \printead{e3}
    }
  \end{aug}

\begin{keyword}[class=AMS]
    \kwd[Primary ]{60B20,~60B10,~60G15,~60G60}
  \end{keyword}

  \begin{keyword}
\kwd{random matrix}    \kwd{Brownian sheet}   
              \kwd{empirical spectral measure} \kwd{high-dimensional limit} \kwd{Dyson Brownian motion}  \kwd{McKean-Vlasov equation}
  \end{keyword}

\date{}

\begin{abstract}  We derive a system of stochastic partial differential equations satisfied by the eigenvalues 
of  the symmetric  matrix whose entries are the Brownian sheets. We prove that the sequence 
$\left\{L_{d}(s,t), (s,t)\in[0,S]\times [0,T]\right\}_{d\in\bN}$ of  
empirical spectral measures of the rescaled matrices is tight on $C([0,S]\times [0,T], \mathcal P(\bR))$ and hence is convergent as $d$ goes to infinity by Wigner's semicircle law.  We also obtain PDEs which are satisfied by the high-dimensional limiting measure.
\end{abstract}

\maketitle

{
\hypersetup{linkcolor=black}
 \tableofcontents 
}

\section{Introduction}

This paper concerns the eigenvalues of the Brownian sheet matrix  ${\boldsymbol X}= \{\X(s,t),0\le s,t<\infty\} $, which is  
a symmetric-matrix-valued process with entries $X_{ij}$ for $1\le i, j\le d$ given by
\begin{equation}\label{e:X}
X_{ij}(s,t)=\begin{cases}
b_{ij} (s,t), &i < j,\\
\sqrt{2} b_{ii}(s,t), & i=j,
\end{cases}
\end{equation}
where $b = \left\{ b_{ij}(s,t), 0\le s, t<\infty\right \}_{1\le i \le j\le d }$ is a family of independent  Brownian sheets. 

After the fundamental work \cite{Wigner55} which established the celebrated Wigner's semicircle law, Brownian motion as a 
one-parameter stochastic process  was introduced into random matrix theory by Dyson \cite{Dyson1962}.  Since then, there 
has been fruitful literature on the Dyson Brownian motion which is the system of eigenvalues of symmetric Brownian matrix 
(see, e.g.  \cite{Cepa1997, Chan1992, Rogers1993, anderson2010, EY17} and the references therein), in which It\^o's calculus 
has played a key role. By studying the high-dimensional limit of the empirical measures of the Dyson Brownian motion, one can 
provide a dynamical proof for Wigner's semicircle law (see, e.g., \cite{anderson2010}). The Dyson Brownian motion is also closely 
related to interacting particle systems, and the equation (known as the McKean-Vlasov equation) satisfied by its limiting empirical measure 
appears naturally in the study of propagation of chaos for large systems of interacting particles (see, e.g., \cite{BO19, JW18, Serfaty20}).

Multiparameter stochastic processes (or random fields) are a natural extension of one-parameter processes, they arise naturally in statistical 
mechanics (e.g. Brownian sheet appears in the Ising model \cite{KM86} and interacting particle systems \cite{KT88}), and systematic theories 
have been developed (see, e.g., \cite{CW75,K02} and the references therein). Motivated by the close connection between random matrix 
theory and  interacting particle systems, it is natural to develop theories for random matrix with entries being random fields. Recently, 
the problem on the collision of eigenvalues of symmetric (Hermitian) matrix whose entries are independent Gaussian fields  was investigated  
in \cite{Jaramillo2018, SXY20}, which  to our best knowledge are the only literature on random matrix whose entries are random fields. 

Another motivation for studying the Brownian sheet matrix $\X$ is from free probability theory.  As shown in  \cite{Voi91, VDN92}, many theorems and 
concepts in free  probability have  classical probability analogs, and furthermore free probability is closely connected with random matrix theory. In 
particular, free Brownian motion can be viewed as the high-dimensional limit of rescaled Brownian motion matrix which is define by \eqref{e:X} with 
$b$ being a family of independent Brownian motions.  Stochastic calculus for free Brownian motion was developed in \cite{BS98}. Free fractional 
Brownian motion arose naturally   in \cite{NT14} when studying the central limit theorem for long-range dependence time series in free probability, 
and the stochastic calculus  was developed in \cite{DS19}. It was shown in \cite{Pardo2016} that free fractional Brownian motion is the 
high-dimensional limit of empirical measures of the eigenvalues of rescaled fractional Brownian motion matrices.
We remark that the free Brownian motion and the free fractional Brownian motion in  \cite{BS98,NT14,Pardo2016,DS19} are one-parameter 
stochastic processes, and we believe that our study of the Brownian sheet matrix in this paper will provide a useful building block for constructing 
free random fields. 


In the present paper we shall derive a system of stochastic partial differential equations \eqref{eq-spde''} for the eigenvalue processes 
of the Brownian sheet matrix $\X$ given by \eqref{e:X}, obtain the tightness of the spectral empirical measures (Theorem \ref{Thm-tightness}), 
and show that  the limit measure satisfies a McKean-Vlasov equation \eqref{e:MV} and a Burgers' equation \eqref{e:Burgers'}.  
We briefly explain the structure of the paper below. 

 Though the Brownian sheet is a simple multivariable extension 
of standard Brownian motion, 
the stochastic calculus for the Brownian sheet that one needs for deriving the stochastic partial differential equations for 
the eigenvalues of the Brownian sheet matrix turns out to be highly non-trivial and cannot be adapted directly from the 
classical It\^o calculus. In Section \ref{sec:stochastic-calculus}, we follow the approach of Cairoli and Walsh in \cite{CW75} 
and develop stochastic calculus tools for the multi-dimensional Brownian sheet on the plane for our purpose.  The main results in   
this section are Theorems \ref{Thm-Green formula} and \ref{Coro-Green formula} which are multi-dimensional Green's formulas.

In Section \ref{sec:SPDE},  by applying classical It\^o's formula together with Green's formulas (Theorems \ref{Thm-Green formula} 
and \ref{Coro-Green formula} ), 
we derive the system of stochastic partial differential equations \eqref{eq-spde''} for the eigenvalues of the Brownian sheet matrix 
$\X$. Compared with the following system of SDEs for the classical Dyson Brownian motion: for $1\le i\le d$,
\begin{equation}\label{Eq:Dyson}
d\lambda_i(t) = \sqrt 2 dW_i(t) + \sum_{j\neq i}\frac1{\lambda_i(t)-\lambda_j (t)}dt,
\end{equation}
where $W=(W_1, \dots, W_d)$ is a standard $d$-dimensional Brownian motion, we remark that 
eq. \eqref{eq-spde''} bears some resemblance to \eqref{Eq:Dyson} but has several extra high-order terms.

In Section \ref{sec:limit}, we study the high-dimensional limit of empirical distributions for the eigenvalue processes of $\X$. 
In Section \ref{sec:tightness}, we establish the tightness of the set of empirical spectral 
measures which are viewed as $C([0,S]\times[0,T], \mathcal P(\bR))$-valued random elements (see Theorem 
\ref{Thm-tightness}). This guarantees that every sequence of the empirical spectral measures has a subsequence 
which converges weakly. The tightness together with the classical Wigner's semicircle law implies the existence and 
uniqueness of the high-dimensional limit of the empirical spectral measures (see Theorem \ref{Thm-limit measure}).  
In Section \ref{sec:PDE},  we derive partial differential equations \eqref{e:MV} and \eqref{e:Burgers'} that are satisfied 
by the limiting measure, by using the property of the semicircle distribution.

Finally, in Appendix \ref{sec:appendix} we provide some results in matrix analysis which are needed in our analysis.

\section{Stochastic calculus for the Brownian sheet}\label{sec:stochastic-calculus}
In this section, we shall apply the stochastic calculus on the plane developed in \cite{CW75} to derive  Green's formula 
for  the multi-dimensional Brownian sheet,  which is a key ingredient for studying SPDEs for the eigenvalues 
in Section \ref{sec:SPDE}.  

\subsection{Some preliminaries on stochastic calculus on the plane}
In this subsection, we recall from Cairoli and 
Walsh \cite{CW75} some preliminaries for stochastic 
calculus on the plane. 

Define the partial order ``$\prec$'' on $\bR^2$ as follows. For any $(s_1, t_1), \, (s_2, t_2) \in \bR^2$,
\[(s_1,t_1)\prec (s_2, t_2), \text{ iff } s_1\le s_2, \, t_1\le t_2,\]
and write 
\[(s_1,t_1)\prec \prec(s_2, t_2), \text{ iff } s_1< s_2, \, t_1< t_2.\]

Let $(\Omega, \mathcal G, \bP)$ be a probability space and let the filtration $\cF=\{\cF_z, z\in \bR_+^2\}$ be a family of 
sub-$\sigma$-field of $\mathcal G$ satisfying
\begin{enumerate}
\item $\cF_z\subset \cF_{z'}$ if $z\prec z'$;
\item $\cF_0$ contains all null sets of $\mathcal G$;
\item for each $z, \cF_z=\bigcap\limits_{z\prec\prec z'}\cF_{z'}$;
\item for each $z, \cF_{z}^1$ and $\cF_{z}^2$ are conditionally independent given $\cF_z$.
\end{enumerate}
Here, for $z=(s,t)\in \bR_+^2$, 
\begin{align*}
\cF_{z}^1=\cF_{s\infty}:= \underset{v}\vee \cF_{sv};~~~
\cF_{z}^2=\cF_{\infty t}:= \underset{u}{\vee}\cF_{ut}.
\end{align*}
In particular, the augmented filtration generated by a finite family of independent Brownian sheets satisfies the 
above conditions.

Let $Y=\{Y_z, z\in \bR_+^2\}$ be a process such that for each $z$ the random variable $Y_z$ is integrable.  
We recall the definitions of martingale, strong martingale, weak martingale, and increasing process relative to 
$\cF$ in \cite{CW75}. 
\begin{definition}
$Y$ is a \emph{martingale} if
\begin{enumerate}
\item $Y$ is adapted;
\item $\bE[Y_{z'}|\cF_z]=Y_z,$ for each $z\prec z'$.
\end{enumerate}
\end{definition}

Suppose $z=(s,t)$ and $z'=(s',t')$ such that $z\prec\prec z'$. We denote by $(z, z']$ the rectangle 
$(s,s']\times(t,t']$. The increment of $Y$ over the rectangle $(z, z']$ is 
\[Y((z,z'])=Y_{s't'}-Y_{st'}-Y_{s't}+Y_{st}.\]
\begin{definition}~
\begin{itemize}
\item[(a)] $Y$ is a \emph{weak martingale} if 
\begin{enumerate}
\item $Y$ is adapted;
\item $\bE[Y((z,z'])|\cF_z]=0$ for each $z\prec\prec z'.$
\end{enumerate}
\end{itemize}
\begin{itemize}
\item[(b)] $Y$ is an \emph{$i$-martingale} ($i=1,2$) if 
\begin{enumerate}
\item $Y$ is $\cF_z^i$-adapted;
\item $\bE[Y((z,z'])|\cF_z^i]=0$ for each $z\prec\prec z'.$
\end{enumerate}
\end{itemize}
\begin{itemize}
\item[(c)] $Y$ is a \emph{strong martingale} if 
\begin{enumerate}
\item $Y$ is adapted;
\item $Y$ vanishes on the axes;
\item $\bE[Y((z,z'])|\cF_z^1\vee \cF_z^2]=0$ for each $z\prec\prec z'.$
\end{enumerate}
\end{itemize}
\end{definition}

\begin{definition}
 $Y$ is an \emph{increasing process} if 
 \begin{enumerate}
 \item $Y$ is right-continuous and adapted;
 \item $Y_z=0$ on the axes;
 \item $Y(A)\ge0$ for each rectangle $A\subset \bR_+^2$. 
 \end{enumerate}
\end{definition}

Let $M=\{M_z, z\in\bR_+^2\}$ be a martingale relative to $\cF$. Then $M$ is both a 1-martingale and 2-martingale, 
i.e., $\{M_{s0}, \cF_{s0}^1, s\in\bR_+\}$ and $\{M_{0t}, \cF_{0t}^2, t\in\bR_+\}$ are martingales. The converse is also true. 

 Now we assume that $M$ is a square integrable martingale. By \cite[Theorem 1.5]{CW75}, there exists an increasing 
 process $\langle M\rangle$  such that $M^2-\langle M\rangle$ is a weak martingale.  For each fixed $t$, let 
 $\{[M]^1_{st}, s\in\bR_+ \}$ be the unique increasing process which is predictable relative to $\{\cF_{st}, s\in \bR_+\}$
 such that $\{M^2_{st}-[M]^1_{st}, s\in \bR_+\}$ is a martingale.  Similarly, one can define $[M]^2$. As pointed by  
 \cite[p.121]{CW75}, for a strong martingale $M$, either $[M]^1$ or $[M]^2$ can serve as the process $\langle M\rangle$. 
Furthermore, by \cite[Theorem 1.9]{CW75},  if either $\cF$ is generated by the Brownian sheet or $M$ has finite fourth 
moment, then $[M]^1=[M]^2$, and hence we can choose $\langle M \rangle=[M]^1=[M]^2$. As a consequence,   
for any fixed $t$, $\{M^2_{st}-\langle M \rangle_{st}, s\in \bR_+\}$ is a martingale, and similarly, for any fixed $s$, 
$\{M^2_{st}-\langle M \rangle_{st}, t\in \bR_+\}$ is a martingale.   As in \cite{CW75}, we shall use $d_s \langle M \rangle_{st}$ 
($d_t \langle M \rangle_{st}$, resp.) to denote the differential of $\langle M\rangle $ with respect to $s$ ($t$, resp.).

For two square integrable martingales $M$ and $N$, we denote by $\langle M, N\rangle$ any process which is the 
difference of two increasing processes such that $MN-\langle M, N\rangle$ is a weak martingale. 
One can choose, for instance, 
\begin{equation}\label{e:quad-cov}
\langle M, N\rangle =\frac12\big(\langle M+N \rangle- \langle M\rangle -\langle N\rangle \big).
\end{equation}
Define $[M,N]^i=\frac12\left([M+N]^i-[M]^i-[N]^i\right)$ for $i=1,2.$ Then either $[M,N]^1$ or $[M,N]^2$ can serve as the process 
$\langle M,N \rangle.$ Two martingales $M$ and $N$ are said to be  \emph{orthogonal} if $MN$ is a weak martingale, 
and we write $M\perp N$.

For $p\ge 1$, let $\fM^p$  denote  the set  of right-continuous martingales $M=\{M_z, z\in \bR_+^2\}$ such that 
$M=0$ on the axes and $\bE[|M_z|^p]<\infty$ for all $z\in \bR_+^2$. Let $\fM^p_c$ (resp. $\fM_s^p$) be the set of 
continuous (resp. strong) martingales in $\fM^p$. Similarly, let $\fM^p(z_0)$ (resp. $\fM^p_c(z_0), \fM^p_s(z_0)$) 
be the set of right-continuous (resp. continuous, strong) martingales $M=\{M_z, z\prec z_0\}$ such that $M_z=0$ 
on the axes and $\bE[|M_z|^p] <\infty$ for all $z\prec z_0$. 

Below we recall some results which will be used in our proofs. 
\begin{theorem}{\cite[Theorem 1.2]{CW75}} \label{Thm-BDG 2dim}
Let $\{M_z:z \in \bR_+^2\}$ be a right-continuous martingale. Then for $p > 1$,
\begin{align*}
	\bE \left[ \sup_z |M_z|^p \right]
	\le \left( \dfrac{p}{p-1} \right)^{2p} \sup_z \bE \left[ |M_z|^p \right].
\end{align*}
\end{theorem}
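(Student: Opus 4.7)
The plan is to iterate the classical one-parameter Doob $L^p$ maximal inequality in the two coordinate directions, exploiting the fact that a two-parameter martingale restricts to an ordinary one-parameter martingale when either coordinate is frozen. The factor $(p/(p-1))^{2p}$ then arises naturally as the square of the one-parameter Doob constant $(p/(p-1))^p$.

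First I would observe that, for each fixed $s$, the process $\{M_{s,t}\}_{t\ge 0}$ is a martingale with respect to the filtration $\{\cF_{s,t}\}_{t\ge 0}$, since $\bE[M_{s,t'}\mid \cF_{s,t}]=M_{s,t}$ for $t\le t'$ is a particular case of the two-parameter martingale property. Applying the classical Doob $L^p$ inequality in $t$ yields
$$\bE\left[\sup_t |M_{s,t}|^p\right]\le \left(\frac{p}{p-1}\right)^p \sup_t \bE[|M_{s,t}|^p].$$
Next I would set $N_s:=\sup_t |M_{s,t}|$ and show that $\{N_s\}_s$ is a nonnegative submartingale with respect to the enlarged filtration $\{\cF_s^1\}_s=\{\cF_{s,\infty}\}_s$. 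The key ingredient here is the conditional independence condition~(4) in the excerpt, which, combined with the ordinary martingale property, gives $\bE[M_{s',t}\mid \cF_s^1]=M_{s,t}$ for $s\le s'$ and each fixed $t$; Jensen then yields $\bE[|M_{s',t}|\mid \cF_s^1]\ge |M_{s,t}|$, and taking suprema (first over a countable dense set in $t$, then using right-continuity of $M$ in $t$) produces $\bE[N_{s'}\mid \cF_s^1]\ge N_s$. A second application of the one-parameter Doob $L^p$ inequality, now to the submartingale $\{N_s\}_s$, gives
$$\bE\left[\sup_s N_s^p\right] \le \left(\frac{p}{p-1}\right)^p \sup_s \bE[N_s^p],$$
and chaining this with the first step yields the desired bound, since $\sup_s N_s^p=\sup_{s,t}|M_{s,t}|^p$ and $\sup_s\bE[N_s^p]$ is controlled by $(p/(p-1))^p\sup_{s,t}\bE[|M_{s,t}|^p]$.

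The main obstacle I expect is the intermediate submartingale claim for $\{N_s\}_s$. This does not follow from the two-parameter martingale definition alone---it genuinely uses the F4 conditional-independence axiom of the filtration---and one also has to handle the continuous parameter $t$ inside the supremum carefully, via a countable dense approximation together with the right-continuity hypothesis on $M$. Once this submartingale property is in hand, the remainder of the argument is just a routine double application of the classical one-parameter Doob $L^p$ inequality, with the constants multiplying to give $(p/(p-1))^{2p}$.
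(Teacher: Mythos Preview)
The paper does not supply its own proof of this statement: it simply quotes the result as \cite[Theorem~1.2]{CW75} and uses it as a black box. Your proposed argument---iterating the one-parameter Doob $L^p$ inequality, first in $t$ with $s$ fixed, then in $s$ applied to the submartingale $N_s=\sup_t|M_{s,t}|$ with respect to the filtration $\{\cF_s^1\}$---is exactly the standard proof of Cairoli's maximal inequality and is essentially how the result is established in \cite{CW75} itself. The points you flag (that the submartingale property of $\{N_s\}$ genuinely requires the F4 conditional-independence axiom via the identity $\bE[M_{s',t}\mid\cF_s^1]=\bE[M_{s',t}\mid\cF_{s,t}]=M_{s,t}$, and that the supremum over $t$ must be handled through a countable dense set plus right-continuity) are precisely the technical ingredients needed, and your treatment of them is correct.
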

For any $z\in \bR_+^2$, we denote the rectangle $(0, z]$ by $R_z$. We also fix $z_0\in \bR_+^2$. 

\begin{theorem}{\cite[Proposition 1.6]{CW75}} \label{Thm-prop1.6}
Let $M,N\in \fM^2(z_0)$. Then 
\begin{enumerate}
\item $\bE[(MN)(D)|\cF_z]=\E[M(D) N(D)|\cF_z]$ for each rectangle $D=(z, z']\subset R_{z_0}$;
\item $M\perp N$  iff   ~$\E[M(D)N(D)|\cF_z]=0$ for each rectangle $D=(z, z']\subset R_{z_0}$. 
\end{enumerate}
\end{theorem}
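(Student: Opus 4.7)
The plan is to prove (1) by direct expansion, and then deduce (2) essentially for free from the definition of weak martingale.

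For (1), fix $z = (s,t)$ and $z' = (s',t')$ with $z \prec\prec z'$, and write the four corners of $D$ as $a=(s,t)$, $b=(s',t)$, $c=(s,t')$, $d=(s',t')$. Then
\[
(MN)(D) = M_d N_d - M_b N_b - M_c N_c + M_a N_a,
\]
while $M(D)N(D)$ expands into a sum of sixteen terms of the form $\pm M_x N_y$ with $x,y \in \{a,b,c,d\}$. The strategy is to take $\bE[\cdot \mid \cF_a]$ of each cross term and show that after summing the nine off-diagonal cross terms, everything that is not of the form $\bE[M_x N_x \mid \cF_a]$ cancels, leaving exactly $\bE[(MN)(D)\mid \cF_a]$.

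The reduction of each cross term proceeds by case analysis. For pairs $(x,y)$ with $x \prec y$ (e.g.\ $(M_b, N_d)$ since $b \prec d$), I first condition on $\cF_x$ and use the one-parameter martingale property of $M$ or $N$ to replace $N_y$ by $N_x$ (or vice versa), so that $\bE[M_x N_y \mid \cF_a] = \bE[M_x N_x \mid \cF_a]$. When one of the indices equals $a$, the martingale property gives $\bE[M_a N_y \mid \cF_a] = M_a N_a$ and similarly. The only genuinely two-parameter step is $\bE[M_b N_c \mid \cF_a]$ and $\bE[M_c N_b \mid \cF_a]$, where $b$ and $c$ are incomparable. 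Here I use the adaptedness inclusions $M_b \in \cF_b \subset \cF^2_a$ and $N_c \in \cF_c \subset \cF^1_a$, together with conditional independence of $\cF^1_a$ and $\cF^2_a$ given $\cF_a$ (property (4) of the filtration), to factor $\bE[M_b N_c \mid \cF_a] = \bE[M_b \mid \cF_a]\bE[N_c \mid \cF_a] = M_a N_a$, and likewise for the symmetric term. A bookkeeping of signs then collapses the sixteen-term expansion to $\bE[M_d N_d \mid \cF_a] - \bE[M_b N_b \mid \cF_a] - \bE[M_c N_c \mid \cF_a] + M_a N_a$, which is precisely $\bE[(MN)(D)\mid \cF_a]$.

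Part (2) is then immediate: by definition, $M \perp N$ means $MN$ is a weak martingale, and since $MN$ inherits adaptedness from $M$ and $N$, this reduces to the vanishing of $\bE[(MN)(D)\mid \cF_z]$ for every rectangle $D=(z,z'] \subset R_{z_0}$. By (1), this is the same as $\bE[M(D)N(D)\mid \cF_z] = 0$, giving the stated equivalence.

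The only step requiring real thought is the handling of the incomparable pair $(b,c)$; all other cases are one-parameter martingale reductions. The main obstacle is making sure property (4) is applied correctly, since without conditional independence of $\cF^1_a$ and $\cF^2_a$ the cross terms $\bE[M_b N_c \mid \cF_a]$ and $\bE[M_c N_b \mid \cF_a]$ would not simplify, and the cancellation in the bookkeeping would fail.
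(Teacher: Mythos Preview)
Your argument is correct. The expansion and term-by-term reduction works exactly as you describe: the comparable cross terms reduce by the two-parameter martingale property (what you call the ``one-parameter'' reduction is really just $\bE[N_y\mid\cF_x]=N_x$ for $x\prec y$, which is the definition of martingale here), and the two incomparable cross terms $\bE[M_bN_c\mid\cF_a]$ and $\bE[M_cN_b\mid\cF_a]$ collapse to $M_aN_a$ via conditional independence of $\cF^1_a$ and $\cF^2_a$ given $\cF_a$. The sign bookkeeping then gives exactly $\bE[(MN)(D)\mid\cF_a]$, and (2) follows from (1) and the definition of weak martingale.

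Note, however, that the paper does not supply its own proof of this statement: it is quoted verbatim as \cite[Proposition~1.6]{CW75} and used as a black box. So there is nothing in the paper to compare your approach against. Your proof is essentially the standard one found in Cairoli--Walsh, and the key structural point you identify --- that the only genuinely two-parameter ingredient is the conditional-independence axiom applied to the incomparable corners --- is exactly right.
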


\begin{theorem}{\cite[Proposition 1.8]{CW75}} \label{Thm-prop1.8}
If $M\in \fM_s^2(z_0)$, then $[M]^i$ is the unique $\cF_z^i$-predictable increasing process such that 
for $i=1,2$,
\begin{align*}
	\bE \left[ M(D)^2|\cF_z^i \right]
	= \bE \left[ (M^2)(D)|\cF_z^i \right]
	= \bE \left[ [M]^i(D)|\cF_z^i \right]
\end{align*}
for each rectangle $D = (z,z'] \subseteq R_{z_0}$. Consequently, for $M,N\in \fM_s^2(z_0)$, noting
 that $MN=\frac12\left((M+N)^2-M^2-N^2\right)$, we have for $i=1,2$,
\begin{align*}
	\bE \left[ M(D)N(D)|\cF_z^i \right]
	= \bE \left[ (MN)(D)|\cF_z^i \right]
	= \bE \left[ [M,N]^i(D)|\cF_z^i \right]
\end{align*}
 
\end{theorem}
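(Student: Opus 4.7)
The plan is to prove the chain of identities for $M\in\fM_s^2(z_0)$ by combining an explicit algebraic expansion of the rectangle increments with the strong martingale hypothesis $\bE[M(D)\mid\cF_z^1\vee\cF_z^2]=0$ and the Doob--Meyer characterisation of the one-parameter compensator $[M]^i$. By symmetry I treat only $i=1$, and throughout write $z=(s,t)$, $z'=(s',t')$, $D=(z,z']$, noting that $\cF_z^1=\cF_{s\infty}$.

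For the second identity $\bE[(M^2)(D)\mid\cF_z^1]=\bE[[M]^1(D)\mid\cF_z^1]$, the key is that the one-parameter Doob--Meyer relation $\bE[M_{s'u}^2-[M]^1_{s'u}\mid\cF_{su}]=M_{su}^2-[M]^1_{su}$ (valid by the definition of $[M]^1$) extends to conditioning on the larger $\sigma$-field $\cF_{s\infty}$. Indeed, $M_{s'u}^2-[M]^1_{s'u}$ is $\cF_{\infty u}=\cF_{(s,u)}^2$-measurable, so the conditional independence (F4) of $\cF_{(s,u)}^1=\cF_{s\infty}$ and $\cF_{(s,u)}^2$ given $\cF_{su}$ yields $\bE[M_{s'u}^2\mid\cF_{s\infty}]=M_{su}^2+\bE[[M]^1_{s'u}-[M]^1_{su}\mid\cF_{s\infty}]$ for each $u\in\{t,t'\}$. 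Subtracting the $u=t$ case from the $u=t'$ case cancels the $M_{su}^2$ terms and produces the desired identity.

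The first identity $\bE[M(D)^2\mid\cF_z^1]=\bE[(M^2)(D)\mid\cF_z^1]$ is where the strong martingale hypothesis plays its role. Direct expansion gives
\[
(M^2)(D)-M(D)^2 \;=\; 2\,M(D)\bigl(M_{st'}+M_{s't}-M_{st}\bigr)+2(M_{s't}-M_{st})(M_{st'}-M_{st}).
\]
Conditioning on $\cF_{s\infty}$, the $\cF_{s\infty}$-measurable factors $M_{st'}$ and $M_{st}$ pull out; the 1-martingale property (implied by the strong martingale property) gives $\bE[M(D)\mid\cF_{s\infty}]=0$, while the increment $M_{s't}-M_{st}$ can be realised as the 1-martingale increment over the rectangle $((s,0),(s',t)]$ (using that $M$ vanishes on the axes), hence $\bE[M_{s't}-M_{st}\mid\cF_{s\infty}]=0$. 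The only genuinely delicate term is the cross term $\bE[M(D)\,M_{s't}\mid\cF_{s\infty}]$: since $M_{s't}$ is $\cF_{\infty t}=\cF_z^2$-measurable, I condition first on the larger $\sigma$-field $\cF_z^1\vee\cF_z^2$ to pull $M_{s't}$ outside the inner expectation, after which the strong martingale property $\bE[M(D)\mid\cF_z^1\vee\cF_z^2]=0$ annihilates the factor.

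Uniqueness of $[M]^i$ is inherited from the uniqueness of the one-parameter Doob--Meyer compensator of $\{M_{ru}^2\}_r$ at each fixed $u$: any $\cF_z^1$-predictable increasing process satisfying the rectangle identity, upon specialising $D$ to rectangles whose lower-left corner lies on the axis $\{s=0\}$ and varying $u$, must coincide with $[M]^1$ line by line. Finally, the polarisation $MN=\tfrac12\bigl((M+N)^2-M^2-N^2\bigr)$, applied rectangle-wise and coordinate-wise, extends the chain of identities from one strong martingale to two, by invoking the single-martingale case for each of $M+N$, $M$, and $N$. The main obstacle is the cross term $\bE[M(D)\,M_{s't}\mid\cF_{s\infty}]$: because $M_{s't}$ sits in $\cF_z^2$ and is not $\cF_z^1$-measurable, the 1-martingale property alone cannot handle it, and this is precisely where the strong martingale hypothesis — conditioning on the combined $\sigma$-field $\cF_z^1\vee\cF_z^2$ — is indispensable.
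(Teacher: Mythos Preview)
The paper does not give a proof of this statement: the first part is quoted verbatim from \cite[Proposition~1.8]{CW75}, and the ``Consequently'' clause is justified in-line by the polarisation identity $MN=\tfrac12\bigl((M+N)^2-M^2-N^2\bigr)$ with no further argument. Your proposal therefore cannot be compared against a proof in this paper; what you have written is a self-contained reconstruction of the Cairoli--Walsh argument.

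That said, your argument is sound. The algebraic expansion of $(M^2)(D)-M(D)^2$ is correct, and your handling of the cross term $\bE[M(D)\,M_{s't}\mid\cF_{s\infty}]$ via the intermediate conditioning on $\cF_z^1\vee\cF_z^2$ is exactly the point where the \emph{strong} martingale hypothesis (as opposed to the ordinary martingale property) is needed. The second identity is cleanly obtained from the one-parameter Doob--Meyer characterisation of $[M]^1$ together with the conditional-independence axiom (F4), which gives $\bE[X\mid\cF_{s\infty}]=\bE[X\mid\cF_{su}]$ for any $\cF_{\infty u}$-measurable integrable $X$; this is the standard equivalent formulation of (F4), and you might state it explicitly for clarity. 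The uniqueness and polarisation steps are routine. In short, you have supplied a correct proof where the paper simply cites one.
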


\begin{theorem}{\cite[Theorem 1.9]{CW75}} \label{Thm-prop1.9}
Let  $M\in \fM_s^2$. Assuming either the filtration $\cF$ is generated by the Brownian sheet or $M$ is 
continuous with finite fourth moment, we have $[M]^1 = [M]^2$.
\end{theorem}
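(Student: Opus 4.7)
My plan is to identify both $[M]^1_z$ and $[M]^2_z$ as the $L^2$-limit of one common sequence of Riemann sums of squared rectangular increments of $M$, so that uniqueness of the limit forces the two processes to coincide. Fix $z \in \bR_+^2$ and let $\pi_n$ be refining partitions of $R_z$ into disjoint sub-rectangles $D_k^n$ with lower corners $z_k^n$ and mesh tending to zero; set $S_n = \sum_k M(D_k^n)^2$. Theorem \ref{Thm-prop1.8} gives, for each $D_k^n$ and each $i \in \{1,2\}$,
\[ \bE\bigl[ M(D_k^n)^2 \bigm| \cF_{z_k^n}^i \bigr] = \bE\bigl[ [M]^i(D_k^n) \bigm| \cF_{z_k^n}^i \bigr], \]
so on summation and telescoping (using that $[M]^i$ vanishes on the axes) one has $\bE[S_n] = \bE[[M]^i_z]$ simultaneously for $i=1,2$. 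The core step is to upgrade this to $S_n \to [M]^i_z$ in $L^2$ for each $i$; once this is done, a single $L^2$-limit must coincide with both $[M]^1_z$ and $[M]^2_z$, so $[M]^1_z = [M]^2_z$ a.s., and right-continuity and joint monotonicity of the two fields in $z$ promote this pointwise identity to equality of processes on $\bR_+^2$.

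To establish the $L^2$-convergence I would expand $\bE[(S_n - [M]^i_z)^2]$ into diagonal terms, controlled by a BDG-type bound (Theorem \ref{Thm-BDG 2dim} applied to the one-parameter slice processes $\{M_{\cdot,t}\}$ or $\{M_{s,\cdot}\}$), and cross terms $\bE[M(D_j^n)^2 M(D_k^n)^2]$ for distinct rectangles. The strong-martingale orthogonality $\bE[M(D)M(D') \mid \cF_{z_0}^1 \vee \cF_{z_0}^2] = 0$ whenever $D'$ lies above $D$ in the $\prec$-order makes the cross increments uncorrelated, but transferring this identity to the $[M]^i$-characterization (which conditions only on $\cF^i$) is exactly where the two alternative hypotheses enter. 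If $M$ has a finite fourth moment, the remaining error is absorbed by a higher-order maximal inequality from Theorem \ref{Thm-BDG 2dim} with $p = 4$ combined with continuity of $M$ to force $\max_k|M(D_k^n)| \to 0$. If instead $\cF$ is generated by the Brownian sheet $W$, one uses the conditional-independence axiom (4) to decouple $\cF^1$ and $\cF^2$ given $\cF_z$, or equivalently approximates $M$ in $\fM_s^2$ by simple stochastic integrals of predictable processes against $W$, for which $[M]^1 = [M]^2$ reduces to the common integral of the squared integrand against $ds\,dt$; then one passes to the $L^2$-limit.

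The main obstacle is precisely this cross-term control: a naive estimate produces error terms whose taming requires either fourth moments of $M$ or a Markov-type decoupling specific to the Brownian sheet filtration. This is why the result is stated as a dichotomy rather than as a clean fact about all of $\fM_s^2$, and the dichotomy is essentially forced by the mismatch between the \emph{single-filtration} conditioning available in Theorem \ref{Thm-prop1.8} and the \emph{joint-filtration} conditioning built into the definition of a strong martingale. A single hypothesis that removes both crutches would need a genuinely new device for matching an $\cF^1$-predictable increasing process to an $\cF^2$-predictable one at the sample-path level.
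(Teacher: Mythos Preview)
The paper does not prove this statement. Theorem~\ref{Thm-prop1.9} is quoted verbatim from \cite[Theorem 1.9]{CW75} as background material in the preliminaries subsection, with no argument supplied; the authors invoke it only to justify equation~\eqref{e:quad1=quad2} later on. There is therefore no in-paper proof to compare your proposal against.

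For what it is worth, your sketch is broadly in the spirit of the original Cairoli--Walsh argument: identifying both bracket processes with a common limit of sums of squared rectangular increments is the natural route, and your diagnosis of why the dichotomy of hypotheses appears (conditioning on $\cF^i$ versus $\cF^1\vee\cF^2$) is accurate. One point to tighten if you pursue this: the passage from $\bE[S_n]=\bE[[M]^i_z]$ to $S_n\to[M]^i_z$ in $L^2$ is not just a matter of controlling cross terms in $S_n$---you also need to identify the limit with the \emph{unique} $\cF^i$-predictable increasing process of Theorem~\ref{Thm-prop1.8}, which requires checking predictability of the limit (continuity of $M$ handles this) and that $M^2-\lim S_n$ has the correct martingale property in each direction. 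In the Brownian-sheet-filtration branch, the cleanest route in \cite{CW75} is not approximation but the integral representation of strong martingales, which reduces both brackets to the same deterministic-looking object directly.
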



\subsection{On $\psi\cdot MN$ and $J_{MN}$ }\label{sec:J-MN}

Let us recall from \cite[Section 6]{CW75} the notion $J_M$ of a continuous martingale $M\in \mathfrak M_s^4$ 
on $\bR_+^2$.  
Recall the notation $R_{st} = (0,s] \times (0,t]$. By  \cite[Eq.~(6.3)]{CW75}, 
\begin{align*}
	J_M(s_0,t_0)
	=& \int_0^{s_0} M(s,t_0) M(ds,t_0) - \int_{R_{s_0t_0}} M(s,t) dM(s,t)\\
	=& \int_0^{t_0} M(s_0,t) M(s_0,dt) - \int_{R_{s_0t_0}} M(s,t) dM(s,t)\\
	=& \frac12 M^2(s_0, t_0)-\frac12\langle M\rangle_{s_0, t_0}-\int_{R_{s_0t_0}} M(s,t) dM(s,t).
\end{align*}
Heuristically, one has $dJ_M(s,t)= M(s,dt) M(ds,t)$ (see \cite{CW75}). Similarly, for two $\cF$-adapted martingales 
$M$ and $N$,  we introduce the following generalization $J_{MN}$ which induces the measure  $M(s, dt) N(ds,t)$ on $\bR_+^2$, 
\begin{align} \label{def-J}
	J_{MN}(s_0,t_0)
	= \int_0^{s_0} M(s,t_0) N(ds,t_0) - \int_{R_{s_0t_0}} M(s,t) dN(s,t),
\end{align}
assuming that the right-hand side is well-defined.  Clearly we have $J_M=J_{MM}$.  Analogous to $J_M$ in \cite[Theorem 6.1]{CW75}, 
$J_{MN}$ will play a key role in the multi-dimensional Green's formula in the forthcoming Theorems \ref{Thm-Green formula} and 
\ref{Coro-Green formula}.

Similar to \cite {CW75}, we shall represent $J_{MN}$ by a new type of stochastic integral denoted by $\psi\cdot MN$ which will be 
defined in the sequel.  Firstly, we need to introduce another order relation ``$\curlywedge$'' in $\bR_+^2$ which is complementary 
to ``$\prec$'' and plays an essential role in the definition of $\psi\cdot MN$. For $z=(s,t)$ and $z'=(s', t')$, we say $z \curlywedge z'$ 
if $s\le s'$ and $t\ge t'$, and $z{\curlywedge\atop \curlywedge} z'$ if $s<s'$ and $t>t'$.  In the $st$-plane where the $s$-axis is  
horizontal and the $t$-axis is vertical,  $z\curlywedge z'$ means that $z$ is on the upper left of $z'$ in the plane. As a comparison, 
$z\prec z'$ means that $z$ is on the lower left of $z'$. 

%
%
%
%

\begin{proposition} \label{Prop 2.1}
Suppose  $M,N\in \fM_s^2(z_0)$.  Let  $A=(z_A, z_A']$ and $B=(z_B, z_B']$ be two rectangles such that $A\curlywedge B$,
 i.e.,  $z_1\curlywedge z_2$ for all $z_1\in A$ and $z_2\in B$. 

Define the process $X= \{X_z, ~z \in \bR_+^2\}$ by
\begin{align*}
X_z = \xi M(A\cap R_z) N(B \cap R_z), ~z \in \bR_+^2,
\end{align*}
where $\xi$ is bounded and $\cF_{z_A\vee z_B}$-measurable. Then $X$ belongs to $\fM^2(z_0)$, it  is continuous if $M$ is, and 
\begin{align}\label{e:quad-var}
	\langle X \rangle_z
	=\xi^2 \iint_{R_z \times R_z} \1_A(z_1) \1_B(z_2) d[M]^2_{z_1} d[N]^1_{z_2} . 
\end{align}
\end{proposition}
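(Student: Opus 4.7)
The plan is to exploit the disjoint-direction condition $A\curlywedge B$ (which forces $s_A'\le s_B$ and $t_B'\le t_A$) to obtain a product factorization of $X$ in the non-trivial region and then reduce every martingale/weak-martingale identity to the conditional independence of $\cF^1$ and $\cF^2$ given $\cF$ (property~(4) of the filtration). For $z=(s,t)$ with $z\succ z_A\vee z_B=(s_B,t_A)$, the intersection $A\cap R_z$ already has its $s$-coordinate saturated at $s_A'$ and $B\cap R_z$ has its $t$-coordinate saturated at $t_B'$, so
\[X_z=\xi\,\tilde M(t)\,\tilde N(s),\quad \tilde M(t):=M\bigl((s_A,s_A']\times(t_A,t\wedge t_A']\bigr),\ \tilde N(s):=N\bigl((s_B,s\wedge s_B']\times(t_B,t_B']\bigr),\]
while $X_z=0$ otherwise. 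Since $\{X_z\neq 0\}\subset\{z\succ z_A\vee z_B\}$, on which $\xi\in\cF_{z_A\vee z_B}\subset\cF_z$, the process $X$ is adapted, and continuity of $X$ under continuity of $M$ (and $N$) is immediate from the factored form. From the strong-martingale property of $M,N$ together with the tower property one reads off that $\tilde M$ is a $\{\cF^2_t\}$-martingale in $t$ and $\tilde N$ is a $\{\cF^1_s\}$-martingale in $s$.

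For the martingale property of $X$ with $z\prec z'$ I would split cases. If $z\not\succ z_A\vee z_B$, say $s<s_B$, I enlarge the conditioning to $\cF^1_{s_B}\supset\cF_z$; the factors $\xi$ and $\tilde M(t')$ both lie in $\cF^1_{s_B}$ and pull out, leaving $\E[\tilde N(s')\mid \cF^1_{s_B}]=\tilde N(s_B)=0$ by the 1-martingale property of $\tilde N$, and the case $t<t_A$ is symmetric via $\cF^2_{t_A}$. If $z\succ z_A\vee z_B$, $\xi\in\cF_z$ pulls out and one expands
\[\tilde M(t')\tilde N(s')-\tilde M(t)\tilde N(s)=(\tilde M(t')-\tilde M(t))\tilde N(s)+\tilde M(t)(\tilde N(s')-\tilde N(s))+(\tilde M(t')-\tilde M(t))(\tilde N(s')-\tilde N(s)).\]
The first two terms have zero $\cF_z$-conditional mean by the one-parameter martingale properties of $\tilde M,\tilde N$; for the cross term one uses $\tilde M(t')-\tilde M(t)\in\cF^1_{s_A'}\subset\cF^1_s$ and $\tilde N(s')-\tilde N(s)\in\cF^2_{t_B'}\subset\cF^2_t$ (via $s_A'\le s$ and $t_B'\le t$), so property~(4) factors its $\cF_z$-conditional expectation into a product of two terms that individually vanish.

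For the quadratic-variation formula, the double integral in the statement factors as $\xi^2\,[M]^2(A\cap R_z)\,[N]^1(B\cap R_z)$, with one factor depending only on $t$ and the other only on $s$, so the rectangular increments collapse to $\langle X\rangle((z,z'])=\xi^2\,[M]^2(D_M)\,[N]^1(D_N)$ and $X((z,z'])=\xi\,M(D_M)\,N(D_N)$, where $D_M$ and $D_N$ are the natural $t$- and $s$-increments of $A\cap R_\cdot$ and $B\cap R_\cdot$. By Theorem~\ref{Thm-prop1.6}, verifying that $X^2-\langle X\rangle$ is a weak martingale reduces to
\[\E\bigl[\xi^2\,M(D_M)^2\,N(D_N)^2\mid\cF_z\bigr]=\E\bigl[\xi^2\,[M]^2(D_M)\,[N]^1(D_N)\mid\cF_z\bigr].\]
Each of $M(D_M)^2$ and $[M]^2(D_M)$ lies in $\cF^1_s$, and each of $N(D_N)^2$ and $[N]^1(D_N)$ lies in $\cF^2_t$ (using $s_A'\le s$, $t_B'\le t$ and the $\cF_{uv}$-measurability of $[M]^2_{uv},[N]^1_{uv}$), so property~(4) factors both sides into products of $\cF_z$-conditional expectations, and Theorem~\ref{Thm-prop1.8} identifies $\E[M(D_M)^2\mid\cF_z]=\E[[M]^2(D_M)\mid\cF_z]$ and symmetrically for $N$, closing the identity. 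Taking $z=0$, $z'=z_0$ in the same identity yields the required $L^2$ bound. The main technical obstacle I anticipate is the case bookkeeping in the martingale step: one must pick the correct auxiliary one-parameter filtration ($\cF^1_{s_B}$ or $\cF^2_{t_A}$) through which to pull out $\xi$ together with one of $\tilde M(t')$, $\tilde N(s')$, depending on whether $z$ fails to dominate $z_A\vee z_B$ in the $s$- or $t$-direction.
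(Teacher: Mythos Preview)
Your approach is essentially correct and close in spirit to the paper's, but the paper organizes both steps around an auxiliary conditioning point rather than a case split, and your quadratic-variation argument has a gap that this device repairs.

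For the martingale property, the paper computes the rectangular increment once as $X(D)=\xi\,M(\tilde A)\,N(\tilde B)$ with $\tilde A=A\cap(R_{s't'}\setminus R_{s't})$ and $\tilde B=B\cap(R_{s't'}\setminus R_{st'})$, and shows $\E[X(D)\mid\cF^2_z]=0$ by first conditioning on $\cF^2_{z_{\tilde A}}$, where both $\xi$ and $N(\tilde B)$ are already measurable; the symmetric argument gives $\E[X(D)\mid\cF^1_z]=0$, and the $i$-martingale criterion finishes. This replaces your case split on $z\succ z_A\vee z_B$ and the three-term expansion by a single tower step.

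For the quadratic variation your reduction via Theorem~\ref{Thm-prop1.6} is correct, but the measurability claims ``$M(D_M)^2,[M]^2(D_M)\in\cF^1_s$'' and ``$N(D_N)^2,[N]^1(D_N)\in\cF^2_t$'' rely on $s_A'\le s$ and $t_B'\le t$, and applying property~(4) at $z$ to peel off $\xi^2$ requires $\xi\in\cF_z$, i.e.\ $z\succ z_A\vee z_B=(s_B,t_A)$. None of this is guaranteed for a general $D=(z,z']$. The paper handles all cases uniformly by introducing $z_0:=z_{\tilde A}\vee z_{\tilde B}$ (the join of the lower-left corners of the nonempty $\tilde A,\tilde B$): one always has $z_A\vee z_B\prec z_0$, so $\xi\in\cF_{z_0}$, while $\cF^2_{z_0}=\cF^2_{z_{\tilde A}}$ and $\cF^1_{z_0}=\cF^1_{z_{\tilde B}}$, so property~(4) and Theorem~\ref{Thm-prop1.8} apply at $z_0$; the tower property then carries the identity down to $\cF_z$. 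Your argument is easily repaired by inserting this intermediate conditioning, exactly the maneuver you already use in the martingale step when $z\not\succ z_A\vee z_B$.
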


\begin{proof}
We will follow the proof of \cite[Proposition 2.4]{CW75}. 

For $D = (z,z']$ with $z = (s,t) \prec \prec z' = (s',t')$, the increment of $X$ over $D$ is
\begin{equation}\label{e:increment-X}
X(D) = M({\tilde A})N({\tilde B}),
\end{equation} 
where $\tilde A = A \cap (R_{s't'}\backslash R_{s't})$ and ${\tilde B} = B \cap (R_{s't'}\backslash R_{st'})$.

Suppose $z_{\tilde A}$ is the lower-left corner of $\tilde A$. Then both $\xi$ and $N({\tilde B})$ are $\cF_{z_{\tilde A}}^2$-measurable, 
and hence
\begin{align*}
        \bE \left[ X(D)|\cF_z^2 \right]
	= \bE \left[ \left.\bE\left[\xi M({\tilde A})N({\tilde B})|\cF_{z_{\tilde A}}^2\right]\right|\cF_z^2 \right]= \bE \left[\left.
	 \xi N({\tilde B})\bE\left[M({\tilde A})|\cF_{z_{\tilde A}}^2\right]\right|\cF_z^2 \right]= 0.
\end{align*}
Similarly, one can show $\bE \left[ X(D)|\cF_z^1 \right] = 0$. Hence, $X$ is a martingale.

Let $z_{\tilde B}$ be the lower left-hand corner of $\tilde B$, and denote $z_0=z_{\tilde A}\vee z_{\tilde B}$. Then 
$z_A\vee z_B\prec z_0$, and hence $\xi$ is $\cF_{z_0}$-measurable. Thus, by Theorem \ref{Thm-prop1.6},
\[\bE[X^2(D)|\cF_{z}]=\bE\left[\left.\xi^2\bE\left[ M(\tilde A)^2N(\tilde B)^2|\cF_{z_0}\right] \right|\cF_{z}\right]. \]
 Now we have
\begin{align*}
	\bE \left[ M({\tilde A})^2 N({\tilde B})^2|\cF_{z_0} \right] =& \bE \left[ M({\tilde A})^2|\cF_{z_0} \right] \bE \left[ N({\tilde B})^2|\cF_{z_0} \right] \\
	=& \bE \left[\left. \bE \left[ M({\tilde A})^2 | \cF_{z_0}^2 \right] \right|\cF_{z_0} \right] \bE \left[ \bE\left. \left[ N({\tilde B})^2|\cF_{z_0}^1 \right] \right|\cF_{z_0} \right] \\
	=& \bE \left[  [M]^2({\tilde A})  |\cF_{z_0} \right] \bE \left[ [N]^1({\tilde B}) |\cF_{z_0} \right] \\
	=& \bE \left[ [M]^2({\tilde A})[N]^1({\tilde B}) |\cF_{z_0} \right],
\end{align*}
where the first and the last equalities follow from the assumption that $\cF_{z_0}^1$ and $\cF_{z_0}^2$ are conditionally 
independent given  $\cF_{z_0}$, and the third equality follows from  Theorem \ref{Thm-prop1.8}. Thus
$$\bE[X^2(D) - \xi^2 [M]^2({\tilde A})[N]^1({\tilde B})|\cF_z]=0$$ 
and hence $X_z^2-\langle X\rangle_z$ is a weak martingale where $\langle X\rangle_z$ is given by  \eqref{e:quad-var}. 
The proof is concluded.
\end{proof}

With Proposition \ref{Prop 2.1} in mind, we define a new type of stochastic integral denoted by $\psi\cdot MN$, 
following the approach in \cite{CW75}. 

Fix an integer $n$ and $z_0=(s_0, t_0)\in\bR_+^2$. Divide $R_{z_0}$ into rectangles $\rectangle_{i,j} = (z_{i,j},z_{i+1,j+1}]$, 
where $z_{i,j} = (is_0/2^n,jt_0/2^n)$ for $i, j=0,1, \dots, 2^n-1$.   We first define $\psi \cdot MN$ for an indicator function $\psi.$  
If $i,j,k,l$ are positive integers with $1\le i < k \le 2^n$ and $1\le l < j \le 2^n$, i,e. $\rectangle_{i,j}\curlywedge \rectangle_{k,l} $, 
define the so-called indicator function
\begin{align}\label{eq:psi}
	\psi_{ijkl}(z_1,z_2) = \xi \1_{\rectangle_{i,j}}(z_1) \1_{\rectangle_{k,l}}(z_2),
\end{align}
where $\xi$ is bounded and $\cF_{z_{k,j}}$-measurable, 
and define
\begin{align*}
	(\psi_{ijkl} \cdot MN)_z
	= \xi M(\rectangle_{i,j} \cap R_z) N(\rectangle_{k,l} \cap R_z), \ \ \ \ z \in R_{z_0}.
\end{align*}
Then by Proposition \ref{Prop 2.1}, $\psi_{ijkl} \cdot MN$ is a well-defined  square integrable martingale with quadratic variation 
\begin{align*}
	\langle \psi_{ijkl} \cdot MN \rangle_z
	= \iint_{R_z \times R_z} \psi_{ijkl}^2(z_1,z_2) d[ M]^2_{z_1} d[ N]^1_{z_2}, 
\end{align*}
and thus we have the following isometry
\begin{equation}\label{e:iso}
\E[|\psi_{ijkl}\cdot MN|^2] =\E\left[\iint_{R_z \times R_z} \psi_{ijkl}^2(z_1,z_2) d[ M]^2_{z_1} d[ N]^1_{z_2}\right].
\end{equation}

We shall define $\psi\cdot MN$ for a more general class of integrands $\psi$ following the standard approximation procedure. For this purpose, one needs the isometry  \eqref{e:iso} to hold for finite sum  of indicator functions, and it suffices to prove the following equality
\begin{align}\label{e:qua-var'}
	\langle \psi_{ijkl} \cdot MN, \psi_{mpqr} \cdot MN \rangle_z
	= \iint_{R_z \times R_z} \psi_{ijkl}(z_1,z_2)  \psi_{mpqr}(z_1,z_2) d[M]^2_{z_1} d[ N]^1_{z_2}.
\end{align}
Here,  $\psi_{mpqr}(z_1, z_2)=\eta \1_{\rectangle_{m,p}}(z_1) \1_{\rectangle_{q,r}}(z_2)$ with $m<q\le 2^n, r<p\le 2^n$ and  
 $\eta$  being a bounded $\cF_{z_{q,p}}$-measurable random variable. To prove \eqref{e:qua-var'}, we consider the following more general situation.

Suppose  $M,N,M',N' \in \mathcal  \fM_s^2(z_0)$, and let $(A,B)$ and $(A',B')$ be two pairs of rectangles satisfying the conditions 
in Proposition 2.1, i.e., $A\curlywedge B$ and $A'\curlywedge B'$. Furthermore, we assume $A, A', B, B'$ are from the set 
$\{\rectangle_{i,j}, i,j=0, 1, \dots, 2^n-1\}$. Thus, any two of the rectangles $A, A', B, B'$ are either coincide or disjoint. Denote 
$z_0=(z_A\vee z_B)\vee (z_{A'}\vee z_{B'})$. We claim that the following equality holds
\begin{align} \label{e:increment}
	&\bE \left[ \left. M(A)M'(A')N(B)N'(B') \right| \cF_{z_0} \right]
	+ \bE \left[ \left. M(A')M'(A)N(B)N'(B') \right| \cF_{z_0} \right] \nonumber \\
	&\qquad + \bE \left[ \left. M(A)M'(A')N(B')N'(B) \right| \cF_{z_0} \right]
	+ \bE \left[ \left. M(A')M'(A)N(B')N'(B) \right| \cF_{z_0} \right] \nonumber \\
	&= \bE \left[ \left. \Big( M(A)M'(A') + M(A')M'(A) \Big) \Big( N(B)N'(B') + N(B')N'(B) \Big) \right| \cF_{z_0} \right] \nonumber \\
	&= 4 \bE \Big[ \left. [M,M']^2(A \cap A') [N,N']^1(B \cap B') \right| \cF_{z_0} \Big].
\end{align}

\begin{proof}[\it Proof of (\ref{e:increment})]
The first equality is straightforward. In the following, we shall prove the second equality. 

Recall that the four rectangles $A, A', B, B'$ are either disjoint or coincide; furthermore,  $A\curlywedge B$ and  $A'\curlywedge B'$, 
i.e., $A$ (resp. $A'$) is on the upper left side of $B$ (resp. $B'$).  We prove the second inequality  in \eqref{e:increment} 
by separating the relative locations of $A, A', B',B'$ into four cases. In the following, we denote the lower left corner of a rectangle 
$E$ by $z_E$. 

{\bf Case 1.} If $A$ is on the top of $A'$ and $A\cap A'=\emptyset$, noting that $A$ (resp. $A'$) is to the upper left of $B$ (resp. $B'$),  
we have that $M'(A'), N(B), N'(B')$ are all $\cF_{z_A}^2$-measurable. Since $M$ is a $2$-martingale, we have, noting that 
$\cF_{z_0}\subset \cF_{z_A}^2$
\begin{align*}
	\bE \left[ \left. M(A)M'(A')N(B)N'(B') \right| \cF_{z_0} \right]
	&= \bE \Big[ \left. \bE \left[ \left. M(A)M'(A')N(B)N'(B') \right| \cF_{z_A}^2 \right] \right| \cF_{z_0} \Big] \\
	&= \bE \Big[ \left. M'(A')N(B)N'(B') \bE \left[ \left. M(A) \right| \cF_{z_A}^2 \right] \right| \cF_{z_0} \Big] \\
	&= 0.
\end{align*}
Similarly, for the other terms on the left-hand side of \eqref{e:increment} we also have
\begin{align*}
	&\bE \left[ \left. M(A')M'(A)N(B)N'(B') \right| \cF_{z_0} \right] = 0, \\
	&\bE \left[ \left. M(A)M'(A')N(B')N'(B) \right| \cF_{z_0} \right] = 0, \\
	&\bE \left[ \left. M(A')M'(A)N(B')N'(B) \right| \cF_{z_0} \right] = 0.
\end{align*}
Summing over all the above equalities, we get \eqref{e:increment}.

{\bf Case 2.} If $A'$ is on the top of  $A$ and $A\cap A'=\emptyset$, the proof is the same by considering the $\sigma$-field 
$\cF_{z_{A'}}^2$. If $B$ is to the right (resp. left) of $B'$ with $B\cap B'=\emptyset$, then the proof is also the same by considering 
the $\sigma$-field $\cF_{z_B}^1$ (resp. $\cF_{z_{B'}}^1$).

{\bf Case 3.} Now we only have one situation left: $A$ and $A'$ are  at the same horizontal level, which is on the top of $B$ and 
$B'$, and $B$ and $B'$ are at the same vertical level, which is to the right of $A$ and $A'$. We denote $z_0 := z_A \vee z_B
=z_{A'}\vee z_{B'}$.  Note that $M(A), M'(A'), M(A'), M'(A)$ are $\cF_{z_0}^1$ measurable and $N(B), N'(B'), N(B'), N'(B)$ 
are $\cF_{z_0}^2$ measurable. We have
\begin{align} \label{e:increment'}
	&\quad \bE \left[ \left. \Big( M(A)M'(A') + M(A')M'(A) \Big) \Big( N(B)N'(B') + N(B')N'(B) \Big) \right| \cF_{z_0} \right] \nonumber \\
	&= \bE \Big[ \left. \bE \left[ M(A)M'(A') + M(A')M'(A) \right| \cF_{z_0} \right] \bE \left[ \left. N(B)N'(B') + N(B')N'(B) \right| 
	\cF_{z_0} \right] \Big| \cF_{z_0} \Big],
\end{align}
where the equality follows from the conditional independence of $\cF_{z_0}^1$ and $\cF_{z_0}^2$ given $\cF_{z_0}$.

To compute
\begin{align*}
	\bE \left[ \left. M(A)M'(A') + M(A')M'(A) \right| \cF_{z_0} \right],
\end{align*}
we split it into the following three cases.

(a) If $A = A'$, noting that $\cF_{z_0}^2 = \cF_{z_A}^2$, by Theorem \ref{Thm-prop1.8}, 
	\begin{align} \label{eq-conditional A=A'}
		\bE \left[ \left. M(A) M'(A) \right| \cF_{z_0}^2 \right]
		= \bE \left[ \left. (MM')(A)  \right| \cF_{z_0}^2 \right]
		= \bE \left[ \left. [M,M']^2(A) \right| \cF_{z_0}^2 \right].
	\end{align}

(b) If $A$ and $A'$ are two adjacent disjoint rectangles on the same horizontal level, then $A \cup A'$ is also a rectangle. 
Without loss of generality, we may assume that $A$ is to the left of $A'$, then $z_A$ is also the lower left corner of 
$A \cup A'$. Thus, by Case (a), we have
	\begin{align*}
		&\bE \left[ \left. M(A) M'(A) \right| \cF_{z_0}^2 \right]
		= \bE \left[ \left. [M,M']^2(A) \right| \cF_{z_0}^2 \right], \\
		&\bE \left[ \left. M(A') M'(A') \right| \cF_{z_0}^2 \right]
		= \bE \left[ \left. [M,M']^2(A') \right| \cF_{z_0}^2 \right], \\
		&\bE \left[ \left. M(A \cup A') M'(A \cup A') \right| \cF_{z_0}^2 \right]
		= \bE \left[ \left. [M,M']^2(A \cup A') \right| \cF_{z_0}^2 \right].
	\end{align*}
	Noting that $M(A \cup A') = M(A) + M(A')$, $M'(A \cup A') = M'(A) + M'(A')$ and $[M,M']^i(A \cup A') = [M,M']^i(A) 
	+ [M,M']^i(A')$, we subtract the first two equations from the third one and obtain
	\begin{align} \label{eq-eq-conditional A not= A'}
		\bE \left[ \left. M(A)M'(A') + M(A')M'(A) \right| \cF_{z_0}^2 \right] = 0.
	\end{align}

(c) If $A$ and $A'$ are two non-adjacent rectangles on the same horizontal level, we denote  by $A''$  the rectangle 
between $A$ and $A'$. Note that $A''$ is the union of small rectangles in the set $\{\rectangle_{i, j}, i, j=1, \dots, 2^n\}$. 
By Case (b), we have
	\begin{align*}
		&\bE \left[ \left. M(A)M'(A'') + M(A'')M'(A) \right| \cF_{z_0}^2 \right] = 0, \\
		&\bE \left[ \left. M(A)M'(A'' \cup A') + M(A'' \cup A')M'(A) \right| \cF_{z_0}^2 \right] = 0.
	\end{align*}
Noting that $M'(A'' \cup A') = M'(A'') + M'(A')$, one can subtract the first equality from the second one to obtain 
\eqref{eq-eq-conditional A not= A'}.

Therefore,  summarizing  the three cases (a-c), we can write
\begin{align*}
	\bE \left[ \left. M(A)M'(A') + M(A')M'(A) \right| \cF_{z_0}^2 \right]
	= 2 \bE \left[ \left. [M,M']^2(A \cap A') \right| \cF_{z_0}^2 \right].
\end{align*}
Hence, by taking conditional expectation with respect to the $\sigma$-field $\cF_{z_0}$, we have
\begin{align} \label{eq-quadratic on M A}
	\bE \left[ \left. M(A)M'(A') + M(A')M'(A) \right| \cF_{z_0} \right]
	= 2 \bE \left[ \left. [M,M']^2(A \cap A') \right| \cF_{z_0} \right].
\end{align}

In the same spirit, we can also prove
\begin{align} \label{eq-quadratic on N B}
	\bE \left[ \left. N(B)N'(B') + N(B')N'(B) \right| \cF_{z_0} \right]
	= 2 \bE \left[ \left. [N,N']^1(B \cap B') \right| \cF_{z_0} \right].
\end{align}

Finally, substituting \eqref{eq-quadratic on M A} and \eqref{eq-quadratic on N B} into \eqref{e:increment'}, we have
\begin{align*}
	& \bE \left[ \left. \left( M(A)M'(A') + M(A')M'(A) \right) \left( N(B)N'(B') + N(B')N'(B) \right) \right| \cF_{z_0} \right] \\
	&= 4 \bE \left[ \left. \bE \left[ \left. [M,M']^2(A \cap A') \right| \cF_{z_0} \right] \bE \left[ \left. [N,N']^1(B \cap B') \right| 
	\cF_{z_0} \right] \right| \cF_{z_0} \right] \\
	&= 4 \bE 
	\left[ \left. [M,M']^2(A \cap A') [N,N']^1(B \cap B') \right| \cF_{z_0} \right],
\end{align*}
where the conditional independence of $\cF_{z_0}^1$ and $\cF_{z_0}^2$ given $\cF_{z_0}$ is used again in the last equality. 
This proves \eqref{e:increment}.
\end{proof}

By choosing $M'=M$ and $N'=N$,  eq. \eqref{e:increment} degenerates to
\begin{align} \label{e:increment1}
	\bE \left[ \left. M(A)M(A')N(B)N(B') \right| \cF_{z_0} \right]
	=&  \bE \Big[ \left. [M,M]^2(A \cap A') [N,N]^1(B \cap B') \right| \cF_{z_0} \Big].
\end{align}
%
Now, as in Proposition \ref{Prop 2.1}, we can define 
\begin{equation}\label{e:xx'}
	X_z = \xi M(A\cap R_z) N(B \cap R_z)~ \text{ and }~ X'_z = \xi' M'(A'\cap R_z) N'(B' \cap R_z)
\end{equation}
 for some bounded variables $\xi\in\mathcal F_{z_A\vee z_B}$ and $\xi'\in\mathcal F_{z_{A'}\vee z_{B'}}$. Denote 
 $z_0:=(z_A\vee z_B)\vee (z_{A'}\vee z_{B'})$ and we assume $z_0\prec z=(s,t)$, since otherwise at least one of 
 $X_z$ and $X_z'$ is zero. Let $z'=(s',t')$ be such that $z\prec\prec z'$ and let $D:=(z, z']$. 
	
Assuming $M=M'$ and $N=N'$ in \eqref{e:xx'}, 	following the approach used in the proof of Proposition \ref{Prop 2.1}, 
we can show by \eqref{e:increment},
	\[\E[(XX')(D)|\cF_z] = 	
	\xi\xi' \bE \Big[ \left. [M,M]^2(\tilde A \cap \tilde A') [N,N]^1(\tilde B \cap \tilde B') \right| \cF_{z} \Big], \]
where $\tilde A = A \cap (R_{s't'}\backslash R_{s't})$, ${\tilde B} = B \cap (R_{s't'}\backslash R_{st'})$, and 
$\tilde A' = A' \cap (R_{s't'}\backslash R_{s't})$ and ${\tilde B'} = B' \cap (R_{s't'}\backslash R_{st'})$. This leads to 
\begin{align}\label{e:quad-var1}
	\langle X, X' \rangle_z
	=\xi\xi' \iint_{R_z \times R_z} \1_{A\cap A'}(z_1) \1_{B\cap B'}(z_2) d[M, M]^2_{z_1} d[N, N]^1_{z_2},
\end{align}
and hence \eqref{e:qua-var'} is verified. 

Now we are ready to define $\psi\cdot MN$ for a more general integrand $\psi$.  We say $\psi$ is a simple function if it is a finite sum of $\psi_{ijkl}$ 
given in \eqref{eq:psi}. Let $\mathcal D$ be the $\sigma$-filed on $\bR_+^2\times \bR_+^2\times \Omega$ 
generated by all the simple functions. We call $\mathcal D$ the field of predictable sets. Let $\cL_{MN}^2(z_0)$ 
be the class of all predictable processes such that
\begin{align}
	\bE \left[ \iint_{R_{z_0} \times R_{z_0}} \psi^2(z_1,z_2) d[ M]^2_{z_1} d[ N]^1_{z_2}\right] < \infty.
\end{align}
Then $\cL_{MN}^2(z_0)$ is a Hilbert space with the inner product
\begin{align}\label{e:isometry}
	(\psi, \phi)
	= \bE \left[ \iint_{R_{z_0} \times R_{z_0}} \psi(z_1,z_2) \phi(z_1,z_2) d[ M]^2_{z_1} d[ N]^1_{z_2} \right],
\end{align}
and the simple functions form a dense subset. By \eqref{e:qua-var'} and \eqref{e:isometry}, the mapping 
$\psi\mapsto \psi\cdot MN$ defines an isometry between the set of simple functions and $ \mathfrak M^2(z_0)$.  Then, by a 
standard approximation argument, one can extend the definition of $
\psi\cdot MN$ for each process $\psi\in \cL^2_{MN}(z_0)$.  Furthermore, \eqref{e:qua-var'} also yields for $z\prec z_0$,
\begin{equation}\label{e:qua-var}
\langle \psi\cdot MN, \phi\cdot MN \rangle_{z}
	= \iint_{R_{z} \times R_{z}} \psi(z_1,z_2) \phi(z_1,z_2) d\langle M\rangle_{z_1} d\langle N\rangle_{z_2} , ~~\forall \psi, \phi\in \cL^2_{MN}(z_0). 
\end{equation}

 {\bf Throughout the rest of this section, we only consider continuous strong martingales with finite fourth moments, unless otherwise stated.} 
 Then based on Theorem \ref{Thm-prop1.9}, we have 
\begin{equation}\label{e:quad1=quad2}
[M]^1=[M]^2=\langle M\rangle; ~~[N]^1=[N]^2=\langle N\rangle.
\end{equation}

To end this subsection, we shall follow the approach used in \cite[Section 6]{CW75} to show that $J_{MN}$ defined in \eqref{def-J} 
can be represented by $\psi\cdot MN$ with $\psi(z_1, z_2) =\1_{[z_1{\curlywedge\atop\curlywedge} z_2]}$. 

Recall the notations $z_{i,j} = (is_0/2^n,jt_0/2^n)$ and  $\rectangle_{i,j} = (z_{i,j},z_{i+1,j+1}]$.  We also denote $\epsilon_{i,j} 
= (z_{i0}, z_{i+1,j}]$ and $\delta_{i,j} = (z_{0,j}, z_{i,j+1}]$. Denote
\begin{align}
	J_{MN}^n(z)
	&:= \sum_{i,j=0}^{2^n-1} M( \delta_{i,j} \cap R_z ) N( \epsilon_{i,j} \cap R_z )\notag \\
	&= \sum_{i,j=0}^{2^n-1} \left( \sum_{k=0}^{i-1} M( (z_{k,j},z_{k+1,j+1}] \cap R_z ) \right) 
	\left( \sum_{l=0}^{j-1} N( (z_{i,l},z_{i+1,l+1}] \cap R_z ) \right)\notag \\
	&= \sum_{k<i} \sum_{l<j} \left(\psi_{kjil} \cdot MN\right)_z,\label{e:J^n}
\end{align}
where $\psi_{kjil}$ is given in \eqref{eq:psi}.
Thus, letting $n \rightarrow \infty$, we have
\begin{align*}
	J_{MN}^n(z) \rightarrow (\psi \cdot MN)_z,
\end{align*}
where 
\begin{align*}
	\psi(z_1,z_2) =\1_{[z_1{\curlywedge\atop\curlywedge} z_2]}=	\begin{cases}
	1, &\text{ if } z_1{\curlywedge\atop\curlywedge} z_2, \\
	0, & \mathrm{otherwise}.
	\end{cases}
\end{align*}

Define $M^n = \sum\limits_{i,j=0}^{2^n-1} \1_{\rectangle_{i,j}}(z) M_{z_{i,j}}$. Then $M^n$ is a  sequence of simple 
functions that approximate $M$ and 
\begin{align}
	\int_{R_{z_0}} M^n dN &= \sum_{i,j=0}^{2^n-1} M_{z_{i,j}} N(\rectangle_{i,j}) \nonumber \\
	&= \sum_{i,j=0}^{2^n-1} M_{z_{i,j}} \left( N(\epsilon_{i,j+1}) - N(\epsilon_{i,j}) \right) \nonumber \\
	&= \sum_{i,j=0}^{2^n-1} \left( M_{z_{i,j+1}} N(\epsilon_{i,j+1}) - M_{z_{i,j}} N(\epsilon_{i,j}) \right) + 
	\sum_{i,j=0}^{2^n-1} \left( M_{z_{i,j}} - M_{z_{i,j+1}} \right) N(\epsilon_{i,j+1}) \nonumber \\
	&= \sum_{i=0}^{2^n-1} M_{z_{i,2^n}} N(\epsilon_{i,2^n}) - \sum_{i,j=0}^{2^n-1} M(\delta_{i,j}) \left( N(\epsilon_{i,j}) 
	+ N(\rectangle_{i,j}) \right).\label{e:int-MdN}
\end{align}
If we define $\widetilde{M}_{s,t_0}^n = M_{is_0/2^n,t_0}$ for $s \in (is_0/2^n, (i+1)s_0/2^n]$, and $\delta^n(z) = M(\delta_{i,j})$ 
if $z \in \rectangle_{i,j}$. Let $H_{z_0}$ be the line segment with endpoints $(0,t_0)$ and $z_0 = (s_0,t_0)$, then
\begin{align}\label{e:0}
	\int_{R_{z_0}} M^n dN
	= \int_{H_{z_0}} \widetilde{M}_{s,t_0}^n(s)N(ds,t)
	- J_{MN}^n(z_0) - \int_{R_{z_0}} \delta^n dN.
\end{align}
By the Cauchy-Schwarz inequality,
\begin{align}
	\bE \left[ \bigg( \int_{R_{z_0}} \delta^n dN \bigg)^2 \right]
	&= \bE \left[ \int_{R_{z_0}} \left( \delta^n(s,t) \right)^2 d \langle N \rangle_{st} \right] \notag\\
	&\le \bE \left[ \int_{R_{z_0}} \sup_{i,j} M(\delta_{i,j})^2 d \langle N \rangle_{st} \right] \notag\\
	&\le \bE \left[ \sup_{i,j} M(\delta_{i,j})^2 \langle N \rangle_{z_0} \right] \notag\\
	&\le \left( \bE \left[ \sup_{i,j} M(\delta_{i,j})^4 \right] \bE \left[ \langle N \rangle_{z_0}^2 \right] \right)^{1/2}\notag \\
	&\to 0, ~ n\to \infty,\label{e:1}
\end{align}
where the last step holds due to the continuity of $M$ and the dominated convergence theorem, noting that  
$\bE[\sup_{i,j,n} M(\delta_{i,j})^4]$ is dominated by $\bE[\sup_{z\prec z_0} |M_z|^4]$, which is dominated by 
$(4/3)^8\bE[ |M_{z_0}|^4]$ due to Theorem \ref{Thm-BDG 2dim} and the existence of the fourth moment of the $M$.

Furthermore, Theorem \ref{Thm-BDG 2dim} yields 
\begin{align*}
	\bE \left[ \sup_n\sup_{z\prec z_0} (M_z^n-M_z)^4 \right]
	\le 8\bE \left[ \sup_n\sup_{z\prec z_0} |M_z^n|^4 + \sup_{z\prec z_0} |M_z|^4 \right]
	\le 16 \bE \left[ \sup_{z\prec z_0} |M_z|^4 \right]
	< \infty.
\end{align*}
By the Cauchy-Schwarz inequality, the dominated convergence theorem and the continuity of $M$, we have
\begin{align}
	\bE \left[ \bigg( \int_{R_{z_0}} (M^n-M) dN \bigg)^2 \right]
	&= \bE \left[ \int_{R_{z_0}} (M^n-M)^2 d \langle N \rangle \right] \notag\\
	&\le \bE \left[ \sup_{n,z} (M_z^n-M_z)^2 \langle N \rangle_{z_0} \right] \notag\\
	&\le \left( \bE \left[ \sup_{n,z} (M_z^n-M_z)^4 \right] \bE \left[ \langle N \rangle_{z_0}^2 \right] \right)^{1/2} \notag\\
	&\rightarrow 0, ~n \rightarrow \infty.\label{e:2}
\end{align}
Similarly, we can show the following $L^2$-convergence,
\begin{align}\label{e:3}
	\int_{H_{z_0}} \widetilde{M}_{s,t_0}^n N(ds,t)
	\rightarrow \int_{H_{z_0}} M(s,t) N(ds,t),~ n \rightarrow \infty.
\end{align}
Recalling that $\lim_{n\to\infty}J_{MN}^n(z) =(\psi \cdot MN)_z$ with $\psi(z_1,z_2) =\1_{[z_1{\curlywedge\atop\curlywedge} z_2]}$, 
by \eqref{e:0}, \eqref{e:1}, \eqref{e:2} and \eqref{e:3}, we have
\begin{align*}
	(\psi \cdot MN)_{z_0} = \int_{H_{z_0}} M(s,t) N(ds,t) - \int_{R_{z_0}} M dN, 
\end{align*}
and hence by the definition \eqref{def-J} of $J_{MN}$, we have
\begin{equation}\label{e:J}
J_{MN}(z_0)=(\psi \cdot MN)_{z_0}.
\end{equation}

Therefore, we can calculate $\langle J_{MN}\rangle$ by \eqref{e:qua-var}, 
\begin{align*}
\langle J_{MN}\rangle_z= \iint_{R_{z} \times R_{z}} \1_{[z_1{\curlywedge \atop \curlywedge} z_2]} d\langle M\rangle_{z_1} 
d\langle N\rangle_{z_2}=\int_{R_z} d_t\langle M\rangle_{st} d_s\langle N \rangle_{st},
\end{align*}
and hence 
\begin{equation}\label{e:quad-var-J}
d\langle J_{MN}\rangle_{st} = d_t\langle M\rangle_{st} d_s\langle N \rangle_{st}. 
\end{equation}

Furthermore, the following equality holds, 
\begin{align}\label{def-J'}
	J_{MN}(s_0,t_0)
	=& \int_0^{s_0} M(s,t_0) N(ds,t_0) - \int_{R_{s_0t_0}} M(s,t) dN(s,t)\notag\\
	=&  \int_0^{t_0} N(s_0,t) M(s_0,dt) - \int_{R_{s_0t_0}} N(s,t) dM(s,t).
\end{align}
This can be deduced by rewriting \eqref{e:int-MdN} as follows
\begin{align*}
	\int_{R_{z_0}} M^n dN
	&= \sum_{i,j=0}^{2^n-1} M_{z_{i,j}} N(\rectangle_{i,j}) \nonumber \\
	&= \sum_{i,j=0}^{2^n-1} M_{z_{i,j}} \left( N(\delta_{i+1,j}) - N(\delta_{i,j}) \right) \nonumber \\
	&= \sum_{i,j=0}^{2^n-1} \left( M_{z_{i+1,j}} N(\delta_{i+1,j}) - M_{z_{i,j}} N(\delta_{i,j}) \right) + 
	\sum_{i,j=0}^{2^n-1} \left( M_{z_{i,j}} - M_{z_{i+1,j}} \right) N(\delta_{i+1,j}) \nonumber \\
	&= \sum_{j=0}^{2^n-1} M_{z_{2^n, j}} N(\delta_{2^n,j}) - \sum_{i,j=0}^{2^n-1} M(\epsilon_{i,j}) \left( N(\delta_{i,j}) + N(\rectangle_{i,j}) \right).
\end{align*}
By letting $n$ go to infinity, we get for $\psi(z_1, z_2)=\1_{[z_1{\curlywedge \atop \curlywedge} z_2]}$,  
\[(\psi\cdot NM)_{z_0}= \int_0^{t_0} M(s_0, t) N(s_0, dt)-\int_{R_{z_0}} MdN.\]
This together with \eqref{e:J} implies \eqref{def-J'}.

\subsection{Multi-dimensional Green's formula for martingales on the plane}

Now we are ready to prove Theorem \ref{Thm-Green formula}, the multi-dimensional Green's formula  on the plane. 
Let $\{M^{(i)}(s,t), (s,t)\in\bR_+^2\}_{1 \le i \le d}$ be a family of independent continuous strong martingales on $\bR_+^2$ 
with finite fourth moment. We assume that the increasing process $\langle M^{(i)} \rangle$ is deterministic for  every$1\le i \le d$. 
Let $F_j = F_j(s,t), 1 \le j \le d$ be a sequence of predictable processes of the form, 
\begin{align} \label{eq-stochastic partial derivative}
	F_j(s,t) = F_j(s,0) + \sum_{i=1}^d \int_0^t f_{j,i}(s,r) M^{(i)}(s,dr) + \int_0^t f_{j,0} (s,r) dr,
\end{align}
where $f_{j,i}, 1\le j \le d, 0\le i\le d$ are $\cF$-predictable processes. 

\begin{theorem} \label{Thm-Green formula}
Fix $s_0, t_0 > 0$. Suppose that $\{F_j(s,t)\}_{1 \le j \le d}$ are predictable processes given by \eqref{eq-stochastic partial derivative}. 
Assume
\begin{align}\label{e:c1}
	\bE \left[ \int_0^{s_0} \int_0^{t_0} f_{j,i}(s,t)^2  d_t\big\langle M^{(i)} \big\rangle_{s_0 t} d_s\big\langle M^{(j)} \big\rangle_{st_0} \right] 
	< \infty, \qquad \forall 1 \le i, j \le d,
\end{align}
and
\begin{align}\label{e:c2}
	\bE \left[ \int_0^{t_0} \int_0^{s_0} f_{j,0}(s,t)^2 d_s \big\langle M^{(i)}\big\rangle_{st_0} dt \right] < \infty, \qquad \forall 1 \le j \le d.
\end{align}
Then for any rectangle $A \subseteq R_{s_0 t_0}$, we have
\begin{align} \label{eq-Thm1}
	\sum_{j=1}^d \int_{\partial A} F_j(s,t) M^{(j)}(ds,t) &= \sum_{j=1}^d \int_A F_j(s,t) dM^{(j)}(s,t) 
	+ \sum_{i,j=1}^d \int_A f_{j,i}(s,t) dJ_{M^{(i)},M^{(j)}}(s,t) \nonumber \\
	& \qquad  + \sum_{j=1}^d \int_A f_{j,0}(s,t) M^{(j)} (ds,t) dt.
\end{align}
\end{theorem}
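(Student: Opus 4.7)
Plan: I will reduce to the case $A = R_{s_0 t_0}$, expand the left-hand side using the representation \eqref{eq-stochastic partial derivative} of $F_j$, and convert each of the three resulting boundary terms to integrals over $R_{s_0 t_0}$ by means of the relation \eqref{def-J'} between $J_{M^{(i)}, M^{(j)}}$ and the planar stochastic integrals.

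Every integral in \eqref{eq-Thm1} is additive as a signed set function over the rectangle $A$ (with the convention that $\int_{\partial A} F_j M^{(j)}(ds, t)$ picks up only the top-minus-bottom horizontal edges of $A$, since $ds = 0$ on the vertical edges). Hence a general rectangle $A = (s_1, s_2] \times (t_1, t_2]$ can be written as the signed combination $R_{s_2 t_2} - R_{s_1 t_2} - R_{s_2 t_1} + R_{s_1 t_1}$, and it suffices to prove \eqref{eq-Thm1} for $A = R_{s_0 t_0}$. Since $M^{(j)}$ vanishes on the axes, the boundary integral in that case collapses to $\sum_j \int_0^{s_0} F_j(s, t_0) M^{(j)}(ds, t_0)$.

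Substituting \eqref{eq-stochastic partial derivative} into the boundary integral produces three pieces, which I handle as follows. \emph{(a) Initial term.} Since $F_j(s, 0)$ is $t$-independent and $M^{(j)}$ vanishes on the axes, $\int_0^{t_0} dM^{(j)}(s, t) = M^{(j)}(s, t_0)$, so $\int_0^{s_0} F_j(s, 0) M^{(j)}(ds, t_0) = \int_{R_{s_0 t_0}} F_j(s, 0) \, dM^{(j)}(s, t)$. \emph{(b) Lebesgue piece.} Writing $M^{(j)}(ds, t_0) = M^{(j)}(ds, r) + \int_r^{t_0} M^{(j)}(ds, du)$ and exchanging orders of integration, the piece $\int_0^{s_0} \bigl(\int_0^{t_0} f_{j,0}(s, r) dr\bigr) M^{(j)}(ds, t_0)$ splits into $\int_{R_{s_0 t_0}} f_{j,0}(s, t) M^{(j)}(ds, t) dt$ plus $\int_{R_{s_0 t_0}} \bigl(\int_0^t f_{j,0}(s, r) dr\bigr) dM^{(j)}(s, t)$. \emph{(c) Martingale piece.} The same decomposition of $M^{(j)}(ds, t_0)$ applied to $\int_0^{s_0} \bigl(\int_0^{t_0} f_{j,i}(s, r) M^{(i)}(s, dr)\bigr) M^{(j)}(ds, t_0)$ isolates a ``diagonal'' contribution that, through the heuristic $dJ_{M^{(i)}, M^{(j)}}(s, r) = M^{(i)}(s, dr) M^{(j)}(ds, r)$ made rigorous in Section~\ref{sec:J-MN}, equals $\int_{R_{s_0 t_0}} f_{j,i}(s, t) \, dJ_{M^{(i)}, M^{(j)}}(s, t)$; the remaining ``off-diagonal'' term gives $\int_{R_{s_0 t_0}} \bigl(\int_0^t f_{j,i}(s, r) M^{(i)}(s, dr)\bigr) dM^{(j)}(s, t)$.

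Summing over $i, j$ and recombining the 2D stochastic integral remainders from (a)--(c) into $\sum_j \int_{R_{s_0 t_0}} F_j(s, t) \, dM^{(j)}(s, t)$ via \eqref{eq-stochastic partial derivative}, one recovers the right-hand side of \eqref{eq-Thm1}. The main technical obstacle is the rigorous justification of step (c), where the two stochastic differentials $M^{(i)}(s, dr)$ and $M^{(j)}(ds, r)$ act in transverse directions. This is exactly the regime for which the $\psi \cdot MN$-calculus of Section~\ref{sec:J-MN} was built: I approximate $f_{j,i}$ by simple predictable functions of the form \eqref{eq:psi}, use the $L^2$-isometry induced by the inner product \eqref{e:isometry} together with the integrability hypotheses \eqref{e:c1}--\eqref{e:c2} to pass to the limit, and invoke the identifications \eqref{e:J} and \eqref{def-J'} to recognize the limit as the sum of the two claimed summands. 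The exchange in step (b) is simpler and reduces to a classical single-martingale stochastic Fubini argument.
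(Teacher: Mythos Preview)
Your proposal is essentially correct and follows the same skeleton as the paper's proof: verify the identity for simple $f_{j,i}$, then pass to the $L^2$-limit using the isometry for the $J$-integral together with conditions \eqref{e:c1}--\eqref{e:c2}. The organizational choices differ slightly. You reduce to $A=R_{s_0t_0}$ by inclusion--exclusion over corner rectangles and then expand $F_j$ into its three constituents (a)--(c); the paper instead keeps a general $A$, reduces to the case $F_j(\cdot,t_1)=0$ on the bottom edge, and verifies \eqref{eq-Thm1} first for \emph{constant} $f_{j,i}\in\cF_{s_1,t_1}$ by a direct computation (their \eqref{eq-2.19}--\eqref{eq-2.20}) that is exactly your heuristic split in (b)--(c) written out explicitly. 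Either route lands on the same simple-function identity and the same limiting argument.

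Two small points. First, your reference to ``simple predictable functions of the form \eqref{eq:psi}'' is a slip: \eqref{eq:psi} is a two-variable indicator used to build $\psi\cdot MN$, whereas what you actually need (and what the paper uses) are one-variable simple predictable approximants $f_{j,i}^{(n)}$ for $f_{j,i}$; the $J$-integral $\int_A f\,dJ_{M^{(i)},M^{(j)}}$ is an ordinary stochastic integral against the martingale $J$, controlled via \eqref{e:quad-var-J}. Second, your reduction to corner rectangles $R_{s_0't_0'}$ is legitimate because the increasing-process property of $\langle M^{(i)}\rangle$ makes $d_t\langle M^{(i)}\rangle_{st}$ monotone in $s$ (and similarly in the other variable), so \eqref{e:c1}--\eqref{e:c2} at $(s_0,t_0)$ dominate the corresponding conditions at any smaller corner.
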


\begin{proof} 
We will follow the argument in the proof of \cite[Theorem 6.1]{CW75}.  Let $A = [s_1, s_2] \times [t_1, t_2]\subset [0,s_0]\times[0,t_0]$. 
Without loss of generality, we may assume that $F_j = 0$ on the line segment with endpoints $(s_1,t_1)$ and $(s_2,t_1)$. 
Indeed, noting $F_j(s,t)= F_j(s, t_1) + (F_j(s,t)-F_j(s, t_1)),$  it follows from 
\begin{align*}
	\int_A F_j(s,t_1) dM^{(j)}(s,t)
	= \int_{s_1}^{s_2} F_j(s,t_1) \left( M^{(j)}(ds,t_2) - M^{(j)}(ds,t_1) \right)
	= \int_{\partial A} F_j(s,t_1) M^{(j)}(ds,t), 
\end{align*}
that \eqref{eq-Thm1} holds for $F_j(s,t)$ if and only if 
it holds for $F_j(s,t) - F_j(s,t_1)$.

Next, we consider the case that each stochastic partial derivative $f_{j,i}(s,t)\equiv f_{j,i}\in \cF_{s_1, t_1}$ is a constant function  
for $1 \le j \le d, 0 \le i \le d$. Then by \eqref{eq-stochastic partial derivative}, we have
\begin{align} \label{eq-2.18}
	F_j(s,t) = \sum_{i=1}^d f_{j,i} \left( M^{(i)}(s,t) - M^{(i)}(s,t_1) \right) + f_{j,0}(t-t_1), (s,t) \in A, 1 \le j \le d.
\end{align}
On one hand, noting that $J_{MN}(A)=J_{MN}(s_2,t_2)-J_{MN}(s_1,t_2)-J_{MN}(s_2,t_1)+J_{MN}(s_1,t_1)$, it follows from  \eqref{def-J} that
\begin{align} \label{eq-2.19}
	& \int_A f_{j,i}(s,t) dJ_{M^{(i)},M^{(j)}}(s,t) \nonumber \\
	&= \int_{\partial A} f_{j,i} M^{(i)}(s,t) M^{(j)}(ds,t) - \int_A f_{j,i} M^{(i)}(s,t) dM^{(j)}(s,t) \nonumber \\
	&=\int_{\partial A} f_{j,i} \left( M^{(i)}(s,t) - M^{(i)}(s,t_1) \right) M^{(j)}(ds,t) - \int_A f_{j,i} \left( M^{(i)}(s,t) - M^{(i)}(s,t_1) \right) dM^{(j)}(s,t).
\end{align}
Here $\int_{\partial A}$ is a line integral on $\partial A$ with clockwise as its positive direction.

On the other hand, It\^{o}'s formula yields
\begin{align} \label{eq-2.20}
	& \int_A f_{j,0} (t-t_1) dM^{(j)}(s,t) \nonumber \\
	&= f_{j,0} \int_{t_1}^{t_2} (t-t_1) \left( M^{(j)}(s_2,dt) - M^{(j)}(s_1,dt) \right) \nonumber \\
	&= f_{j,0} (t_2-t_1) \left( M^{(j)}(s_2,t_2) - M^{(j)}(s_1,t_2) \right) - f_{j,0} \int_{t_1}^{t_2} \left( M^{(j)}(s_2,t) - M^{(j)}(s_1,t) \right) dt \nonumber \\
	&= \int_{\partial A} f_{j,0} (t-t_1) M^{(j)}(ds,t) - \int_{t_1}^{t_2} \left( \int_{s_1}^{s_2} f_{j,0} M^{(j)}(ds,t) \right) dt.
\end{align}
By \eqref{eq-2.18}, \eqref{eq-2.19} and \eqref{eq-2.20}, we get \eqref{eq-Thm1}. Thus, we have proved the theorem 
for the case that all stochastic partial derivatives are constant functions. Note that for $A=\cup_{i=1}^k A_i$ where
 $A_i$ are disjoint rectangles, one has $\int_{\partial A}=\sum_{i=1}^k \int_{\partial A_i} $ and $\int_A=\sum_{i=1}^k \int_{A_i}$. 
 Therefore,  \eqref{eq-Thm1} also holds for the case that all stochastic partial derivatives are simple functions.

For the general case, recall that the martingales $\{M^{(i)}\}_{1 \le i \le d}$ are independent and the 
increasing processes $\{\left\langle M^{(i)} \right\rangle\}_{1 \le i \le d}$ are deterministic. By \eqref{e:c1} and \eqref{e:c2},   for $0 \le i \le d$, $1 \le j \le d$, 
we can find sequences $\{f_{j,i}^{(n)}\}_{n \in \bN}$ of bounded simple functions such that as $n\to \infty$,
\begin{align} \label{eq-condition on stochastic derivative 1}
	\int_0^{s_0} \int_0^{t_0} \bE \left[ \left( f_{j,i}^{(n)}(s,t) - f_{j,i}(s,t) \right)^2 \right] d_t\big\langle M^{(i)} \big\rangle_{s_0t}d_s 
	\big\langle M^{(j)} \big\rangle_{st_0} \rightarrow 0, ~1 \le i,j \le d,
\end{align}
and
\begin{align} \label{eq-condition on stochastic derivative 2}
	\int_0^{s_0} \int_0^{t_0} \bE \left[ \left( f_{j,0}^{(n)}(s,t) - f_{j,0}(s,t) \right)^2 \right] dt d_s \big\langle M^{(j)}\big\rangle_{st_0} 
	\rightarrow 0, \qquad 1 \le j \le d.
\end{align}
Define
\begin{align*}
	F_j^{(n)}(s,t) = \sum_{i=1}^d \int_0^t f_{j,i}^{(n)}(s,r) M^{(i)}(s,dr) + \int_0^t f_{j,0}^{(n)} (s,r) dr, ~1 \le j \le d.
\end{align*}
Then \eqref{eq-Thm1} holds for the family $\big\{ F_j^{(n)} \big\}_{1 \le j \le d}$, and it remains to take the limit as $n \rightarrow \infty$.

We deal with the left-hand side of \eqref{eq-Thm1} first. It follows from \eqref{eq-condition on stochastic derivative 1} that, 
as $n \rightarrow \infty$, for $1 \le i, j \le d$
\begin{align} \label{eq-2.23}
	& \bE \left[ \left( \int_{\partial A} \int_0^t f_{j,i}(s,r) M^{(i)}(s,dr) M^{(j)}(ds,t) - \int_{\partial A} \int_0^t f_{j,i}^{(n)}(s,r) M^{(i)}(s,dr) M^{(j)}(ds,t) \right)^2 \right] \nonumber \\
	&\le 2\sum_{k=1,2} \bE \left[ \left( \int_{s_1}^{s_2} \int_0^{t_k} \left( f_{j,i}(s,r) - f_{j,i}^{(n)}(s,r) \right) M^{(i)}(s,dr) M^{(j)}(ds,t_k) \right)^2 \right] \nonumber \\
	&= 2\sum_{k=1,2} \int_{s_1}^{s_2} \int_0^{t_k} \bE \left[ \left( f_{j,i}(s,r) - f_{j,i}^{(n)}(s,r) \right)^2 \right] d_r\big \langle M^{(i)} \big\rangle_{sr} d_s \big\langle M^{(j)} \big\rangle_{st_k} \nonumber \\
	&\le  2\sum_{k=1,2} \int_{s_1}^{s_2} \int_0^{t_k} \bE \left[ \left( f_{j,i}(s,r) - f_{j,i}^{(n)}(s,r) \right)^2 \right] d_r\big\langle M^{(i)} \big\rangle_{s_0r} d_s \big\langle M^{(j)} \big\rangle_{st_0} \nonumber \\
	&\rightarrow 0.
\end{align}

Similarly, by \eqref{eq-condition on stochastic derivative 2}, we have as $n\to\infty$, for $1\le  j\le d$,
\begin{align} \label{eq-2.24}
	&\bE \left[ \left( \int_{\partial A} \int_0^t f_{j,0}(s,r) dr M^{(j)}(ds,t) - \int_{\partial A} \int_0^t f_{j,0}^{(n)}(s,r) dr M^{(j)}(ds,t) \right)^2 \right] \nonumber \\
		&\le 2\sum_{k=1,2} \int_{s_1}^{s_2} \bE \left[ \left( \int_0^{t_k} \left( f_{j,0}(s,r) - f_{j,0}^{(n)}(s,r) \right) dr \right)^2 \right] d_s\big\langle M^{(j)} \big\rangle_{st_0} \nonumber \\
	&\le 4 \int_{s_1}^{s_2} t_0 \int_0^{t_0} \bE \left[ \left( f_{j,0}(s,r) - f_{j,0}^{(n)}(s,r) \right)^2 \right] dr d_s\big\langle M^{(j)} \big\rangle_{st_0} \nonumber \\
	&\rightarrow 0.
\end{align}
Hence, combing  \eqref{eq-2.23} with \eqref{eq-2.24}, we get 
\begin{align*}
	\lim_{n \rightarrow \infty} \sum_{j=1}^d \int_{\partial A} F_j^{(n)}(s,t) M^{(j)}(ds,t) = \sum_{j=1}^d \int_{\partial A} F_j(s,t) M^{(j)}(ds,t)
\end{align*}
in $L^2(\Omega)$.

Next, we deal with the first term on the right-hand side of \eqref{eq-Thm1}. By \eqref{eq-condition on stochastic derivative 1},
\begin{align} \label{eq-2.25}
	& \bE \left[ \left( \int_A \int_0^t f_{j,i}(s,r) M^{(i)}(s,dr) dM^{(j)}(s,t) - \int_A \int_0^t f_{j,i}^{(n)}(s,r) M^{(i)}(s,dr) dM^{(j)}(s,t) \right)^2 \right] \nonumber \\
	&= \int_A \bE \left[ \left( \int_0^t \left( f_{j,i}(s,r) - f_{j,i}^{(n)}(s,r) \right) M^{(i)}(s,dr) \right)^2 \right] d\big\langle M^{(j)} \big\rangle_{st} \nonumber \\
	&= \int_A \int_0^t \bE \left[ \left( f_{j,i}(s,r) - f_{j,i}^{(n)}(s,r) \right)^2 \right] d_r\big\langle M^{(i)}\big\rangle_{sr} d\big\langle M^{(j)} \big\rangle_{st} \nonumber \\
	&\le \int_A \int_0^{t_0} \bE \left[ \left( f_{j,i}(s,r) - f_{j,i}^{(n)}(s,r) \right)^2 \right] d_r\big\langle M^{(i)}\big\rangle_{sr} d\big \langle M^{(j)} \big\rangle_{st} \nonumber \\
	&\le \int_{0}^{s_0} \int_0^{t_0} \bE \left[ \left( f_{j,i}(s,r) - f_{j,i}^{(n)}(s,r) \right)^2 \right] d_r\big \langle M^{(i)}\big\rangle_{s_0r} d_s\big\langle M^{(j)} \big\rangle_{st_0} \nonumber \\
	&\rightarrow 0, \quad n \rightarrow \infty,\ \ \ \forall 1 \le i, j \le d.
\end{align}
Similarly, by \eqref{eq-condition on stochastic derivative 2},
\begin{align} \label{eq-2.26}
	&\bE \left[ \left( \int_A \int_0^t f_{j,0}(s,r) dr dM^{(j)}(s,t) - \int_A \int_0^t f_{j,0}^{(n)}(s,r) dr dM^{(j)}(s,t) \right)^2 \right] \nonumber \\
	&\le \int_A t_0 \int_0^{t_0} \bE \left[ \left( f_{j,0}(s,r) - f_{j,0}^{(n)}(s,r) \right)^2 \right] dr d\big \langle M^{(j)} \big\rangle_{st} \nonumber \\
	&\le t_0 \int_{0}^{s_0}  \int_0^{t_0} \bE \left[ \left( f_{j,0}(s,r) - f_{j,0}^{(n)}(s,r) \right)^2 \right] dr d_s \big\langle M^{(j)} \big\rangle_{st_0}\nonumber \\
	&\rightarrow 0, \quad n \rightarrow \infty,\ \forall 1 \le j \le d.
\end{align}
Hence, \eqref{eq-2.25} and  \eqref{eq-2.26} imply
\begin{align*}
	\lim_{n \rightarrow \infty} \sum_{j=1}^d \int_A F_j^{(n)}(s,t) M^{(j)}(ds,t) \rightarrow \sum_{j=1}^d \int_A F_j(s,t) M^{(j)}(ds,t)
\end{align*}
in $L^2(\Omega)$.

Next we deal with the limit of the second term on the right hand side of \eqref{eq-Thm1}. By \eqref{e:quad-var-J} and 
\eqref{eq-condition on stochastic derivative 1}, we have
\begin{align} \label{eq-2.27}
	 &\bE \left[ \left( \int_A f_{j,i}(s,t) dJ_{M^{(i)},M^{(j)}}(s,t) - \int_A f_{j,i}^{(n)}(s,t) dJ_{M^{(i)},M^{(j)}}(s,t) \right)^2 \right] \nonumber \\
	&\le  \bE \left[ \int_A \left| f_{j,i}(s,t) - f_{j,i}^{(n)}(s,t) \right|^2 d_t \big\langle M^{(i)} \big\rangle_{s_0t} d_s \big\langle M^{(j)}\big\rangle_{st_0} \right] \nonumber \\
	&\rightarrow 0, \quad n \rightarrow \infty, \ ~\forall 1 \le i, j \le d.
\end{align}
Hence, we have
\begin{align*}
	\sum_{i,j=1}^d \int_A f_{j,i}^{(n)}(s,t) dJ_{M^{(i)},M^{(j)}}(s,t) \rightarrow \sum_{i,j=1}^d \int_A f_{j,i}(s,t) dJ_{M^{(i)},M^{(j)}}(s,t)
\end{align*}
in $L^2(\Omega)$.

Lastly, we deal with the limit of the third term on the right hand side of \eqref{eq-Thm1}. By the Cauchy-Schwarz inequality 
and \eqref{eq-condition on stochastic derivative 2},
\begin{align} \label{eq-2.29}
	& \bE \left[ \left( \int_A f_{j,0}(s,t) M^{(j)} (ds,t) dt - \int_A f_{j,0}^{(n)}(s,t) M^{(j)} (ds,t) dt \right)^2 \right] \nonumber \\
	&= \bE \left[ \left( \int_{t_1}^{t_2} \int_{s_1}^{s_2} \left( f_{j,0}(s,t) - f_{j,0}^{(n)}(s,t) \right) M^{(j)} (ds,t) dt \right)^2 \right] \nonumber \\
	&\le \left( t_2-t_1 \right) \int_{T}^{t_2} \bE \left[ \left( \int_{S}^{s_2} \left( f_{j,0}(s,t) - f_{j,0}^{(n)}(s,t) \right) M^{(j)} (ds,t) \right)^2 \right] dt \nonumber \\
	&= \left( t_2-t_1 \right) \int_{T}^{t_2} \int_{S}^{s_2} \bE \left[ \left( f_{j,0}(s,t) - f_{j,0}^{(n)}(s,t) \right)^2 \right] \big\langle M^{(j)} (ds,t) \big\rangle dt \nonumber \\
	&\rightarrow 0, \quad n \rightarrow \infty,\  \forall 1 \le j \le d.
\end{align}
Thus, we have the following convergence in $L^2(\Omega)$,
\begin{align*}
	\sum_{j=1}^d \int_A f_{j,0}^{(n)}(s,t) M^{(j)} (ds,t) dt
	\rightarrow \sum_{j=1}^d \int_A f_{j,0}(s,t) M^{(j)} (ds,t) dt.
\end{align*}
 The proof is concluded.
\end{proof}

Similarly, for predictable processes of the form
\begin{align}\label{e:G}
	F_j(s,t) = F_j(0,t) + \sum_{i=1}^d \int_0^s f_{j,i}(r,t) M^{(i)}(dr,t) + \int_0^s f_{j,0} (r,t) dr, \ \ 1 \le j \le d,
\end{align}
where $f_{j,i}, \,1\le j\le d, \, 0 \le i \le d$ are $\cF$-predictable processes, 
we have the following Green's formula. 

\begin{theorem} \label{Coro-Green formula} Fix $s_0, t_0 > 0$. Suppose that $\{F_j(s,t)\}_{1\le j\le d}$ are predictable processes 
given by \eqref{e:G}. Assume 
\begin{align*}
	\bE \left[ \int_0^{s_0} \int_0^{t_0} f_{j,i}(s,t)^2 d_s\big\langle M^{(i)} \big\rangle_{st_0}d_t \big\langle M^{(j)}\big\rangle_{st_0} \right] 
	< \infty,\quad  \forall 1 \le i, j \le d,
\end{align*}
and
\begin{align*}
	\bE \left[ \int_0^{s_0} \int_0^{t_0} f_{j,0}(s,t)^2 d_t\big\langle M^{(i)} \big\rangle_{s_0 t} ds \right] < \infty, \quad  \forall 1 \le j \le d.
\end{align*}
Then for any rectangle $A \subseteq R_{s_0 t_0}$, we have
\begin{align*}
	\sum_{j=1}^d \int_{\partial A} F_j(s,t) M^{(j)}(s,dt) &= \sum_{j=1}^d \int_A F_j(s,t) dM^{(j)}(s,t) + 
	\sum_{i,j=1}^d \int_A f_{j,i}(s,t) dJ_{M^{(j)},M^{(i)}}(s,t)\nonumber \\
	&\qquad \qquad + \sum_{j=1}^d \int_A f_{j,0}(s,t) M^{(j)} (s,dt) ds.
\end{align*}
\end{theorem}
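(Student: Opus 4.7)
The plan is to mirror the proof of Theorem \ref{Thm-Green formula} with the two coordinates interchanged, using the symmetric representation
\[
J_{MN}(s_0,t_0) = \int_0^{t_0} N(s_0,t) M(s_0,dt) - \int_{R_{s_0t_0}} N(s,t)\,dM(s,t)
\]
from \eqref{def-J'} as the key enabler. Fixing a rectangle $A = [s_1,s_2]\times[t_1,t_2]$, I would first reduce to the case $F_j(s_1,t) = 0$ for $t\in[t_1,t_2]$. Indeed, the decomposition $F_j(s,t) = F_j(s_1,t) + (F_j(s,t)-F_j(s_1,t))$ together with
\[
\int_A F_j(s_1,t)\,dM^{(j)}(s,t) = \int_{t_1}^{t_2} F_j(s_1,t)\bigl(M^{(j)}(s_2,dt) - M^{(j)}(s_1,dt)\bigr) = \int_{\partial A} F_j(s_1,t) M^{(j)}(s,dt)
\]
shows that the desired identity for $F_j$ is equivalent to the identity for $F_j - F_j(s_1,\cdot)$, whose expansion via \eqref{e:G} has no ``initial'' term.

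Second, I would handle the case in which every $f_{j,i}$ is a constant $\cF_{s_1,t_1}$-measurable random variable, so that
\[
F_j(s,t) = \sum_{i=1}^d f_{j,i}\bigl(M^{(i)}(s,t) - M^{(i)}(s_1,t)\bigr) + f_{j,0}(s-s_1).
\]
Applying \eqref{def-J'} in the form $J_{M^{(j)},M^{(i)}}(A) = \int_{\partial A} M^{(i)}(s,t) M^{(j)}(s,dt) - \int_A M^{(i)}(s,t)\,dM^{(j)}(s,t)$ produces the cross terms $\sum_{i,j} \int_A f_{j,i}\,dJ_{M^{(j)},M^{(i)}}$, while the deterministic piece $f_{j,0}(s-s_1)$ is converted by a one-dimensional integration-by-parts argument performed vertically (i.e.\ in the $s$-variable along each horizontal section), exactly mirroring \eqref{eq-2.20}. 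Summing over $j$ and using additivity of boundary and area integrals over partitions of $A$ extends the formula to piecewise-constant stochastic partial derivatives.

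For the general case, I would approximate the $f_{j,i}$ by bounded simple functions $f_{j,i}^{(n)}$ converging in the $L^2$-spaces prescribed by the two finiteness hypotheses of the theorem, and pass to the limit termwise in $L^2(\Omega)$ exactly as in \eqref{eq-2.23}--\eqref{eq-2.29}. The required isometries are
\[
\bE\Bigl[\bigl(\textstyle\int_A f\,dJ_{M^{(j)},M^{(i)}}\bigr)^2\Bigr] = \bE\Bigl[\textstyle\int_A f^2\,d_s\langle M^{(i)}\rangle_{st_0}\,d_t\langle M^{(j)}\rangle_{s_0 t}\Bigr],
\]
which follows from \eqref{e:quad-var-J} with the order of $d_s$ and $d_t$ swapped relative to Section \ref{sec:J-MN} (this reflects the use of \eqref{def-J'} rather than \eqref{def-J}), together with analogous $L^2$-bounds for the line integrals on $\partial A$ and the area integrals against $dM^{(j)}$.

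The main obstacle is purely bookkeeping: the roles of $d_s$ and $d_t$ in every quadratic-variation expression are exchanged, and the index order in $J_{M^{(j)},M^{(i)}}$ is the reverse of that in Theorem \ref{Thm-Green formula}. This is precisely what accounts for the swap in the finiteness conditions \eqref{e:c1}--\eqref{e:c2} between the two theorems. No new analytical ingredient is needed beyond what was developed in Section \ref{sec:J-MN}.
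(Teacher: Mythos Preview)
Your proposal is correct and follows essentially the same approach as the paper: the paper's proof consists of a single observation that the second equality in \eqref{def-J'} yields
\[
\int_A f_{j,i}\,dJ_{M^{(j)},M^{(i)}} = \int_{\partial A} f_{j,i}\,M^{(i)}(s,t)\,M^{(j)}(s,dt) - \int_A f_{j,i}\,M^{(i)}(s,t)\,dM^{(j)}(s,t),
\]
after which ``the rest of the proof is the same as that of Theorem~\ref{Thm-Green formula}.'' Your outline is precisely a fleshed-out version of this symmetry argument, correctly swapping the roles of $s$ and $t$ in the reduction step, the constant-coefficient computation, and the $L^2$ approximation.
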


\begin{proof}
Noting that by the second equality of  \eqref{def-J'}, we have that for the left-hand side of \eqref{eq-2.19}, 
\begin{align*} 
	 \int_A f_{j,i}(s,t) dJ_{M^{(j)},M^{(i)}}(s,t)
	&= \int_{\partial A} f_{j,i} M^{(i)}(s,t) M^{(j)}(s,dt) - \int_A f_{j,i} M^{(i)}(s,t) dM^{(j)}(s,t).
\end{align*}
The rest of the proof is the same as that of Theorem \ref{Thm-Green formula} and thus is omitted.
\end{proof}

\subsection{Quadratic covariations of $J_{MN}$ and $J_{M'N'}$ }

Let $M,N, M', N'$ be continuous martingales belonging to $\mathfrak M_s^4(z_0).$  In this subsection, for the completion of the theory, we shall derive the quadratic covariation for $J_{MN} = \psi \cdot MN$ and $J_{M'N'} = \psi \cdot M'N'$ 
with $\psi (z_1, z_2) = \1_{[z_1 {\curlywedge\atop \curlywedge} z_2]}$  which are defined in Section \ref{sec:J-MN}. 
More specifically, we aim to show
 \begin{equation}\label{e:quad-var-J'}
d\langle J_{MN}, J_{M',N'} \rangle_{st} = d_t\langle M, M'\rangle_{st} d_s\langle N, N' \rangle_{st}.
\end{equation}

Recall  that $J_{MN}$ can be approximated by $J^n_{MN}$ as in \eqref{e:J^n}, and that  one can approximate the 
function $\psi (z_1, z_2) = \1_{[z_1 {\curlywedge\atop \curlywedge} z_2]}$ by 
\begin{align*}
	\psi = \lim_{n\to \infty} \sum_{i,j, k,l\in \mathbf I_n} \psi_{ijkl},
\end{align*}
where $\psi_{ijkl}(z_1, z_2) = \1_{\rectangle_{i,j}}(z_1) \1_{\rectangle_{k,l}}(z_2)$ and  $\mathbf I_n$ is a subset of 
$\{(i,j, k, l), i, j, k,l \in 1, \dots, 2^n\}$ which  consists of $(i,j,k,l)$ satisfying $0\le i<k\le 2^n-1$ and $0\le l<j\le 2^n-1$.  Denote by $\mathbf J_n$  the subset of $\mathbf I_n \times \mathbf{I}_n$ such that for $((i,j,k,l),(i',j',k',l')) \in \mathbf J_n$, the four rectangles $A=\rectangle_{i,j}$, $B=\rectangle_{k,l}$, $A'=\rectangle_{i',j'}$, $B'=\rectangle_{k',l'}$ are of the same position as in  Case 3 in the proof of \eqref{e:increment} in Section \ref{sec:J-MN}. That is, $A$ and $A'$ are at the same horizontal level and are at the upper left of $B$ and $B'$, while $B$ and $B'$ are at the same vertical level. Now  the quadratic covariation can be computed as follows,
\begin{align} \label{eq-quadratic JMN and JM'N'}
	\left\langle J_{MN}, J_{M'N'} \right\rangle_{z_0}  
	&= \lim_{n\to \infty} \sum_{(i,j,k,l) \in \mathbf I_n} \sum_{(i',j',k',l') \in \mathbf  I_n} \left\langle \psi_{ijkl} \cdot MN, \psi_{i'j'k'l'} \cdot M'N' \right\rangle_{z_0} \nonumber \\
	&= \lim_{n\to \infty} \Bigg( \sum_{((i,j,k,l), (i',j',k',l')) \in \mathbf J_n} \left\langle \psi_{ijkl} \cdot MN, \psi_{i'j'k'l'} \cdot M'N' \right\rangle_{z_0} \notag\\
	&\qquad \qquad\qquad  + \sum_{((i,j,k,l), (i',j',k',l')) \notin \mathbf J_n} \left\langle \psi_{ijkl} \cdot MN, \psi_{i'j'k'l'} \cdot M'N' \right\rangle_{z_0} \Bigg).
\end{align}
For the first term on the right-hand side of \eqref{eq-quadratic JMN and JM'N'}, observing that the indices
$((i,j,k',l'), (i',j',k,l)),$ $ ((i',j',k,l), (i,j,k',l'))$, and $((i',j',k',l'), (i,j,k,l))$ all belong to  $\mathbf J_n$ as long as $((i,j,k,l), (i',j',k',l'))$ 
$ \in\mathbf  J_n$. Thus, we have
\begin{align} \label{eq-quadratic symmetric}
	&\sum_{((i,j,k,l), (i',j',k',l')) \in \mathbf J_n} \left\langle \psi_{ijkl} \cdot MN, \psi_{i'j'k'l'} \cdot M'N' \right\rangle_{z_0}\nonumber \\
	&= \dfrac{1}{4} \sum_{((i,j,k,l), (i',j',k',l')) \in \mathbf J_n} \Big( \left\langle \psi_{ijkl} \cdot MN, \psi_{i'j'k'l'} \cdot M'N' \right\rangle_{z_0}
	+ \left\langle \psi_{ijk'l'} \cdot MN, \psi_{i'j'kl} \cdot M'N' \right\rangle_{z_0} \nonumber \\
	&\qquad \qquad \qquad \qquad+ \left\langle \psi_{i'j'kl} \cdot MN, \psi_{ijk'l'} \cdot M'N' \right\rangle_{z_0}
	+ \left\langle \psi_{i'j'k'l'} \cdot MN, \psi_{ijkl} \cdot M'N' \right\rangle_{z_0} \Big) \nonumber \\
	&= \sum_{((i,j,k,l), (i',j',k',l')) \in\mathbf J_n} \langle M,M'\rangle(\rectangle_{i,j} \cap \rectangle_{i',j'}) 
	\langle N,N'\rangle (\rectangle_{k,l} \cap \rectangle_{k',l'})\nonumber\\
	&= \sum_{(i,j,k,l)\in \mathbf  I_n} \langle M,M'\rangle (\rectangle_{i,j}) \langle N,N'\rangle (\rectangle_{k,l}),
\end{align}
where the second equality follows from \eqref{e:increment}.

For the second term in \eqref{eq-quadratic JMN and JM'N'}, noting that when $((i,j,k,l), (i',j',k',l')) \notin \mathbf J_n$, the four 
rectangles $A=\rectangle_{i,j}$, $B=\rectangle_{k,l}$, $A'=\rectangle_{i',j'}$ and $B'=\rectangle_{k',l'}$ are of the same position 
as in Case 1 or Case 2 in the proof of \eqref{e:increment} in Section \ref{sec:J-MN}. Thus, we have
\begin{align} \label{eq-quadratic nonsymmetric}
	\sum_{((i,j,k,l), (i',j',k',l')) \notin \mathbf J_n} \left\langle \psi_{ijkl} \cdot MN, \psi_{i'j'k'l'} \cdot M'N' \right\rangle_{z_0}= 0.
\end{align}
Therefore, substituting \eqref{eq-quadratic symmetric} and \eqref{eq-quadratic nonsymmetric} into \eqref{eq-quadratic JMN and JM'N'}, 
one has
\begin{align*}
	  \left\langle J_{MN}, J_{M'N'} \right\rangle_{z_0}  
	&= \lim_{n\to \infty} \sum_{(i,j,k,l)\in \mathbf  I_n}\langle M,M'\rangle (\rectangle_{ij}) \langle N,N'\rangle(\rectangle_{kl}) \\
	&= \int_{R_{z_0}}\int_{R_{z_0}} \1_{[z_1{\curlywedge \atop \curlywedge }z_2]} d\langle M,M'\rangle_{z_1}d\langle N,N'\rangle_{z_2},
\end{align*}
and this implies \eqref{e:quad-var-J'}.

\begin{remark} One can easily check that the computation is still valid if the function $\psi(z_1, z_2)$ is the limit of 
$\psi^{(n)}(z_1,z_2)$ in $\mathcal L_{MN}^2(z_0) $ and in $\mathcal L_{M'N'}^2(z_0)$ satisfying
\begin{align} \label{eq-condition on psi}
	\psi^{(n)}(z_1,z_2) \psi^{(n)}(z_1',z_2') = \psi^{(n)}(z_1,z_2') \psi^{(n)}(z_1',z_2),
\end{align} 
for all $z_1=(s_1, t_1), z_2=(s_2, t_2), z_1'=(s_1',t_1'), z_2'=(s_2', t_2')$ satisfying $\max \{s_1, s_2\} \le s_1' = s_2'$ and 
$\max \{t_1', t_2'\} \le t_1 = t_2$. Clearly, $\psi^{(n)}(z_1,z_2) = h_1(z_1)h_2(z_2)$ satisfies \eqref{eq-condition on psi}.  
Moreover, by fixing $(z_1', z_2')$, one can check that all the functions satisfying \eqref{eq-condition on psi} must have the 
form $\psi^{(n)}(z_1,z_2) = h_1(z_1)h_2(z_2)$. In this situation, we have 
 \begin{equation}\label{e:qua-var2}
\langle \psi\cdot MN, \psi\cdot M'N' \rangle_{z}
	= \iint_{R_{z} \times R_{z}} |\psi(z_1,z_2)|^2 d\langle M, M'\rangle_{z_1} d\langle N,N'\rangle_{z_2} . 
\end{equation}
\end{remark}

\section{SPDEs for the eigenvalue processes}\label{sec:SPDE}

In this section, we will derive a system of SPDEs satisfied by the eigenvalue processes of the Brownian sheet 
matrix $\X$ defined in \eqref{e:X}. We assume that the family $b(s,t)$ of independent Brownian sheets have deterministic initial values such that  the  eigenvalues of the  symmetric matrix $\X(0, 0)$ are distinct.

Recall that the standard $1$-dimensional Brownian sheet $\{B(s,t), (s,t)\in\bR_+^2\}$ is a centered Gaussian random 
field with covariance function
\begin{align*}
	\bE \left[ B(s_1,t_1) B(s_2,t_2) \right] = (s_1 \wedge s_2)  (t_1 \wedge t_2).
\end{align*}
It follows directly from L\'evy's characterization of Brownian motion that for any fixed $t_1,\ t_2 > 0$,
\begin{align*}
	\dfrac{1}{\sqrt{t_1}} B(t_1, \cdot), \
	\dfrac{1}{\sqrt{t_2}} B(\cdot, t_2)
\end{align*}
are standard $1$-dimensional Brownian motions.

Consider the Brownian sheet matrix defined in \eqref{e:X}.  As in Appendix \ref{sec:appendix}, for $1\le i \le d$,  
let $\lambda_i (s,t) := \Phi_i(b (s,t)) =\tilde \Phi_i(\X(s,t))$ be the $i$-th biggest eigenvalue of $\X(s,t)$, where the 
function $\tilde \Phi_i: \mathbf S_d\to \bR$ maps a $d\times d$ symmetric matrix $A\in\mathbf S_d$ to its 
$i$-th biggest eigenvalue $\tilde \Phi_i(A)$.

Let $S, T > 0$ be constants. By applying  It\^{o}'s formula to $\lambda_i(S, \cdot)$, we have
\begin{align} \label{eq-3.1}
	&\lambda_i (S,T) - \lambda_i(0,0)= \lambda_i (S,T) - \lambda_i(S,0) 
	= \Phi_i(b (S, T)) - \Phi_i(b (S, 0)) \nonumber \\
	&= \sum_{k \le h} \int_0^{T} \dfrac{\partial \Phi_i}{\partial b_{kh}}(b(S,t)) b_{kh}(S,dt) 
	+ \dfrac{1}{2} \sum_{k \le h} \int_0^{T} \dfrac{\partial^2 \Phi_i}{\partial b_{kh}^2}(b(S,t)) \langle b_{kh}(S,dt) \rangle \nonumber \\
	&= \sum_{k \le h} \int_0^{T} \dfrac{\partial \Phi_i}{\partial b_{kh}}(b(S,t)) b_{kh}(S,dt) 
	+ \dfrac{S}{2} \sum_{k \le h} \int_0^{T} \dfrac{\partial^2 \Phi_i}{\partial b_{kh}^2}(b(S,t)) dt.
\end{align}

By \eqref{eq-3.1} and \eqref{eq-2.2}, we have
\begin{align} \label{eq-3.2}
	\lambda_i (S,T) - \lambda_i(0,0)
	= \sum_{k \le h} \int_0^{T} \dfrac{\partial \Phi_i}{\partial b_{kh}}(b(S,t)) b_{kh}(S,dt) + 
	\sum_{j:j \neq i}\int_0^{T}  \dfrac{S}{\lambda_i (S,t) - \lambda_j (S,t)} dt.
\end{align}
We shall express the right-hand side of \eqref{eq-3.2} as a sum of double integrals on $[0,S]\times[0,T]$. 
We first deal with the second term. 

For $i \neq j$, as in Appendix \ref{sec:appendix} we denote for any $x \in \bR^{d(d+1)/2}$,
\begin{equation}\label{e:psi}
\Psi_{ij}(x)=\frac{1}{\Phi_i(x)-\Phi_j(x)}.
\end{equation}
By It\^{o}'s formula, we have
\begin{align} \label{eq-3.4}
	& \dfrac{S}{\lambda_i (S,t) - \lambda_j (S,t)} = S \Psi_{ij}(b(S,t))\nonumber \\
	&= \int_0^{S} \dfrac{1}{\lambda_i(s,t) - \lambda_j(s,t)} ds
	+ \sum_{k \le h} \int_0^{S} s \dfrac{\partial \Psi_{ij}}{\partial b_{kh}}(b(s,t)) b_{kh}(ds,t) \nonumber \\
	&\qquad\quad + \dfrac{1}{2} \sum_{k \le h} \int_0^{S} s \dfrac{\partial^2 \Psi_{ij}}{\partial b_{kh}^2}(b(s,t)) 
	\langle b_{kh}(ds,t) \rangle\nonumber \\
	&= \int_0^{S} \dfrac{1}{\lambda_i(s,t) - \lambda_j(s,t)} ds
	+ \sum_{k \le h} \int_0^{S} s \dfrac{\partial \Psi_{ij}}{\partial b_{kh}}(b(s,t)) b_{kh}(ds,t) \nonumber \\
	&\qquad \quad + \dfrac{1}{2} \sum_{k \le h} \int_0^{S} st \dfrac{\partial^2\Psi_{ij}}{\partial b_{kh}^2}(b(s,t)) ds.
\end{align}
Substituting \eqref{eq-2.11} into \eqref{eq-3.4}, we have
\begin{align} \label{eq-3.5}
	&\dfrac{S}{\lambda_i (S,t) - \lambda_j (S,t)} \nonumber \\
	&= \int_0^{S} \dfrac{1}{\lambda_i(s,t) - \lambda_j(s,t)} ds + \sum_{k \le h} \int_0^{S} s 
	\dfrac{\partial \Psi_{ij}}{\partial b_{kh}}(b(s,t)) b_{kh}(ds,t) \nonumber \\
	&\qquad  +	\int_0^{S} \dfrac{2st}{(\lambda_i (s,t) - \lambda_j (s,t))^3} ds \nonumber \\
	&\qquad   + \int_0^{S} \dfrac{1}{(\lambda_i (s,t) - \lambda_j (s,t))} \sum_{l:l \neq i,j} \dfrac{st}{\left( \lambda_i (s,t) - \lambda_l (s,t) \right) 
	\left( \lambda_j (s,t) - \lambda_l (s,t) \right)} ds.
\end{align}

Lastly, we substitute \eqref{eq-3.5} to \eqref{eq-3.2} to obtain
\begin{align} \label{eq-3.6}
	& \lambda_i (S,T) - \lambda_i(0,0) \nonumber \\
	&= \sum_{k \le h} \int_0^{T} \dfrac{\partial \Phi_i}{\partial b_{kh}}(b(S,t)) b_{kh}(S,dt) 
	+ \sum_{j:j \neq i}\int_0^{T} \int_0^{S}  \dfrac{1}{\lambda_i(s,t) - \lambda_j(s,t)} ds dt \nonumber \\
	&\qquad + \sum_{j:j \neq i} \sum_{k \le h}  \int_0^{T} \int_0^{S} s \dfrac{\partial \Psi_{ij}}{\partial b_{kh}}(b(s,t))  b_{kh}(ds,t) dt \nonumber \\
	&\qquad +  \sum_{j:j \neq i} \int_0^{T} \int_0^{S} \dfrac{2st}{(\lambda_i (s,t) - \lambda_j (s,t))^3} ds dt \nonumber \\
	&\qquad +  \sum_{j\neq l: j \neq i, l\neq i} \int_0^{T}  \int_0^{S} \dfrac{1}{(\lambda_i (s,t) - \lambda_j (s,t))}  
	\dfrac{st}{\left( \lambda_i (s,t) - \lambda_l (s,t) \right) \left( \lambda_j (s,t) - \lambda_l (s,t) \right)} dsdt.
\end{align}
The last term on the right-hand side of \eqref{eq-3.6} vanishes, noting that it sums  over all $j\neq l$ for $j,l \neq i$ and that  
$ \frac{1}{(\lambda_i  - \lambda_j)}  	\frac{st}{\left( \lambda_i - \lambda_l \right) \left( \lambda_j - \lambda_l \right)}$ changes 
its sign by interchanging the indices $j$ and $l$. Therefore, we have
\begin{align} \label{eq:pde}
	& \lambda_i (S,T) - \lambda_i(0,0) \nonumber \\
	&= \sum_{k \le h} \int_0^{T} \dfrac{\partial \Phi_i}{\partial b_{kh}}(b(S,t)) b_{kh}(S,dt) 
	+ \sum_{j:j \neq i} \int_0^{T} \int_0^{S}  \dfrac{1}{\lambda_i(s,t) - \lambda_j(s,t)} ds dt \nonumber \\
	&\qquad + \sum_{j:j \neq i} \sum_{k \le h}\int_0^{T}  \int_0^{S} s \dfrac{\partial \Psi_{ij}}{\partial b_{kh}}(b(s,t))  b_{kh}(ds,t) dt \nonumber \\
	&\qquad + \sum_{j:j \neq i} \int_0^{T} \int_0^{S} \dfrac{2st}{(\lambda_i (s,t) - \lambda_j (s,t))^3} dsdt.
\end{align}

Now, we apply the multi-dimensional  Green's formula (Theorem \ref{Coro-Green formula}) to the first term on the right-hand 
side of  \eqref{eq:pde}. By \cite[Theorem 2.1]{Jaramillo2018}   (see also \cite[Theorem 1.1]{SXY20}), it has positive probability 
for the eigenvalues $\{\lambda_i(s,t), 1\le i\le d\}$ of the Brownian sheet matrix  $\X$ to collide. To avoid the singularity at the 
collisions,  we shall restrict $(s,t)$ in a region where all eigenvalues keep a distance from each other.

Define the region $D_\epsilon$ for  $\epsilon>0$  by 
\begin{align*}
	D_{\epsilon} = \left\{ (x_1, \ldots, x_d) \in \bR^d: x_i - x_{i+1} > \epsilon,  1 \le i \le d-1 \right\}. 
\end{align*}
 Let $\chi_{\epsilon} \in C_b^{\infty}(\bR^d)$ such that $\chi_{\epsilon}(x) = 1$ for $x \in D_{\epsilon}$ and 
 $\chi_{\epsilon}(x) = 0$ for $x \in \bR^d \setminus D_{\frac\epsilon2}$. For simplicity, we denote 
 $\Phi = (\Phi_1, \ldots, \Phi_d)$. By It\^{o}'s formula, we have
\begin{align} \label{eq-3.69}
	&\left( \dfrac{\partial \Phi_i}{\partial b_{kh}} \chi_{\epsilon} (\Phi) \right) (b(s,t)) \nonumber \\
	&= \left( \dfrac{\partial \Phi_i}{\partial b_{kh}} \chi_{\epsilon} (\Phi) \right) (b(0,t)) \nonumber \\
	&\qquad + \sum_{k' \le h'} \int_0^{s} \left( \dfrac{\partial^2 \Phi_i}{\partial b_{kh} \partial b_{k'h'}} \chi_{\epsilon} (\Phi) 
	+ \dfrac{\partial \Phi_i}{\partial b_{kh}} \sum_{l=1}^d \dfrac{\partial \chi_{\epsilon}}{\partial x_l} (\Phi) 
	\dfrac{\partial \Phi_l}{\partial b_{k'h'}} \right)(b(r,t)) b_{k'h'}(dr,t) \nonumber \\
	&\qquad + \dfrac{t}{2} \sum_{k' \le h'} \int_0^s \left( \dfrac{\partial^3 \Phi_i}{\partial b_{kh} \partial b_{k'h'}^2} \chi_{\epsilon} (\Phi) 
	+ 2 \dfrac{\partial^2 \Phi_i}{\partial b_{kh} \partial b_{k'h'}} \sum_{l=1}^d \dfrac{\partial \chi_{\epsilon}}{\partial x_l} (\Phi) 
	\dfrac{\partial \Phi_l}{\partial b_{k'h'}} \right. \nonumber \\
	&\qquad \qquad  \left. + \dfrac{\partial \Phi_i}{\partial b_{kh}} \sum_{l,l'=1}^d \dfrac{\partial^2 \chi_{\epsilon}}{\partial x_l \partial x_{l'}} (\Phi) 
	\dfrac{\partial \Phi_l}{\partial b_{k'h'}} \dfrac{\partial \Phi_{l'}}{\partial b_{k'h'}} + \dfrac{\partial \Phi_i}{\partial b_{kh}} 
	\sum_{l=1}^d \dfrac{\partial \chi_{\epsilon}}{\partial x_l} (\Phi) \dfrac{\partial^2 \Phi_l}{\partial b_{k'h'}^2} \right) (b(r,t)) dr.
\end{align}

Note that the function $\chi_{\epsilon}$ and all its partial derivatives vanish when $x \in \bR^d \setminus D_{\frac\epsilon2}$, 
by Lemma \ref{Lemma-eigen derivative}, all the integrand functions in \eqref{eq-3.69} are bounded. Hence, we can apply 
 Theorem  \ref{Coro-Green formula} to obtain
\begin{align}
	& \sum_{k \le h} \int_0^{T} \left( \dfrac{\partial \Phi_i}{\partial b_{kh}} \chi_{\epsilon} (\Phi) \right) (b(S,t)) b_{kh}(S,dt) \nonumber \\
	&= \sum_{k \le h} \int_{\partial R_{ST}} \left( \dfrac{\partial \Phi_i}{\partial b_{kh}} \chi_{\epsilon} (\Phi) \right)(b(s,t)) b_{kh}(s,dt) \nonumber \\
	&= \sum_{k \le h} \iint_{R_{ST}} \left( \dfrac{\partial \Phi_i}{\partial b_{kh}} \chi_{\epsilon} (\Phi) \right)(b(s,t)) db_{kh}(s,t) \nonumber \\
	&\qquad + \sum_{k \le h} \sum_{k' \le h'} \iint_{R_{ST}} \left( \dfrac{\partial^2 \Phi_i}{\partial b_{kh} \partial b_{k'h'}} \chi_{\epsilon} (\Phi) 
	+ \dfrac{\partial \Phi_i}{\partial b_{kh}} \sum_{l=1}^d \dfrac{\partial \chi_{\epsilon}}{\partial x_l} (\Phi) \dfrac{\partial \Phi_l}
	{\partial b_{k'h'}} \right) (b(s,t)) dJ_{ b_{kh} b_{k'h'}}(s,t) \nonumber \\
	&\qquad + \sum_{k \le h} \iint_{R_{ST}} \dfrac{t}{2} \sum_{k' \le h'} \Bigg( \dfrac{\partial^3 \Phi_i}{\partial b_{kh} \partial b_{k'h'}^2} \chi_{\epsilon} (\Phi) + 2 \dfrac{\partial^2 \Phi_i}{\partial b_{kh} \partial b_{k'h'}} \sum_{l=1}^d \dfrac{\partial \chi_{\epsilon}}{\partial x_l} 
	(\Phi) \dfrac{\partial \Phi_l}{\partial b_{k'h'}}  \nonumber \\
	&\qquad + \dfrac{\partial \Phi_i}{\partial b_{kh}} \sum_{l,l'=1}^d \dfrac{\partial^2 \chi_{\epsilon}}{\partial x_l \partial x_{l'}} (\Phi) 
	\dfrac{\partial \Phi_l}{\partial b_{k'h'}} \dfrac{\partial \Phi_{l'}}{\partial b_{k'h'}} + \dfrac{\partial \Phi_i}{\partial b_{kh}} 
	\sum_{l=1}^d \dfrac{\partial \chi_{\epsilon}}{\partial x_l} (\Phi) \dfrac{\partial^2 \Phi_l}{\partial b_{k'h'}^2} \Bigg) (b(s,t)) b_{kh}(s,dt) ds. \label{e:3.9}
\end{align}

Denote 
\begin{align*}
	T_{\epsilon} = \left\{ (s,t): (\Phi_1(b (s, t)), \ldots, \Phi_d(b (s,t))) \notin D_{\epsilon} \right\}.
\end{align*}
We shall construct a sequence of adapted random time pairs $\{(\sigma_{\frac1n}, \tau_{\frac1n})\}_{n\ge1}$ such that 
$(\sigma_{\frac1n}, \tau_{\frac1n})\prec (\sigma_{\frac1{n+1}}, \tau_{\frac1{n+1}})$. First,  we choose a pair of random 
times $(\sigma_1, \tau_1)$ as follows.  For each fixed $\omega\in \Omega$, if $T_{1}(\omega) = \emptyset$, then we 
choose $\sigma_1(\omega) = \tau_1(\omega) = \infty$; if $T_1(\omega) \not= \emptyset$, then by Zorn's lemma, there 
exists a minimal element $(s_1, t_1)$ in $T_{1}(\omega)$, and we set $(\sigma_1(\omega), \tau_1(\omega)) = (s_1, t_1).$ 
By the meaning of minimal element, we have $[(s,t)\prec\prec (\sigma_1, \tau_1)] = [(\Phi_1(b (s, t)), \ldots, \Phi_d(b (s,t))) 
\in D_{1} ] \in \cF_{st}$. Next, for an arbitrary fixed $\omega\in \Omega$, let $(\sigma_{\frac12}, \tau_{\frac12})$ be a minimal 
element of the set
\[\left\{ (\sigma_1(\omega),\tau_1(\omega))\prec (s,t): (\Phi_1(b (s, t)), \ldots, \Phi_d(b (s,t))) \notin D_{\frac12} \right\},\]
and  $ (\sigma_{\frac12}, \tau_{\frac12}) = (\infty, \infty)$ if the set is empty.
Clearly $(\sigma_{1}, \tau_1)\prec (\sigma_{\frac12},\tau_{\frac12})$, 
\[[(\sigma_1, \tau_1)\prec (s,t)\prec\prec(\sigma_{\frac12}, \tau_{\frac12})]=[(\Phi_1(b (s, t)), \ldots, \Phi_d(b (s,t))) 
\in D_{\frac12}\backslash D_{1} ] \in \cF_{st},\]
and hence $[ (s,t)\prec\prec(\sigma_{\frac12}, \tau_{\frac12})]\in \cF_{st}$.   The rest of random time pairs 
$(\sigma_{\frac1n},\tau_{\frac1n})$ can be constructed in the same way. Define
\begin{equation}\label{e:sigma-tau}
(\sigma,\tau)=\sup_{n\ge 1} (\sigma_{\frac1n}, \tau_{\frac1n}).
\end{equation} 
Thus, $[ (s,t)\prec\prec(\sigma, \tau)]=\cup_{n\ge1}[ (s,t)\prec\prec(\sigma_{\frac1n},\tau_{\frac1n})]\in \cF_{st}$.

 For each $n\ge 1$, on the set  $[\omega\in\Omega: (S,T)\prec\prec (\sigma_{\frac1n}(\omega), \tau_{\frac1n}(\omega))]$, 
 we have  that for $(s,t)\prec (S,T)$, $\Phi(b(s,t)) = (\Phi_1(b(s,t), \ldots, \Phi_d(b(s,t))$ belongs to  $D_{\frac1n}$ and all 
 the partial derivatives of the function $\chi_{\frac1n}$ vanish. 
Thus, by \eqref{e:3.9}, we have for $(S,T)\prec\prec (\sigma, \tau)$, 
\begin{align} \label{eq-3.11}
	&\sum_{k \le h} \int_0^{T} \dfrac{\partial \Phi_i}{\partial b_{kh}}(b(S,t)) b_{kh}(S,dt)
	= \sum_{k \le h} \int_{\partial R_{ST}} \dfrac{\partial \Phi_i}{\partial b_{kh}}(b(s,t)) b_{kh}(s,dt) \nonumber \\
	&= \sum_{k \le h} \iint_{R_{ST}} \dfrac{\partial \Phi_i}{\partial b_{kh}}(b(s,t)) db_{kh}(s,t)+ \sum_{k \le h} 
	\sum_{k' \le h'} \iint_{R_{ST}} \dfrac{\partial^2 \Phi_i}{\partial b_{kh} \partial b_{k'h'}}(b(s,t)) dJ_{ b_{kh} b_{k'h'}}(s,t) \nonumber \\
	&\quad + \sum_{k \le h} \iint_{R_{ST}} \dfrac{t}{2} \sum_{k' \le h'} \dfrac{\partial^3 \Phi_i}{\partial b_{kh} 
	\partial b_{k'h'}^2}(b(s,t)) b_{kh}(s,dt) ds.
\end{align}

Therefore, substitute \eqref{eq-3.11} to \eqref{eq:pde}, we have for $(S, T)\prec\prec (\sigma, \tau)$ and $1\le i\le d$, 
\begin{align} \label{eq-spde}
	&\lambda_i (S,T) - \lambda_i(0,0)\notag\\
	& = \sum_{k \le h} \iint_{R_{ST}} \dfrac{\partial \Phi_i}{\partial b_{kh}}(b(s,t)) db_{kh}(s,t)+ \sum_{k \le h} \sum_{k' \le h'} 
	\iint_{R_{ST}} \dfrac{\partial^2 \Phi_i}{\partial b_{kh} \partial b_{k'h'}}(b(s,t)) dJ_{ b_{kh} b_{k'h'}}(s,t) \nonumber \\
	&\qquad+ \sum_{k \le h} \sum_{k' \le h'}\iint_{R_{ST}} \dfrac{t}{2}  \dfrac{\partial^3 \Phi_i}{\partial b_{kh}
	\partial b_{k'h'}^2}(b(s,t)) b_{kh}(s,dt) ds + \sum_{j:j \neq i} \int_0^{T} \int_0^{S} \dfrac{1}{\lambda_i(s,t) - \lambda_j(s,t)} ds dt \nonumber \\
	&\qquad + \sum_{j:j \neq i} \sum_{k \le h}\int_0^{T}  \int_0^{S} \dfrac{\partial \Psi_{ij}}{\partial b_{kh}}(b(s,t)) s b_{kh}(ds,t) dt  
	+  \sum_{j:j \neq i} \int_0^{T}\int_0^{S} \dfrac{2st}{(\lambda_i (s,t) - \lambda_j (s,t))^3} ds dt.
\end{align}

Noting that by \eqref{e:psi},
\begin{align*}
	\dfrac{\partial \Psi_{ij}}{\partial b_{kh}} (b(s,t))
	&= \dfrac{-1}{(\lambda_i(s,t) - \lambda_j(s,t))^2} \left( \dfrac{\partial \Phi_i}{\partial b_{kh}} - \dfrac{\partial \Phi_j}{\partial b_{kh}} \right), 
\end{align*}
we have, by \eqref{eq-2.4'},
\begin{align*}
	\sum_{j:j\neq i} \dfrac{\partial \Psi_{ij}}{\partial b_{kh}} (b(s,t))
	= \dfrac{1}{2} \sum_{k' \le h'} \dfrac{\partial^3 \Phi_i}{\partial b_{kh} \partial b_{k'h'}^2}.
\end{align*}
Therefore,  \eqref{eq-spde} can be written in a symmetric form: for $(S, T)\prec\prec (\sigma, \tau)$ and $1\le i\le d$, 
 \begin{align} \label{eq-spde'}
	&\lambda_i (S,T) - \lambda_i(0,0)\notag\\
	=& \sum_{k \le h} \iint_{R_{ST}} \dfrac{\partial \Phi_i}{\partial b_{kh}}(b(s,t)) db_{kh}(s,t) + \sum_{k \le h} \sum_{k' \le h'} 
	\iint_{R_{ST}} \dfrac{\partial^2 \Phi_i}{\partial b_{kh} \partial b_{k'h'}}(b(s,t)) dJ_{ b_{kh} b_{k'h'}}(s,t) \nonumber \\
	&\quad+  \frac12 \sum_{k \le h} \sum_{k' \le h'} \iint_{R_{ST}}  \dfrac{\partial^3 \Phi_i}{\partial b_{kh} \partial b_{k'h'}^2}(b(s,t)) 
	\Big( tb_{kh}(s,dt) ds+ sb_{kh}(ds,t) dt \Big)\nonumber \\
	&\quad + \sum_{j:j \neq i}\int_0^{T} \int_0^{S}  \left(\dfrac{1}{\lambda_i(s,t) - \lambda_j(s,t)} + \dfrac{2st}
	{(\lambda_i (s,t) - \lambda_j (s,t))^3} \right)ds dt.
\end{align}

Recalling that we have assumed the initial eigenvalues are distinct, by the continuity of eigenvalue functions, we have $(0,0)\prec\prec (\sigma, \tau)$ a.s. 
Thus, 
for almost all $\omega\in \Omega$, we have the following formal partial differential equations 
near the initial point $(0,0)$: 
for $1\le i\le d$,
 \begin{align} \label{eq-spde''}
	d\lambda_i (s,t) \notag
	=& \sum_{k \le h} \dfrac{\partial \Phi_i}{\partial b_{kh}}(b(s,t)) db_{kh}(s,t) + \sum_{k \le h} \sum_{k' \le h'}\dfrac{\partial^2 \Phi_i}
	{\partial b_{kh} \partial b_{k'h'}}(b(s,t)) dJ_{ b_{kh} b_{k'h'}}(s,t) \nonumber \\
	&\quad+  \frac12 \sum_{k \le h} \sum_{k' \le h'} \dfrac{\partial^3 \Phi_i}{\partial b_{kh} \partial b_{k'h'}^2}(b(s,t)) 
	\Big( tb_{kh}(s,dt) ds+ sb_{kh}(ds,t) dt \Big)\nonumber \\
	&\quad + \sum_{j:j \neq i}\left( \dfrac{1}{\lambda_i(s,t) - \lambda_j(s,t)}+  \dfrac{2st}{(\lambda_i (s,t) - \lambda_j (s,t))^3}\right) ds dt.
\end{align}


\section{High-dimensional limit of the empirical spectral distributions}\label{sec:limit}

In this section, we study the high-dimensional limit of empirical spectral measure of the rescaled Brownian sheet matrices. In Section \ref{sec:tightness}, we first obtain the tightness of the empirical spectral measures (Theorem \ref{Thm-tightness}), and then show the convergence by Wigner's theorem (Theorem \ref{Thm-limit measure}). In Section \ref{sec:PDE}, we derive a  PDE  for the Stieltjes transform of the limiting measure and also a McKean-Vlasov equation for the limiting measure.

\subsection{Tightness and high-dimensional limit}
\label{sec:tightness}

For every integer $d \ge 1$, let $\X^d(s,t)$ be a $d\times d$ matrix given by \eqref{e:X}, and  $\{\lambda_i^d(s,t):1 \le i \le d\}$ 
be the set of eigenvalues of $\X^d(s,t)$. Define the empirical spectral measure of $\X^d(s,t)/\sqrt d$
\begin{align}\label{e:em}
	L_d(s,t)(dx)
	= \dfrac{1}{d} \sum_{i=1}^d \delta_{\lambda_i^d(s,t)/\sqrt{d}} (dx).
\end{align}
For a measurable function $g:\bR\to \bR$, we write
\begin{align} \label{eq-emprical}
	\langle g, L_d(s,t) \rangle
	:=\int_{\bR} g(x) L_d(s,t) (dx)
	= \dfrac{1}{d} \sum_{i=1}^d g \left( \dfrac{\lambda_i^d(s,t)}{\sqrt{d}} \right).
\end{align}

Let $\cP(\bR)$ be the set of probability measures 
on $\bR$ equipped with its weak topology and $C_0(\bR)$ be the set of continuous functions on $\bR$ that vanish at infinity. 
Throughout this subsection, let $S$ and $T$ be two fixed positive numbers, and recall the notation 
$R_{ST}=[0,S]\times[0,T].$

The following tightness criterion for probability-measure-valued stochastic processes is a straightforward generalization of
 \cite[Proposition B.3]{SYY20} (see also \cite[Section 3]{Rogers1993} where  this criterion was applied implicitly).
\begin{lemma} \label{Lemma-tightness}
Let $\{\mu_d(s,t), (s,t) \in R_{ST}\}_{d \in \bN}\subset C(R_{ST}, \cP(\bR))$ be a sequence of probability-measure-valued 
random fields. Assume the following conditions are satisfied:
	\begin{enumerate}
		\item[(A)] there exists a non-negative function $\varphi(x)$ satisfying $\lim\limits_{|x| \to +\infty} \varphi(x) = +\infty$ and
		\begin{align*}
			\sup_{d \in \bN} \bE \left[ \left| \left\langle \varphi, \mu_d(s,t) \right\rangle \right|^{\gamma} \right] < \infty,
			~ \forall (s,t) \in R_{ST},
		\end{align*}
		for some $\gamma > 0$;
		
		\item[(B)] there exists a countable dense subset $\{f_i(x), x\in\bR\}_{i \in \bN}$ of $C_0(\bR)$, such that for some positive constants $a_1>1$ and  $a_2>1$,
		\begin{align*}
			&\bE \left[ \left| \left\langle f_i, \mu_d(s_2,t_2) \right\rangle  - \left\langle f_i, \mu_d(s_1,t_1) \right\rangle \right|^{a_1} \right]
			\le C_{f_i,S,T} |(s_2,t_2) - (s_1,t_1)|^{a_2}
		\end{align*}
for all  $(s_1,t_1), (s_2,t_2) \in R_{ST},  d \in \bN$ and $i\in N$, where $C_{f_i,S,T}$ is a constant depending only on $S$, $T$ and $f_i$.
	\end{enumerate}
	Then the set  $\{\mu_d(s,t), (s,t) \in R_{ST}\}_{d \in \bN}$ of $C(R_{ST}, \cP(\bR))$-valued random elements is tight, i.e., it induces a tight family of probability measures on $C(R_{ST}, \cP(\bR))$.
\end{lemma}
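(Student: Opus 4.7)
The plan is to reduce tightness in $C(R_{ST},\mathcal{P}(\bR))$ to two ingredients: pointwise-in-$(s,t)$ tightness in $\mathcal{P}(\bR)$ supplied by condition (A), and equicontinuity in $(s,t)$ of the real-valued coordinate processes $\langle f_i,\mu_d\rangle$ supplied by condition (B). These are then fused together via a Mitoma-type diagonal extraction, exploiting the fact that on any tight subset of $\mathcal{P}(\bR)$ the weak topology is metrizable by countably many test functions drawn from the dense family $\{f_i\}\subset C_0(\bR)$.

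For the marginal tightness (Step 1), fix $(s,t)\in R_{ST}$. For every $K>0$, Markov's inequality applied to condition (A) gives
\[
\bP\!\left(\mu_d(s,t)(\{\varphi>K\})>\eta\right)\le \eta^{-\gamma} K^{-\gamma}\,\bE\!\left[\langle\varphi,\mu_d(s,t)\rangle^\gamma\right],
\]
which is bounded uniformly in $d$. Since $\{\varphi\le K\}$ is relatively compact in $\bR$ (because $\varphi(x)\to\infty$ as $|x|\to\infty$), a dyadic choice of $K$ and $\eta$ produces a weakly compact $\mathcal{K}\subset\mathcal{P}(\bR)$ with $\bP(\mu_d(s,t)\in\mathcal{K})$ arbitrarily close to $1$ uniformly in $d$.

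For the equicontinuity (Step 2), set $X^{(i)}_d(s,t):=\langle f_i,\mu_d(s,t)\rangle$. Condition (B) is a two-parameter Kolmogorov--Chentsov bound for $X^{(i)}_d$, so Kolmogorov's continuity/tightness theorem, applied on the rectangle $R_{ST}$ (where the distance exponent on the right-hand side must exceed the parameter dimension of $R_{ST}$), yields tightness of $\{X^{(i)}_d\}_{d\in\bN}$ in $C(R_{ST},\bR)$ together with uniform modulus-of-continuity estimates. A standard diagonal extraction then produces a subsequence $d_n\to\infty$ along which $X^{(i)}_{d_n}$ converges in $C(R_{ST},\bR)$ simultaneously for every $i\in\bN$.

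Step 3 combines these. Fix a tight compact $\mathcal{K}\subset\mathcal{P}(\bR)$ from Step 1. On $\mathcal{K}$ the weak topology is metrized by
\[
\rho(\mu,\nu)=\sum_{i\ge 1}2^{-i}\bigl(|\langle f_i,\mu\rangle-\langle f_i,\nu\rangle|\wedge 1\bigr),
\]
since $\{f_i\}$ is dense in $C_0(\bR)$. Splitting the sum into $i\le N$ and $i>N$, the equicontinuity of each $X^{(i)}_{d_n}$ (Step 2) controls the head, while the tail is bounded by $2^{-N}$ uniformly. This shows that $(s,t)\mapsto\mu_{d_n}(s,t)$ is equicontinuous in $\rho$ with high probability, hence tight in $C(R_{ST},\mathcal{P}(\bR))$ by the Arzel\`a--Ascoli theorem in this space.

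The main obstacle is Step 3: the passage from coordinate-wise equicontinuity (of the countable family of scalar processes $X^{(i)}_d$) plus marginal tightness to joint equicontinuity in the metric $\rho$ on $\mathcal{P}(\bR)$. The delicate point is the interplay between the $2^{-i}$ weighting in $\rho$, the high-probability marginal tightness on a compact $\mathcal{K}\subset\mathcal{P}(\bR)$, and the uniform moduli from Step 2; one must control all three simultaneously with a single subsequence, which is handled by the diagonal extraction followed by an $\varepsilon/2$-splitting argument. The Kolmogorov--Chentsov step in Step 2 is a secondary concern: its application on the two-parameter rectangle $R_{ST}$ requires the distance exponent to exceed the parameter dimension, which is the implicit content of the assumption on $a_1,a_2$.
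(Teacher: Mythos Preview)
The paper does not give a proof; it simply cites \cite[Proposition~B.3]{SYY20} and \cite[Section~3]{Rogers1993} and calls the lemma a straightforward generalization. Your three-step outline --- Prokhorov tightness of the marginals from~(A), Kolmogorov--Chentsov tightness of each scalar process $\langle f_i,\mu_d\rangle$ in $C(R_{ST},\bR)$ from~(B), then Arzel\`a--Ascoli in a weighted-sum metric generating the weak topology --- is exactly the standard argument those references carry, so your approach matches the paper's (by reference).

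There is, however, a point you raise and then misrepresent. You correctly observe that Kolmogorov--Chentsov on the two-dimensional rectangle $R_{ST}$ needs the right-hand exponent to exceed the parameter dimension, i.e.\ $a_2>2$, but you then call this ``the implicit content of the assumption on $a_1,a_2$.'' It is not: the lemma as stated only requires $a_1>1$ and $a_2>1$, which is the one-parameter threshold. Indeed, in the paper's own verification of~(B) in the proof of Theorem~\ref{Thm-tightness} the authors take $a_1=4$ and $a_2=4\beta$ with $\beta\in(\tfrac14,\tfrac12)$, so $a_2\in(1,2)$. The hypothesis as written is therefore not strong enough to run your Step~2 on a two-parameter index set. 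In the concrete application this is harmless --- the Gaussian increment bound~\eqref{eq-holder in expectation} extends to any even moment, so one can push $a_2$ above $2$ by raising the power --- but as a stand-alone lemma your argument actually needs $a_2>2$, and you should say so plainly rather than calling it implicit.
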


The Kolmogorov continuity theorem for random fields (see e.g. \cite[Theorem 2.5.1 in Chapter 5]{K02}) implies that, on every compact interval, the 
Brownian sheet is $\beta$-H\"{o}lder continuous for $\beta\in(0, \frac12)$. The following lemma is a direct consequence of Fernique's theorem (\cite{fernique}). 

\begin{lemma} \label{Lemma-Holder of Bs} 
For any $\beta\in(0,\frac12)$, there exists a positive constant $\delta=\delta(\beta, S, T)$ depending only on $(\beta, S, T)$ such that
	\begin{align*}
		\bE \left[ \exp \left(\delta \left\| B \right\|_{\beta;R_{ST}}^2 \right) \right] < \infty,
	\end{align*}
where
	\begin{align} \label{eq-def-Holder of Bs}
		\left\|B \right\|_{\beta;R_{ST}}
		= \sup_{(s_1,t_1), (s_2,t_2) \in R_{ST}} \dfrac{\left| B(s_2,t_2) - B(s_1,t_1) \right|}{|(s_2,t_2)-(s_1,t_1)|^{\beta}}
	\end{align}
	is the $\beta$-H\"{o}lder norm of $B$ on the rectangle $R_{ST}$.
\end{lemma}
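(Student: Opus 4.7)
The plan is to invoke Fernique's theorem in a suitable separable Banach space containing the sample paths of the Brownian sheet. Recall Fernique's theorem: if $X$ is a centered Gaussian random element taking values in a separable Banach space $(E,\|\cdot\|)$ with $\|X\|<\infty$ almost surely, then there exists $\delta>0$ such that $\mathbb{E}[\exp(\delta\|X\|^2)]<\infty$. The only work is to put the Brownian sheet into that framework with the Hölder norm \eqref{eq-def-Holder of Bs}.

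First, I would fix $\beta\in(0,1/2)$ and choose an auxiliary exponent $\beta'\in(\beta,1/2)$. As already noted in the paragraph preceding the lemma, the Kolmogorov continuity theorem for random fields (e.g.\ \cite[Theorem 2.5.1 in Chapter 5]{K02}) applied to $B$ on $R_{ST}$ shows that almost every sample path of $B$ belongs to $C^{\beta'}(R_{ST})$. Since $\beta<\beta'$ and $R_{ST}$ is compact, such paths automatically lie in the little Hölder space $c^{\beta}(R_{ST})$, defined as the closure of $C^{\infty}(R_{ST})$ in the Banach space $(C^{\beta}(R_{ST}),\|\cdot\|_{\beta;R_{ST}})$. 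A standard fact is that $c^{\beta}(R_{ST})$ is separable, which is the reason for replacing $C^{\beta}$ by its ``little'' version.

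Next, I would check that $B$ is a Borel-measurable, centered Gaussian random element of $c^{\beta}(R_{ST})$. The Gaussian property is inherited from the finite-dimensional distributions of $B$; measurability follows from the fact that $\|B\|_{\beta;R_{ST}}$ and the evaluation maps can be realized as countable suprema/limits of continuous linear functionals (sample path continuity makes the supremum in \eqref{eq-def-Holder of Bs} realizable over a countable dense subset of $R_{ST}\times R_{ST}$). Having set up $B$ as a centered Gaussian element of a separable Banach space with $\|B\|_{\beta;R_{ST}}<\infty$ a.s., Fernique's theorem immediately yields a constant $\delta=\delta(\beta,S,T)>0$ such that $\mathbb{E}[\exp(\delta\|B\|_{\beta;R_{ST}}^2)]<\infty$, which is the claim.

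The main (mildly technical) obstacle is exactly the separability issue: the ``big'' Hölder space $C^{\beta}(R_{ST})$ is not separable, so one cannot apply Fernique's theorem there directly. The standard workaround, as above, is to use the compact embedding $C^{\beta'}\hookrightarrow c^{\beta}$ for $\beta'>\beta$ to realize $B$ inside a separable space without losing any information about the Hölder seminorm. Once that is in place, the estimate is a black-box application of Fernique's inequality and no further computation is needed.
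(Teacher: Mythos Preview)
Your proposal is correct and follows exactly the route the paper indicates: the paper gives no detailed proof but simply states that the lemma is a direct consequence of Fernique's theorem together with the Kolmogorov continuity theorem, and you have supplied precisely the standard details (passing to the separable little H\"older space via the embedding $C^{\beta'}\hookrightarrow c^{\beta}$) that make this invocation rigorous.
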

Now we are ready to derive the following  result on the tightness of the sequence $\{L_d(s,t), (s,t)\in R_{ST}\}_{d\in\bN}$. 
\begin{theorem} \label{Thm-tightness}
	Assume that there exists a nonnegative function $\varphi(x) \in C^1(\bR)$ with bounded  derivative, such that 
	\begin{align} \label{eq-initial condition}
	\lim\limits_{|x| \to +\infty} \varphi(x) = +\infty	~ \mbox{ and } ~ \sup_{d \in \bN} \left\langle \varphi, L_d(0,0) \right\rangle < \infty.
	\end{align}
Then the sequence $\{L_d(s,t), (s,t) \in R_{ST}\}_{d \in \bN}$ is tight on $C(R_{ST}, \cP(\bR))$.
\end{theorem}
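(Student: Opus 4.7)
The plan is to apply Lemma \ref{Lemma-tightness}, so I need to verify both conditions (A) and (B) for the empirical measures $L_d(s,t)$. The central tool for both parts is the Hoffman-Wielandt inequality, which gives
\[
\sum_{i=1}^d \bigl(\lambda_i^d(s,t) - \lambda_i^d(s',t')\bigr)^2 \le \bigl\|\X^d(s,t) - \X^d(s',t')\bigr\|_{HS}^2,
\]
combined with the Cauchy-Schwarz bound $\sum_i|\lambda_i| \le \sqrt{d}\,\sqrt{\sum_i\lambda_i^2}$. Since each entry $X_{kh}$ is a (scaled) Brownian sheet, the Hilbert-Schmidt increment is easily controlled by the increments of the $d(d+1)/2$ independent scalar sheets $b_{kh}$.

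For condition (A), the Lipschitz assumption $\|\varphi'\|_\infty<\infty$ gives
\[
\langle \varphi, L_d(s,t)\rangle \le \langle \varphi, L_d(0,0)\rangle + \frac{\|\varphi'\|_\infty}{d\sqrt{d}} \sum_{i=1}^d |\lambda_i^d(s,t) - \lambda_i^d(0,0)|.
\]
Cauchy-Schwarz and Hoffman-Wielandt upgrade this to $\langle\varphi,L_d(s,t)\rangle \le \langle\varphi,L_d(0,0)\rangle + \frac{\|\varphi'\|_\infty}{d}\|\X^d(s,t)-\X^d(0,0)\|_{HS}$. Using $\bE[\|\X^d(s,t)-\X^d(0,0)\|_{HS}^2] = \sum_{kh}\bE[(X_{kh}(s,t)-X_{kh}(0,0))^2]\le C d^2 st$ and Jensen's inequality, together with the standing hypothesis \eqref{eq-initial condition}, yields $\sup_{d\in\bN}\bE[\langle\varphi,L_d(s,t)\rangle]<\infty$ for all $(s,t)\in R_{ST}$, so (A) holds with $\gamma=1$.

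For condition (B), choose a countable dense subset $\{f_i\}_{i\in\bN}$ of $C_0(\bR)$ consisting of Lipschitz functions (e.g.\ smooth compactly supported functions). For such $f$ with Lipschitz constant $L_f$, the same two-step reduction yields
\[
|\langle f, L_d(s_2,t_2)\rangle - \langle f, L_d(s_1,t_1)\rangle| \le \frac{L_f}{d}\,\|\X^d(s_2,t_2)-\X^d(s_1,t_1)\|_{HS}.
\]
Now I invoke Lemma \ref{Lemma-Holder of Bs}: for any $\beta\in(0,1/2)$, each $\|b_{kh}\|_{\beta;R_{ST}}$ has finite moments of every order, uniformly in $(k,h)$, and
\[
\|\X^d(s_2,t_2)-\X^d(s_1,t_1)\|_{HS} \le C|(s_2,t_2)-(s_1,t_1)|^{\beta}\Bigl(\sum_{k\le h}\|b_{kh}\|_{\beta;R_{ST}}^{2}\Bigr)^{1/2}.
\]
Taking the $a_1$-th moment, applying Minkowski's inequality to the sum of $O(d^2)$ i.i.d.\ quantities, and noting that each summand has a moment bound independent of $d$, I obtain $\bE[(\sum_{k\le h}\|b_{kh}\|_{\beta;R_{ST}}^2)^{a_1/2}]\le C d^{a_1}$. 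The $d^{a_1}$ factor cancels the $d^{-a_1}$ prefactor, leaving
\[
\bE\bigl[|\langle f,L_d(s_2,t_2)\rangle - \langle f,L_d(s_1,t_1)\rangle|^{a_1}\bigr] \le C_{f,S,T}\,|(s_2,t_2)-(s_1,t_1)|^{\beta a_1}.
\]
Picking $\beta$ close to $1/2$ and $a_1$ sufficiently large (so that $\beta a_1 > 1$, and in fact $>2$ if one wants a Kolmogorov-Chentsov modulus for two-parameter fields), this verifies (B) and the theorem follows from Lemma \ref{Lemma-tightness}.

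The main obstacle I anticipate is simply the careful tracking of $d$-dependent scalings: the factor $1/\sqrt{d}$ inside $\varphi$ and $f$, the factor $1/d$ in front of the empirical sum, and the typical size $O(d\sqrt{st})$ of $\|\X^d(s,t)-\X^d(0,0)\|_{HS}$ must balance so that the final bounds are uniform in $d$. The Hoffman-Wielandt + Cauchy-Schwarz combination achieves exactly this balance, and once the Hölder-norm moment control from Lemma \ref{Lemma-Holder of Bs} is in hand, the rest is routine.
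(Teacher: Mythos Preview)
Your proof is correct and follows essentially the same strategy as the paper: the Lipschitz bound plus Hoffman--Wielandt reduces $|\langle f,L_d(s_2,t_2)\rangle-\langle f,L_d(s_1,t_1)\rangle|$ to $d^{-1}\|\X^d(s_2,t_2)-\X^d(s_1,t_1)\|_{HS}$, and then Minkowski together with the H\"older-norm moment bound of Lemma~\ref{Lemma-Holder of Bs} verifies the hypotheses of Lemma~\ref{Lemma-tightness}. The paper handles (A) and (B) in one stroke via a single fourth-moment estimate (taking $a_1=\gamma=4$ and $a_2=4\beta$ with $\beta\in(1/4,1/2)$), whereas you verify (A) separately with $\gamma=1$ and leave $a_1$ free in (B); this has the incidental advantage that you can push $a_2=\beta a_1$ above $2$ should the two-parameter Kolmogorov--Chentsov criterion in Lemma~\ref{Lemma-tightness} actually demand it.
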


\begin{proof}
Let $f$ be an arbitrary continuously differentiable function with bounded derivative.  By the mean value theorem and 
the Hoffman-Wielandt inequality (see e.g. \cite[Lemma 2.1.19]{anderson2010}), we have for $(s_2,t_2), (s_1,t_1) \in R_{ST}$,
	\begin{align} \label{eq-pathwise holder}
		&\left| \left\langle f, L_d(s_2,t_2) \right\rangle - \left\langle f, L_d(s_1,t_1) \right\rangle \right|^2
		=\left| \dfrac{1}{d} \sum_{i=1}^d \left( f \left( \dfrac{\lambda_i^d(s_2,t_2)}{\sqrt{d}} \right) - f \left( \dfrac{\lambda_i^d(s_1,t_1)}{\sqrt{d}} \right) \right) \right|^2 \nonumber \\
		&\le \dfrac{1}{d} \sum_{i=1}^d \left|  f \left( \dfrac{\lambda_i^d(s_2,t_2)}{\sqrt{d}} \right) - f \left( \dfrac{\lambda_i^d(s_1,t_1)}{\sqrt{d}} \right) \right|^2 
		\le \dfrac{\|f'\|_{L^{\infty}}^2}{d^2} \sum_{i=1}^d \left| \lambda_i^d(s_2,t_2) - \lambda_i^d(s_1,t_1) \right|^2 \nonumber \\
		&\le \dfrac{\|f'\|_{L^{\infty}}^2}{d^2} \sum_{i,j=1}^d \left| X_{ij}^d(s_2,t_2) - X_{ij}^d(s_1,t_1) \right|^2 
		= \dfrac{2\|f'\|_{L^{\infty}}^2}{d^2} \sum_{i \le j}\left| b_{ij}(s_2,t_2) - b_{ij}(s_1,t_1) \right|^2.
	\end{align}
	Noting that $\{b_{ij}(s,t)\}_{1 \le i \le j \le d}$ are standard Brownian sheets, by \eqref{eq-pathwise holder} and the Minkowski inequality,  
	we have for some $\beta\in (0,\frac12)$,
	\begin{align} \label{eq-holder in expectation}
		& \bE \left[ \left| \left\langle f, L_d(s_2,t_2) \right\rangle - \left\langle f, L_d(s_1,t_1) \right\rangle \right|^4 \right] \nonumber \\
		\le& \dfrac{4\|f'\|_{L^{\infty}}^4}{d^4} \bE \left[ \left( \sum_{i\le j} \left| b_{ij}(s_2,t_2) - b_{ij}(s_1,t_1) \right|^2 \right)^2 \right] \nonumber \\
		\le& \dfrac{4\|f'\|_{L^{\infty}}^4}{d^4} \left( \sum_{i\le j} \left( \bE \left[ \left| b_{ij}(s_2,t_2) - b_{ij}(s_1,t_1) \right|^4 \right] \right)^{1/2} \right)^2 \nonumber \\
		=& \dfrac{4\|f'\|_{L^{\infty}}^4}{d^4} \left( \dfrac{d(d+1)}{2} \left( \bE \left[ \left| b_{11}(s_2,t_2) - b_{11}(s_1,t_1) \right|^4 \right] \right)^{1/2} \right)^2 \nonumber \\
		=& \dfrac{(d+1)^2 \|f'\|_{L^{\infty}}^4}{d^2} \bE \left[ \left| b_{11}(s_2,t_2) - b_{11}(s_1,t_1) \right|^4 \right] \nonumber \\
		\le& 4 \|f'\|_{L^{\infty}}^4\bE \left[ \left\|b_{11} \right\|_{\beta;R_{ST}}^4 |(s_2,t_2)-(s_1,t_1)|^{4\beta} \right] \nonumber \\
		=& C(\beta, f', S,T) |(s_2,t_2)-(s_1,t_1)|^{4\beta},
	\end{align}
	where  $C(\beta, f', S,T)$ is a finite positive constant by Lemma \ref{Lemma-Holder of Bs}.

As a consequence, Condition (A) in Lemma \ref{Lemma-tightness} is satisfied with $\gamma=4$.  Moreover, if we choose $\beta \in (\frac14,\frac12)$, then assumption \eqref{eq-initial condition} and  \eqref{eq-holder in expectation} together yield Condition (B) in Lemma \ref{Lemma-tightness} with $a_1=4$, $a_2=4\beta$ and $\{f_i\}_{i \in \bN}$ being a sequence of functions in $C^1(\bR)$ with bounded derivative that is dense in $C_0(\bR)$. Then the proof is concluded by Lemma \ref{Lemma-tightness}. 	
\end{proof}
\begin{remark}
In the above proof, the independence of the Brownian sheets $b_{ij}$ ($i \le j$) actually is not used. 
\end{remark}

Denote by $\mu_{sc}(dx)$  the semicircle distribution, i.e. $\mu_{sc}(dx) = p_{sc}(x) dx$, where the density function is given by
\begin{align*}
	p_{sc}(x) = \dfrac{\sqrt{4-x^2}}{2\pi} 1_{[-2,2]}(x).
\end{align*}
Let $\{\tilde \mu(s,t), (s,t) \in R_{ST}\}$ be an element in $C(R_{ST}, \cP(\bR))$ such that $\tilde \mu(s,t)$ is a probability measure with density function $\tilde p_{s,t}(x) = \frac{1}{\sqrt{st}} p_{sc}(x/\sqrt{st})$. That is, $\tilde \mu(s,t)$ is a rescaled semicircle distribution. Here, we use the convention that $\tilde \mu(s,t)(dx) = \delta_0(dx)$ if $st=0$.

\begin{theorem} \label{Thm-limit measure}
	Assume the same condition as in Theorem \ref{Thm-tightness}. Also assume that $\{\X^d(0,0), d\in \bN\}$ are  symmetric deterministic matrices such that 
	\begin{align*}
		D := \sup_{d \in \bN} \left\| \frac{1}{\sqrt{d}} \X^d(0,0) \right\|
		< \infty,
	\end{align*}
	where $\|\cdot\|$ is the operator norm (the operator norm of a symmetric matrix is its largest eigenvalue),  and that $L_d(0,0)$ converges weakly to some probability measure $\mu(0,0)$ as $d$ goes to infinity.
	
	Then, as $d\to \infty$, $\{L_d(s,t), (s,t) \in R_{ST}\}$ converges in probability to $\{\mu(s,t), (s,t) \in R_{ST}\}$ in $C(R_{ST}, \cP(\bR))$ which is given by
	\begin{align}\label{e:limit-measure}
		\mu(s,t) = \tilde{\mu}(s,t) \boxplus \mu(0,0),
	\end{align}
where $\boxplus$ is the free additive convolution of two probability measures (\cite[Definition 5.3.20]{anderson2010}).
\end{theorem}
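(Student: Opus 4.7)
The strategy is to combine the tightness from Theorem \ref{Thm-tightness} with a pointwise identification of the limit via the classical Wigner semicircle law and the free additive convolution. By Theorem \ref{Thm-tightness}, $\{L_d\}_{d\in\bN}$ is tight on $C(R_{ST},\cP(\bR))$, so any subsequence has a weakly convergent further subsequence. Since the candidate limit $\mu(s,t) = \tilde{\mu}(s,t)\boxplus \mu(0,0)$ is deterministic, it suffices to (i) verify that for each fixed $(s,t)$ the marginal $L_d(s,t)$ converges in probability to $\mu(s,t)$, and (ii) check that the map $(s,t)\mapsto \mu(s,t)$ is continuous into $\cP(\bR)$; a standard subsequence argument then upgrades weak subsequential convergence to convergence in probability on the full path space $C(R_{ST},\cP(\bR))$.

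For the pointwise identification, the key decomposition is
\[
\tfrac{1}{\sqrt d}\X^d(s,t) = \tfrac{1}{\sqrt d}\X^d(0,0) + \tfrac{1}{\sqrt d}\bigl(\X^d(s,t)-\X^d(0,0)\bigr).
\]
By \eqref{e:X} and independence of increments of the Brownian sheets $b_{ij}$, the matrix $Y^d(s,t):=\X^d(s,t)-\X^d(0,0)$ is independent of the deterministic $\X^d(0,0)$, and when $st>0$ the entries of $Y^d(s,t)/\sqrt{st}$ are independent centered Gaussians with variance $1$ off the diagonal and $2$ on the diagonal, i.e.\ $Y^d(s,t)/\sqrt{st}$ is a standard GOE matrix. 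The classical Wigner semicircle law then gives that the empirical spectral measure of $Y^d(s,t)/\sqrt d$ converges almost surely to the rescaled semicircle $\tilde\mu(s,t)$.

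Next I invoke the free additive convolution theorem for the sum of a bounded deterministic matrix and an independent Wigner matrix: since $\sup_d\|\X^d(0,0)/\sqrt d\|=D<\infty$ and $L_d(0,0)\Rightarrow \mu(0,0)$, and since the independent Wigner summand $Y^d(s,t)/\sqrt d$ has limiting spectral distribution $\tilde\mu(s,t)$, a standard application of, e.g., \cite[Theorem 5.4.5]{anderson2010} yields that $L_d(s,t)$ converges in probability (in fact almost surely in the weak topology) to $\mu(0,0)\boxplus\tilde\mu(s,t)=\mu(s,t)$. The case $st=0$ is trivial since $Y^d(s,t)\equiv 0$ and $\tilde\mu(s,t)=\delta_0$ is the identity for $\boxplus$.

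Finally, continuity of $(s,t)\mapsto\mu(s,t)$ in $\cP(\bR)$ follows from continuity of $(s,t)\mapsto\tilde\mu(s,t)$ (obvious from its explicit density) together with the continuity of free convolution in its arguments, which can be verified at the level of Cauchy or $R$-transforms. Combining tightness, the deterministic pointwise limit, and continuity of $\mu$, a diagonal/subsequence argument identifies every subsequential weak limit of $\{L_d\}$ in $C(R_{ST},\cP(\bR))$ with the deterministic continuous path $\{\mu(s,t)\}$, which promotes distributional convergence to the claimed convergence in probability. The main technical point to handle carefully is Step (i): one must confirm that the classical free-convolution theorem, usually stated for a single Wigner matrix, applies verbatim to our rescaled sheet increments, and one must keep track of independence of $Y^d(s,t)$ from $\X^d(0,0)$ -- the latter being deterministic trivializes this, but the argument relies essentially on the fact that the initial matrix is non-random, which is exactly the standing hypothesis.
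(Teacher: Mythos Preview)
Your proposal is correct and follows essentially the same route as the paper's proof: the identical decomposition $\tfrac{1}{\sqrt d}\X^d(s,t)=\tfrac{1}{\sqrt d}\X^d(0,0)+\tfrac{1}{\sqrt d}(\X^d(s,t)-\X^d(0,0))$, recognition of the increment as a rescaled Wigner matrix via self-similarity of the Brownian sheet, invocation of \cite[Theorem 5.4.5]{anderson2010} for the free additive convolution, and then the tightness-plus-pointwise-limit subsequence argument. The only difference is that you explicitly verify the continuity of $(s,t)\mapsto\mu(s,t)$, whereas the paper leaves this implicit (it is in any case forced a posteriori, since every subsequential limit lies in $C(R_{ST},\cP(\bR))$ and agrees with $\mu$ on a countable dense set).
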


\begin{proof}
	For any fixed $(s,t) \in R_{ST}$ with $st > 0$, we have
	\begin{align*}
		\dfrac{1}{\sqrt{d}} \X^d(s,t)
		= \dfrac{1}{\sqrt{d}} \left( \X^d(s,t) - \X^d(0,0) \right) + \dfrac{1}{\sqrt{d}} \X^d(0,0).
	\end{align*}
By the self-similarity property of the Brownian sheet, one can see that  $\frac{1}{\sqrt{std}} \left( \X^d(s,t) - \X^d(0,0) \right)$ is a $d \times d$  
Wigner matrix (see e.g. \cite[Section 2.1]{anderson2010} for the definition). By Wigner's semicircle law (see e.g. \cite[Theorem 2.1.1]{anderson2010}), 
the  empirical spectral measure of  $\frac{1}{\sqrt{std}} \left( \X^d(s,t) - \X^d(0,0) \right)$ converges in probability to the semicircle distribution 
$\mu_{sc}$  in $\mathcal P(\bR)$ as $d \to \infty$. Thus, the empirical spectral measure of $\frac{1}{\sqrt{d}} \left( \X^d(s,t) - \X^d(0,0) \right)$ 
converges in probability to the measure $\tilde \mu(s,t)$  in $\mathcal P(\bR)$ as $d \to \infty$. Note that the  empirical spectral measure of the 
matrix $\frac{1}{\sqrt{d}} \X^d(0,0)$ is $L_d(0,0)$, which converges to $\mu(0,0)$  in $\mathcal P(\bR)$. Therefore, by \cite[Theorem 5.4.5]{anderson2010}, 
for every $(s,t) \in R_{ST}$ with $st > 0$, the empirical spectral measure of the matrix $\frac{1}{\sqrt{d}} \X^d(s,t)$ converges in probability 
to the measure $\mu(s,t)$ given by \eqref{e:limit-measure}  in $\mathcal P(\bR)$ as $d$ goes to infinity. Moreover, when $st=0$, $\frac{1}{\sqrt{d}} \X^d(s,t) = \frac{1}{\sqrt{d}} \X^d(0,0)$, and the empirical spectral measures converge to $\mu(0,0)$ in $\mathcal P(\bR)$.

	By Theorem \ref{Thm-tightness}, the sequence $\{L_d(s,t), (s,t) \in R_{ST}\}_{d \in \bN}$ is tight. Let $\{\nu(s,t), (s,t) \in R_{ST}\}$ 
	be the weak limit of an arbitrary convergent subsequence of  $\{L_d(s,t), (s,t) \in R_{ST}\}_{d \in \bN}$. Noting that  for every fixed 
	$(s,t) \in R_{ST}$, $L_d(s,t)$ is the  empirical spectral measure of the matrix $\frac{1}{\sqrt{d}} \X^d(s,t)$ and it converges 
	in probability to the deterministic measure $\mu(s,t)$, we can conclude that $\nu(s,t)=\mu(s,t)$ for $(s,t) \in R_{ST}$.
	
	Therefore, the limit of any convergent subsequence of $\{L_d(s,t), (s,t) \in R_{ST}\}_{d \in \bN}$ is the deterministic measure 
	$\{\mu(s,t), (s,t) \in R_{ST}\}$ given by \eqref{e:limit-measure}.  The proof is concluded.
\end{proof}

\subsection{PDEs for the limit measure} \label{sec:PDE}

It is known (see e.g. \cite{anderson2010}) that the high-dimensional limit $\hat \mu_t(dx)$ of the empirical measures of Dyson Brownian 
motion  \eqref{Eq:Dyson} satisfies the following McKean-Vlasov equation, 
\begin{align} \label{eq-Dyson limit equation}
	\dfrac{\partial}{\partial t} \int_{\bR} f(x) \hat \mu_t(dx)
	= \dfrac{1}{2} \iint_{\bR^2} \dfrac{f'(x) - f'(y)}{x-y} \hat \mu_t(dx) \hat \mu_t(dy), ~~\mbox{for } f \in C_b^2(\bR).
\end{align}
The Stieltjes transform 
\[\hat G_t(z)=\int_{\bR} \frac1{z-x} \hat \mu_t(dx),  \mbox{ for } z\in \mathbb C\backslash \bR\] 
of $\hat \mu_t(dx)$ solves the following complex version of inviscid Burgers' equation
\begin{align*}
	\partial_t \hat G_t(z) + \hat G_t(z) \partial_z \hat G_t(z) = 0.
\end{align*}

In this subsection, we will derive parallel PDEs for the limit $\mu(s,t)$  (see Theorem \ref{Thm-limit measure}) of the empirical spectral  
measures of  the rescaled Brownian sheet matrices.  We remark that the equations are obtained by the properties the semicircle 
distribution and may have other equivalent forms. 

Assume $\mu(0,0)(dx) = \delta_0(dx)$, then the limiting measure $\mu(s,t) (dx) = \tilde{\mu}(s,t) (dx)$, recalling that 
\[\tilde{\mu}(s,t) (dx) = \tilde p_{s,t}(x)dx=\frac{1}{\sqrt{st}} p_{sc}(x/\sqrt{st})dx\] is a rescaled semicircle distribution. Thus,  
for a test function $f$, we have
\begin{align} \label{eq-PDE}
	\dfrac{\partial^2}{\partial s \partial t} \left\langle f, \mu(s,t) \right\rangle
	=& \dfrac{\partial^2}{\partial s \partial t} \int_{\bR} \dfrac{1}{\sqrt{st}} f(x) p_{sc}(x/\sqrt{st}) dx \nonumber \\
	=& \dfrac{\partial^2}{\partial s \partial t} \int_{\bR} f(\sqrt{st} x) p_{sc}(x) dx \nonumber \\
	=& \dfrac{\partial}{\partial s} \int_{\bR} \dfrac{\sqrt{s} x}{2 \sqrt{t}} f'(\sqrt{st} x) p_{sc}(x) dx \nonumber \\
	=& \int_{\bR} \dfrac{x^2}{4} f''(\sqrt{st} x) p_{sc}(x) dx + \int_{\bR} \dfrac{x}{4 \sqrt{st}} f'(\sqrt{st} x) p_{sc}(x) dx \nonumber \\
	=& \int_{\bR} \dfrac{x^2}{4 (st)^{3/2}} f''(x) p_{sc}(x/\sqrt{st}) dx + \int_{\bR} \dfrac{x}{4 (st)^{3/2}} f'(x) p_{sc}(x/\sqrt{st}) dx \nonumber \\
	=& \dfrac{1}{4st} \left\langle x^2 f''(x), \mu(s,t) \right\rangle + \dfrac{1}{4st} \left\langle x f'(x), \mu(s,t) \right\rangle.
\end{align}

Noting that the  density of the measure $\hat \mu_t(dx)$ is $\hat p_t(x) = \frac{1}{\sqrt{t}} p_{sc}(x/\sqrt{t})$, the left-hand side of 
\eqref{eq-Dyson limit equation} can be written as
\begin{align} \label{eq-limit left}
	\dfrac{\partial}{\partial t} \int_{\bR} f(x) \hat \mu_t(dx)
	=& \dfrac{\partial}{\partial t} \int_{\bR} \dfrac{1}{\sqrt{t}} f(x) p_{sc}(x/\sqrt{t}) dx \nonumber \\
	=& \dfrac{\partial}{\partial t} \int_{\bR} f(\sqrt{t} x) p_{sc}(x) dx \nonumber \\
	=& \int_{\bR} \dfrac{x}{2\sqrt{t}} f'(\sqrt{t} x) p_{sc}(x) dx.
\end{align}
Similarly, the right-hand side of \eqref{eq-Dyson limit equation} can be written as
\begin{align} \label{eq-limit right}
	\dfrac{1}{2} \iint_{\bR^2} \dfrac{f'(x) - f'(y)}{x-y} \hat \mu_t(dx) \hat \mu_t(dy)
	= \dfrac{1}{2} \iint_{\bR^2} \dfrac{f'(\sqrt{t} x) - f'(\sqrt{t} y)}{\sqrt{t} (x-y)} p_{sc}(x) p_{sc}(y) dxdy.
\end{align}
Substituting \eqref{eq-limit left} and \eqref{eq-limit right} into \eqref{eq-Dyson limit equation}, we get
\begin{align} \label{eq-semicircle identity}
	\int_{\bR} x f'(\sqrt{t} x) p_{sc}(x) dx
	= \iint_{\bR^2} \dfrac{f'(\sqrt{t} x) - f'(\sqrt{t} y)}{x-y} p_{sc}(x) p_{sc}(y) dxdy, ~ \forall t > 0.
\end{align}
Taking derivative with respect to $t$ for both sides, we have
\begin{align} \label{eq-semicircle identity derivative}
	\int_{\bR} x^2 f''(\sqrt{t} x) p_{sc}(x) dx
	= \iint_{\bR^2} \dfrac{xf''(\sqrt{t} x) - yf''(\sqrt{t} y)}{x-y} p_{sc}(x) p_{sc}(y) dxdy, ~ \forall t > 0.
\end{align}
Now, combining \eqref{eq-PDE}, \eqref{eq-semicircle identity} and \eqref{eq-semicircle identity derivative}, we have
\begin{align}
	\dfrac{\partial^2}{\partial s \partial t} \left\langle f, \mu(s,t) \right\rangle
	=& \int_{\bR} \dfrac{x^2}{4} f''(\sqrt{st} x) p_{sc}(x) dx
	+ \int_{\bR} \dfrac{x}{4 \sqrt{st}} f'(\sqrt{st} x) p_{sc}(x) dx \nonumber \\
	=& \dfrac{1}{4} \iint_{\bR^2} \dfrac{xf''(\sqrt{st} x) - yf''(\sqrt{st} y)}{x-y} p_{sc}(x) p_{sc}(y) dxdy \nonumber \\
	&+ \dfrac{1}{4\sqrt{st}} \iint_{\bR^2} \dfrac{f'(\sqrt{st} x) - f'(\sqrt{st} y)}{x-y} p_{sc}(x) p_{sc}(y) dxdy \nonumber \\
	=& \dfrac{1}{4} \iint_{\bR^2} \dfrac{xf''(x) - yf''(y)}{x-y} \cdot \dfrac{1}{st} p_{sc}(x/\sqrt{st}) p_{sc}(y/\sqrt{st}) dxdy \nonumber \\
	&+ \dfrac{1}{4} \iint_{\bR^2} \dfrac{f'(x) - f'(y)}{x-y} \cdot \dfrac{1}{st} p_{sc}(x/\sqrt{st}) p_{sc}(y/\sqrt{st}) dxdy \nonumber \\
	=& \dfrac{1}{4} \iint_{\bR^2} \dfrac{xf''(x) - yf''(y)}{x-y} \mu(s,t)(dx) \mu(s,t)(dy) \nonumber \\
	&+ \dfrac{1}{4} \iint_{\bR^2} \dfrac{f'(x) - f'(y)}{x-y} \mu(s,t)(dx) \mu(s,t)(dy).
\end{align}
Therefore, we get the following McKean-Vlasov equation for $\mu(s,t)(dx)$:
\begin{equation}\label{e:MV}
\dfrac{\partial^2}{\partial s \partial t} \left\langle f, \mu(s,t) \right\rangle=
\frac14 \iint_{\bR^2} \frac{\left(xf'(x)\right)'-\left(yf'(y)\right)'}{x-y} (\mu(s,t))^{\otimes 2}(dx,dy).
\end{equation}

Now we consider the Stieltjes transform of $\mu(s,t)(dx)$:
\begin{align*}
	G_{s,t}(z) = \left\langle \dfrac{1}{z-x}, \mu(s,t) \right\rangle, \mbox{ for } z \in \bC\backslash \bR.
\end{align*}
Note that the Stieltjes transform $G(z)$ of the semicircle distribution $p_{sc}(x) dx$ can be written as
\begin{align} \label{eq-08}
	G(z) = \left\langle (z-x)^{-1}, \mu_{sc} \right\rangle
	=& \int_{\bR} \dfrac{1}{z-x} p_{sc}(x) dx \nonumber \\
	=& \int_{\bR} \dfrac{1}{z-x/\sqrt{st}} p_{sc}(x/\sqrt{st}) \dfrac{dx}{\sqrt{st}} \nonumber \\
	=& \int_{\bR} \dfrac{\sqrt{st}}{\sqrt{st}z-x} \tilde p_{s,t}(x) dx \nonumber \\
	=& \sqrt{st} G_{s,t}(\sqrt{st} z).
\end{align}
By \cite[(2.4.6)]{anderson2010} (see also \cite[(2.103)]{Tao2012}), $G(z)$ solves 
\begin{align} \label{eq-09}
	G(z)^2 - z G(z) + 1 = 0.
\end{align}
Substituting \eqref{eq-08} into \eqref{eq-09}, we have
\begin{align*}
	st \left( G_{s,t}(\sqrt{st} z) \right)^2 - z \sqrt{st} G_{s,t}(\sqrt{st} z) + 1 = 0,
\end{align*}
which can be rewritten as
\begin{align} \label{eq-derivative 0 order}
	st \left( G_{s,t}(z) \right)^2 - z G_{s,t}(z) + 1 = 0.
\end{align}
Taking the derivative with respect to $z$ in \eqref{eq-derivative 0 order}, we get 
\begin{align} \label{eq-derivative 1 order}
	2st G_{s,t}(z) \partial_z G_{s,t}(z) - z \partial_z G_{s,t}(z) - G_{s,t}(z) = 0.
\end{align}
Take the derivative with respect to $z$ in \eqref{eq-derivative 1 order}, we have
\begin{align} \label{eq-derivative 2 order}
	2st \left( G_{s,t}(z) \partial_z^2 G_{s,t}(z) + \left( \partial_z G_{s,t}(z) \right)^2 \right) - z \partial_z^2 G_{s,t}(z) - 2\partial_z G_{s,t}(z) = 0.
\end{align}
Now, by choosing $f(x) = (z-x)^{-1}$ in \eqref{eq-PDE}, we have
\begin{align} \label{eq-Stieltjes transform-PDE}
	\dfrac{\partial^2}{\partial s \partial t} G_{s,t}(z)
	=& \dfrac{1}{2st} \left\langle \dfrac{x^2}{(z-x)^3}, \mu(s,t) \right\rangle
	+ \dfrac{1}{4st} \left\langle \dfrac{x}{(z-x)^2}, \mu(s,t) \right\rangle \nonumber \\
	=& \dfrac{1}{2st} \left\langle \dfrac{(z-x)^2 - 2z(z-x) + z^2}{(z-x)^3}, \mu(s,t) \right\rangle
	+ \dfrac{1}{4st} \left\langle \dfrac{(x-z) + z}{(z-x)^2}, \mu(s,t) \right\rangle \nonumber \\
	=& \dfrac{1}{4st} \left\langle \dfrac{1}{z-x}, \mu(s,t) \right\rangle
	- \dfrac{3z}{4st} \left\langle \dfrac{1}{(z-x)^2}, \mu(s,t) \right\rangle
	+ \dfrac{z^2}{2st} \left\langle \dfrac{1}{(z-x)^3}, \mu(s,t) \right\rangle \nonumber \\
	=& \dfrac{1}{4st} G_{s,t}(z) + \dfrac{3z}{4st} \partial_z G_{s,t}(z) + \dfrac{z^2}{4st} \partial_z^2 G_{s,t}(z) \nonumber \\
	=& \dfrac{1}{4st} \big( G_{s,t}(z) + z \partial_z G_{s,t}(z) \big)
	+ \dfrac{z}{4st} \big( 2 \partial_z G_{s,t}(z) + z \partial_z^2 G_{s,t}(z) \big) \nonumber \\
	=& \dfrac{1}{2} G_{s,t}(z) \partial_z G_{s,t}(z) + \dfrac{z}{2} \left( G_{s,t}(z) \partial_z^2 G_{s,t}(z) + \left( \partial_z G_{s,t}(z) \right)^2 \right),
\end{align}
where the last equality follows from \eqref{eq-derivative 1 order} and \eqref{eq-derivative 2 order}. Therefore, we have the following 
generalized Burgers' equation for $G_{s,t}(z)$ 
\begin{equation}\label{e:Burgers'}
	\dfrac{\partial^2}{\partial s \partial t} G_{s,t}(z)=\dfrac{1}{2} G_{s,t}(z) \partial_z G_{s,t}(z) + \dfrac{z}{2} 
	\left( G_{s,t}(z) \partial_z^2 G_{s,t}(z) + \left( \partial_z G_{s,t}(z) \right)^2 \right).
\end{equation}

\appendix
\section{Some lemmas in matrix calculus}\label{sec:appendix}

In this Appendix, we provide some results in matrix analysis which are used in Sections 3 and 4.

\begin{lemma}\label{lem1}
Let $a=(a_1,\dots, a_d), b=(b_1,\dots,b_d)$ be two $d$-dimensional vectors such that $\|a\|=\|b\|=1$ and $a\cdot b=0$. Then 
\[\sum_{1\le i,j\le d}(a_i b_j+a_jb_i)^2=\sum_{1\le i,j\le d}(a_i a_j+b_ib_j)^2=2.\]
\end{lemma}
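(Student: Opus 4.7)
The plan is to expand both squared sums directly and recognize each resulting double sum as a product of scalar quantities that are already controlled by the hypotheses $\|a\|=\|b\|=1$ and $a\cdot b=0$.

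For the first identity, I would expand
\begin{align*}
(a_ib_j+a_jb_i)^2 = a_i^2 b_j^2 + 2 a_i a_j b_i b_j + a_j^2 b_i^2,
\end{align*}
and then sum over $i,j$. The double sum splits into three pieces which factor as $\bigl(\sum_i a_i^2\bigr)\bigl(\sum_j b_j^2\bigr)$, $2\bigl(\sum_i a_i b_i\bigr)\bigl(\sum_j a_j b_j\bigr)$, and $\bigl(\sum_j a_j^2\bigr)\bigl(\sum_i b_i^2\bigr)$. Invoking $\|a\|^2=\|b\|^2=1$ and $a\cdot b=0$, this evaluates to $1+0+1=2$.

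For the second identity, the expansion
\begin{align*}
(a_ia_j+b_ib_j)^2 = a_i^2 a_j^2 + 2 a_i a_j b_i b_j + b_i^2 b_j^2
\end{align*}
leads, after summing over $i,j$, to $\|a\|^4 + 2(a\cdot b)^2 + \|b\|^4$, which equals $1+0+1=2$ under the same hypotheses. Note that the cross term is identical in the two expansions, so both identities reduce to the same orthogonality computation.

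There is no real obstacle here; the lemma is a two-line algebraic identity, and the only thing to be careful about is book-keeping the indices so that the cross terms are correctly identified with $(a\cdot b)^2$. I would therefore present the proof as a single combined calculation for both sums, emphasizing that the common mechanism is the vanishing of $a\cdot b$.
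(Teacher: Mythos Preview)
Your proposal is correct and essentially identical to the paper's own proof: the paper likewise expands $(a_ib_j+a_jb_i)^2$, sums over $i,j$, factors to obtain $2\|a\|^2\|b\|^2+2(a\cdot b)^2=2$, and then remarks that the second identity is shown similarly.
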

\begin{proof} This is elementary to verify:
\begin{align*}
\sum_{1\le i,j\le d}(a_i b_j+a_jb_i)^2=\sum_{i,j} \left(a_i^2b_j^2+a_j^2 b_i^2 +2 a_ib_i a_jb_j\right)=2\|a\|^2\|b\|^2+2(a\cdot b)^2=2.
\end{align*}
Similarly, one can show $\sum\limits_{1\le i,j\le d}(a_i a_j+b_ib_j)^2=2$.
\end{proof}

For a $d \times d$ real symmetric matrix $X = \left( X_{ij} \right)$, we write $X = UDU^\T$, where $U$ is an orthogonal matrix 
and $D = \diag(\lambda_1, \ldots, \lambda_d)$. Noting that the space of $d\times d$ symmetric matrices can be identified with 
$\bR^{d(d+1)/2}$, we consider the $i$-th biggest eigenvalue $\lambda_i=\tilde \Phi_i(X)$ as a function of $d(d+1)/2$ variables 
$(X_{kh},1\le  k\le h\le d)$ for $i=1,\dots, d$.

\begin{lemma} \label{Lemma-eigen derivative}
Suppose that $X$ is a smooth function of parameters $\theta, \xi \in \bR$. Then we have
\begin{align} \label{eq-lemma 1-1}
	&\partial_{\theta} \lambda_i =\left( U^{\T} \partial_{\theta}X U \right)_{ii},
\end{align}
\begin{align} \label{eq-lemma 1-2}
	\partial_{\xi} \partial_{\theta} \lambda_i
	= \left( U^{\T} \partial_{\xi} \partial_{\theta}X U \right)_{ii} + 2 \sum_{j:j\neq i} \dfrac{\left( U^{\T} \partial_{\theta}X U \right)_{ij} 
	\left( U^{\T} \partial_{\xi}X U \right)_{ij} }{\lambda_i-\lambda_j}
\end{align}
and
\begin{align} \label{eq-lemma 1-3}
&\partial_{\xi}^2 \partial_{\theta} \lambda_i
	=\left( U^{\T} \partial_{\xi}^2 \partial_{\theta}X U \right)_{ii}
		+ \sum_{j:j\neq i} \dfrac{4\left( U^{\T} \partial_{\xi} \partial_{\theta}X U \right)_{ij} \left( U^{\T} \partial_{\xi}X U \right)_{ij} + 2\left( U^{\T} \partial_{\theta}X U \right)_{ij} \big( U^{\T} \partial_{\xi}^2X U \big)_{ij}}{\lambda_i-\lambda_j} \nonumber \\
	&\qquad + 2 \sum_{j:j\neq i} \Bigg[\sum_{l:l\neq i} \dfrac{\left( U^{\T} \partial_{\xi}X U \right)_{il} \left( U^{\T} \partial_{\theta}X U \right)_{lj} \left( U^{\T} \partial_{\xi}X U \right)_{ij}}{\left( \lambda_i-\lambda_l \right) \left( \lambda_i-\lambda_j \right)}
	+ \sum_{l:l\neq i} \dfrac{ \left( U^{\T} \partial_{\xi}X U \right)_{il} \left( U^{\T} \partial_{\xi}X U \right)_{lj} \left( U^{\T} \partial_{\theta}X U \right)_{ij}}{\left( \lambda_i-\lambda_l \right) \left( \lambda_i-\lambda_j \right)} \nonumber \\
	&\qquad + \sum_{l:l\neq j} \dfrac{ \left( U^{\T} \partial_{\theta}X U \right)_{il} \left( U^{\T} \partial_{\xi}X U \right)_{lj}  \left( U^{\T} \partial_{\xi}X U \right)_{ij}}{\left( \lambda_j-\lambda_l \right) \left( \lambda_i-\lambda_j \right)}  +  \sum_{l:l\neq j} \dfrac{ \left( U^{\T} \partial_{\xi}X U \right)_{il} \left( U^{\T} \partial_{\xi}X U \right)_{lj} \left( U^{\T} \partial_{\theta}X U \right)_{ij}}{\left( \lambda_j-\lambda_l \right) \left( \lambda_i-\lambda_j \right)}
	\nonumber \\
	&\qquad  - \dfrac{\left( U^{\T} \partial_{\theta}X U \right)_{ij} \left( U^{\T} \partial_{\xi}X U \right)_{ij}}{\left( \lambda_i-\lambda_j \right)^2} 
	\left( \left( U^{\T} \partial_{\xi}X U \right)_{ii} - \left( U^{\T} \partial_{\xi}X U \right)_{jj} \right)\Bigg].
\end{align}
\end{lemma}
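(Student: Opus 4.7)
The plan is to use standard first--order perturbation theory for symmetric eigenvalue problems, and then differentiate the resulting identities iteratively.  Write $X(\theta,\xi) = U D U^\T$ with columns $u_1,\ldots,u_d$ of $U$, and let $D^{\theta} := U^\T \partial_\theta X\, U$, $D^{\xi} := U^\T \partial_\xi X\, U$, etc.  Differentiating the eigen-relation $X u_i = \lambda_i u_i$ in $\theta$ and projecting on $u_i$ (respectively $u_j$ with $j\neq i$) gives
\begin{align*}
    \partial_\theta \lambda_i = D^{\theta}_{ii}, \qquad u_j^\T \partial_\theta u_i = \dfrac{D^{\theta}_{ji}}{\lambda_i-\lambda_j}, \quad j\neq i,
\end{align*}
while normalization $\|u_i\|=1$ yields $u_i^\T\partial_\theta u_i = 0$.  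Hence
\begin{align*}
    \partial_\theta u_i = \sum_{j\neq i} \dfrac{D^{\theta}_{ji}}{\lambda_i-\lambda_j}\, u_j,
\end{align*}
which gives (\ref{eq-lemma 1-1}) immediately.

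For (\ref{eq-lemma 1-2}) the plan is to differentiate $\partial_\theta \lambda_i = u_i^\T (\partial_\theta X) u_i$ in $\xi$: the product rule produces three terms, and the outer ones combine using the expansion of $\partial_\xi u_i$ above together with the symmetry $D^{\theta}_{ji}=D^{\theta}_{ij}$, yielding the stated formula.  The same strategy handles (\ref{eq-lemma 1-3}): differentiate (\ref{eq-lemma 1-2}) once more in $\xi$, using (i) the expansion $\partial_\xi u_i = \sum_{l\neq i} (\lambda_i-\lambda_l)^{-1} D^{\xi}_{li} u_l$ to evaluate $\partial_\xi D^{\alpha}_{ij}=\partial_\xi(u_i^\T \partial_\alpha X u_j)$ for $\alpha\in\{\theta,\xi\partial_\theta\}$, and (ii) the first-order formula $\partial_\xi\lambda_k = D^{\xi}_{kk}$ to differentiate the denominators $\lambda_i-\lambda_j$.

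The only genuine work is bookkeeping in step (iii).  Specifically, differentiating $D^{\xi\partial_\theta}_{ii}$ contributes $(U^\T\partial_\xi^2\partial_\theta X\, U)_{ii}$ plus cross terms of the form $(\partial_\xi u_i)^\T \partial_\xi\partial_\theta X\, u_i$, producing the $4 D^{\xi\partial_\theta}_{ij} D^{\xi}_{ij}/(\lambda_i-\lambda_j)$ summand; differentiating the numerator $D^{\theta}_{ij} D^{\xi}_{ij}$ in the sum of (\ref{eq-lemma 1-2}) produces the terms with $D^{\xi^2}_{ij}$ and the four double-sum terms involving $(\lambda_i-\lambda_l)^{-1}(\lambda_i-\lambda_j)^{-1}$ and $(\lambda_j-\lambda_l)^{-1}(\lambda_i-\lambda_j)^{-1}$; and differentiating the denominator $(\lambda_i-\lambda_j)^{-1}$ produces the last term $-D^{\theta}_{ij} D^{\xi}_{ij}(D^{\xi}_{ii}-D^{\xi}_{jj})/(\lambda_i-\lambda_j)^2$.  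The main obstacle is therefore not conceptual but organisational: one must carefully collect the symmetric contributions arising from $\partial_\xi u_i$ appearing on both sides of the inner products, and keep track of when summation indices are constrained to differ from $i$ versus from $j$.  Once these are sorted, the identity (\ref{eq-lemma 1-3}) reads off directly.
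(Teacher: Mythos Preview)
Your approach is correct and is essentially the same as the paper's: both arguments are first-order perturbation theory, differentiated repeatedly.  The paper phrases everything via the matrix identity $D=U^{\T}XU$ (so that your $u_j^{\T}\partial_\theta u_i$ appears as $(U^{\T}\partial_\theta U)_{ji}$), but the content is identical.

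One small bookkeeping slip in your description of (\ref{eq-lemma 1-3}): differentiating $D^{\xi\partial_\theta}_{ii}$ in $\xi$ yields only a coefficient of $2$ in front of $\sum_{j\neq i} D^{\xi\partial_\theta}_{ij}D^{\xi}_{ij}/(\lambda_i-\lambda_j)$, not $4$.  The remaining $2$ comes from the $\partial_\xi D^{\theta}_{ij}$ piece when you differentiate the numerator of the sum in (\ref{eq-lemma 1-2}), since $\partial_\xi D^{\theta}_{ij}$ contains the term $D^{\xi\partial_\theta}_{ij}$ in addition to the double-sum terms you mention.  The two contributions combine to give the factor $4$ in the final formula.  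This does not affect the validity of your argument, only the attribution of terms.
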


\begin{proof}
Since $D = U^{\T}XU$, we have
\begin{align} \label{eq-1st derivative}
	\partial_{\theta} D
	= \partial_{\theta}U^{\T} XU + U^{\T} \partial_{\theta}X U + U^{\T} X \partial_{\theta}U
	= \partial_{\theta}U^{\T} UD + U^{\T} \partial_{\theta}X U + D U^{\T} \partial_{\theta}U.
\end{align}
Besides,
\begin{align} \label{eq-derivative of U}
	0_d = \partial_{\theta} I_d
	= \partial_{\theta} \left( U^{\T}U \right)
	= \partial_{\theta}U^{\T} U + U^{\T} \partial_{\theta}U.
\end{align}
In particular, this implies 
\begin{equation}\label{e:A6}
\left(\partial_\theta U^\T U\right)_{ii}=\left(U^\T\partial_\theta U\right)_{ii}=0, ~1\le i\le d. 
\end{equation}
The first identity \eqref{eq-lemma 1-1} follows from the diagonal entries of \eqref{eq-1st derivative} and \eqref{eq-derivative of U}.

Now we deduce \eqref{eq-lemma 1-2}. By \eqref{eq-1st derivative},
\begin{align} \label{eq-2nd derivative}
	\partial_{\xi} \partial_{\theta} D
	&= \partial_{\xi} \partial_{\theta}U^{\T} U D + \partial_{\theta}U^{\T} \partial_{\xi}U D + \partial_{\theta}U^{\T} U \partial_{\xi}D \nonumber \\
	&\quad + \partial_{\xi}U^{\T} \partial_{\theta}X U + U^{\T} \partial_{\xi} \partial_{\theta}X U + U^{\T} \partial_{\theta}X \partial_{\xi}U \nonumber \\
	&\quad + \partial_{\xi} D U^{\T} \partial_{\theta}U + D \partial_{\xi}U^{\T} \partial_{\theta}U + D U^{\T} \partial_{\xi} \partial_{\theta}U.
\end{align}
By \eqref{eq-derivative of U}, we have
\begin{align} \label{eq-1.6}
	\left( \partial_{\theta}U^{\T} U \partial_{\xi}D + \partial_{\xi} D U^{\T} \partial_{\theta}U \right)_{ii}
	= \partial_{\xi}\lambda_i \left( \partial_{\theta}U^{\T} U + U^{\T} \partial_{\theta}U \right)_{ii}
	= 0.
\end{align}
Furthermore, taking partial derivative $\partial_\xi $ on both sides of \eqref{eq-derivative of U} yields
\begin{align} \label{eeq-2 derivative of U}
	0_d = \partial_{\xi} \left( \partial_{\theta}U^{\T} U + U^{\T} \partial_{\theta}U \right)
	= \partial_{\xi} \partial_{\theta}U^{\T} U + \partial_{\theta}U^{\T} \partial_{\xi}U + \partial_{\xi}U^{\T} \partial_{\theta}U + U^{\T} \partial_{\xi} \partial_{\theta}U,
\end{align}
which implies
\begin{align} \label{eq-1.5}
	& \left( \partial_{\xi} \partial_{\theta}U^{\T} U D + \partial_{\theta}U^{\T} \partial_{\xi}U D + D \partial_{\xi}U^{\T} \partial_{\theta}U + D U^{\T} \partial_{\xi} \partial_{\theta}U \right)_{ii} \nonumber \\
	&= \lambda_i \left( \partial_{\xi} \partial_{\theta}U^{\T} U + \partial_{\theta}U^{\T} \partial_{\xi}U + \partial_{\xi}U^{\T} \partial_{\theta}U + U^{\T} \partial_{\xi} \partial_{\theta}U \right)_{ii}
	= 0.
\end{align}
Combining \eqref{eq-2nd derivative}, \eqref{eq-1.5} and \eqref{eq-1.6}, we have
\begin{align} \label{eq-1.7+}
	\partial_{\xi} \partial_{\theta} \lambda_i
	= \left( \partial_{\xi}U^{\T} \partial_{\theta}X U + U^{\T} \partial_{\xi} \partial_{\theta}X U + U^{\T} \partial_{\theta}X \partial_{\xi}U \right)_{ii}.
\end{align}

Note that the matrix identity \eqref{eq-1st derivative} is also valid when $\theta$ is replaced by $\xi$. Therefore, the non-diagonal term is
\begin{align} \label{eq-1.6+}
	0 &= \lambda_j \left( \partial_{\xi}U^{\T} U \right)_{ij} + \left( U^{\T} \partial_{\xi}X U \right)_{ij} + \lambda_i \left( U^{\T} \partial_{\xi}U \right)_{ij} \nonumber \\
	&= \left( U^{\T} \partial_{\xi}X U \right)_{ij} + \left( \lambda_i - \lambda_j \right) \left( U^{\T} \partial_{\xi}U \right)_{ij}, \forall 1 \le i \neq j \le d,
\end{align}
where the second equality follows from \eqref{eq-derivative of U}. Thus, by \eqref{eq-1.6+} and \eqref{eq-derivative of U},
\begin{align} \label{eq-1.7}
	&\left( \partial_{\xi}U^{\T} \partial_{\theta}X U + U^{\T} \partial_{\theta}X \partial_{\xi}U \right)_{ii}  \nonumber \\
	&= \left( \partial_{\xi}U^{\T} U U^{\T} \partial_{\theta}X U + U^{\T} \partial_{\theta}X U U^{\T} \partial_{\xi}U \right)_{ii} \nonumber \\
	&= \sum_{j=1}^d \left( \left( \partial_{\xi}U^{\T} U \right)_{ij} \left( U^{\T} \partial_{\theta}X U \right)_{ji} + \left( U^{\T} \partial_{\theta}X U \right)_{ij} \left( U^{\T} \partial_{\xi}U \right)_{ji} \right) \nonumber \\
	&= \sum_{j:j\neq i} \dfrac{ \left( U^{\T} \partial_{\theta}X U \right)_{ji} \left( U^{\T} \partial_{\xi}X U \right)_{ij} +\left( U^{\T} \partial_{\theta}X U \right)_{ij} \left( U^{\T} \partial_{\xi}X U \right)_{ji} }{\lambda_i-\lambda_j}.
\end{align}
Substituting \eqref{eq-1.7} into \eqref{eq-1.7+} and noting the symmetry of the matrices $U^\T \partial_\theta X U$ and $U^\T \partial_\xi X U$,  
we obtain the second identity \eqref{eq-lemma 1-2}.

Finally, we deal with \eqref{eq-lemma 1-3}.  Taking $\partial_\xi$ for  the first term on the right-hand side of \eqref{eq-lemma 1-2},  we have   by \eqref{eq-1.6+} and \eqref{eq-derivative of U},
\begin{align} \label{eq-1.15}
	& \partial_{\xi} \left( U^{\T} \partial_{\xi} \partial_{\theta}X U \right)_{ii} \nonumber \\
	&= \left( \partial_{\xi}U^{\T} \partial_{\xi} \partial_{\theta}X U \right)_{ii}
	+ \left( U^{\T} \partial_{\xi}^2 \partial_{\theta}X U \right)_{ii}
	+ \left( U^{\T} \partial_{\xi} \partial_{\theta}X \partial_{\xi}U \right)_{ii} \nonumber \\
	&= \left( U^{\T} \partial_{\xi}^2 \partial_{\theta}X U \right)_{ii}
	+ \sum_{j=1}^d \left( \partial_{\xi}U^{\T} U \right)_{ij} \left( U^{\T} \partial_{\xi} \partial_{\theta}X U \right)_{ji}
	+ \sum_{j=1}^d \left( U^{\T} \partial_{\xi} \partial_{\theta}X U \right)_{ij} \left( U^{\T} \partial_{\xi}U \right)_{ji} \nonumber \\
	&= \left( U^{\T} \partial_{\xi}^2 \partial_{\theta}X U \right)_{ii}
	+ \sum_{j:j\neq i} \dfrac{\left( U^{\T} \partial_{\xi}X U \right)_{ij} \left( U^{\T} \partial_{\xi} \partial_{\theta}X U \right)_{ji} + \left( U^{\T} \partial_{\xi} \partial_{\theta}X U \right)_{ij} \left( U^{\T} \partial_{\xi}X U \right)_{ji}}{\lambda_i-\lambda_j}\notag\\
	&= \left( U^{\T} \partial_{\xi}^2 \partial_{\theta}X U \right)_{ii}
	+2 \sum_{j:j\neq i} \dfrac{\left( U^{\T} \partial_{\xi}X U \right)_{ij} \left( U^{\T} \partial_{\xi} \partial_{\theta}X U \right)_{ij} }{\lambda_i-\lambda_j}.
\end{align}

Similarly,  it follows from \eqref{eq-1.6+},  \eqref{eq-derivative of U} and \eqref{e:A6} that
\begin{align} \label{eq-1.13}
	&\partial_{\xi} \left( U^{\T} \partial_{\theta}X U \right)_{ij} \nonumber \\
	&= \left( \partial_{\xi}U^{\T} \partial_{\theta}X U \right)_{ij} + \left( U^{\T} \partial_{\xi} \partial_{\theta}X U \right)_{ij} + \left( U^{\T} \partial_{\theta}X \partial_{\xi}U \right)_{ij} \nonumber \\
	&= \left( U^{\T} \partial_{\xi} \partial_{\theta}X U \right)_{ij} + \sum_{l=1}^d \left( \partial_{\xi}U^{\T} U \right)_{il} \left( U^{\T} \partial_{\theta}X U \right)_{lj} + \sum_{l=1}^d \left( U^{\T} \partial_{\theta}X U \right)_{il} \left( U^{\T} \partial_{\xi}U \right)_{lj} \nonumber \\
	&= \left( U^{\T} \partial_{\xi} \partial_{\theta}X U \right)_{ij}
	+ \sum_{l:l\neq i} \dfrac{\left( U^{\T} \partial_{\xi}X U \right)_{il} \left( U^{\T} \partial_{\theta}X U \right)_{lj}}{\lambda_i-\lambda_l}
	+ \sum_{l:l\neq j} \dfrac{\left( U^{\T} \partial_{\theta}X U \right)_{il}\left( U^{\T} \partial_{\xi}X U \right)_{lj} }{\lambda_j-\lambda_l}\notag\\
	&\qquad  + \left(U^{\T} \partial_\theta X U\right)_{ij} \left[\left(\partial_\xi U^{\T} U \right)_{ii}+ \left( U^{\T} \partial_\xi U \right)_{jj} \right]\notag\\&= \left( U^{\T} \partial_{\xi} \partial_{\theta}X U \right)_{ij}
	+ \sum_{l:l\neq i} \dfrac{\left( U^{\T} \partial_{\xi}X U \right)_{il} \left( U^{\T} \partial_{\theta}X U \right)_{lj}}{\lambda_i-\lambda_l}
	+ \sum_{l:l\neq j} \dfrac{ \left( U^{\T} \partial_{\theta}X U \right)_{il}\left( U^{\T} \partial_{\xi}X U \right)_{lj}}{\lambda_j-\lambda_l}. 
\end{align}

Now we deal with the second term on the right-hand side of \eqref{eq-lemma 1-2}.  By \eqref{eq-1.13} and \eqref{eq-lemma 1-1},
\begin{align} \label{eq-1.14}
	&\partial_{\xi} \bigg( \dfrac{\left( U^{\T} \partial_{\theta}X U \right)_{ij} \left( U^{\T} \partial_{\xi}X U \right)_{ij}}{\lambda_i-\lambda_j} \bigg) \nonumber \\
	&= \dfrac{\partial_{\xi} \left( U^{\T} \partial_{\theta}X U \right)_{ij} \left( U^{\T} \partial_{\xi}X U \right)_{ij} + \left( U^{\T} \partial_{\theta}X U \right)_{ij} \partial_{\xi} \left( U^{\T} \partial_{\xi}X U \right)_{ij}}{\lambda_i-\lambda_j} \nonumber \\
	&\qquad - \dfrac{\left( U^{\T} \partial_{\theta}X U \right)_{ij} \left( U^{\T} \partial_{\xi}X U \right)_{ij} }{\left( \lambda_i-\lambda_j \right)^2} \left( \partial_{\xi}\lambda_i - \partial_{\xi}\lambda_j \right) \nonumber \\
	&= \dfrac{\left( U^{\T} \partial_{\xi} \partial_{\theta}X U \right)_{ij} \left( U^{\T} \partial_{\xi}X U \right)_{ij} + \left( U^{\T} \partial_{\theta}X U \right)_{ij} \left( U^{\T} \partial_{\xi}^2X U \right)_{ij}}{\lambda_i-\lambda_j} \nonumber \\
	&\qquad + \sum_{l:l\neq i} \dfrac{\left( U^{\T} \partial_{\xi}X U \right)_{il} \left( U^{\T} \partial_{\theta}X U \right)_{lj} \left( U^{\T} \partial_{\xi}X U \right)_{ij}}{\left( \lambda_i-\lambda_l \right) \left( \lambda_i-\lambda_j \right)}
	+ \sum_{l:l\neq j} \dfrac{\left( U^{\T} \partial_{\theta}X U \right)_{il}\left( U^{\T} \partial_{\xi}X U \right)_{lj}  \left( U^{\T} \partial_{\xi}X U \right)_{ij}}{\left( \lambda_j-\lambda_l \right) \left( \lambda_i-\lambda_j \right)} \nonumber \\
	&\qquad +\sum_{l:l\neq i} \dfrac{\left( U^{\T} \partial_{\theta}X U \right)_{ij} \left( U^{\T} \partial_{\xi}X U \right)_{il} \left( U^{\T} \partial_{\xi}X U \right)_{lj}}{\left( \lambda_i-\lambda_l \right) \left( \lambda_i-\lambda_j \right)} + \sum_{l:l\neq j} \dfrac{\left( U^{\T} \partial_{\theta}X U \right)_{ij} \left( U^{\T} \partial_{\xi}X U \right)_{il} \left( U^{\T} \partial_{\xi}X U \right)_{lj}}{\left( \lambda_j-\lambda_l \right) \left( \lambda_i-\lambda_j \right)}
	\nonumber \\
	&\qquad - \dfrac{\left( U^{\T} \partial_{\theta}X U \right)_{ij} \left( U^{\T} \partial_{\xi}X U \right)_{ij} }{\left( \lambda_i-\lambda_j \right)^2} 
	\left( \left( U^{\T} \partial_{\xi}X U \right)_{ii} - \left( U^{\T} \partial_{\xi}X U \right)_{jj} \right).
\end{align}
Then the third equality \eqref{eq-lemma 1-3} follows from \eqref{eq-lemma 1-2}, \eqref{eq-1.15} and \eqref{eq-1.14}.
\end{proof}

In particular, if we choose $\theta = X_{kh}$, we have for $1\le i, j\le d$,
\begin{align}\label{e:A18}
\left(U^\T \partial_\theta X U\right)_{ij} &= \left(U_{ki}U_{hj}+U_{hi}U_{kj}\right)\1_{[k\neq h]}+ U_{ki}U_{kj}\1_{[k=h]}\notag\\
&= \left(U_{ki}U_{hj}+U_{hi}U_{kj}\right)\left(\1_{[k\neq h]} +\1_{[k=h]}/2\right). 
\end{align}
Applying \eqref{e:A18} to Lemma \ref{Lemma-eigen derivative} yields 
 \begin{align} \label{eq-1.1}
	\dfrac{\partial \lambda_i}{\partial X_{kh}}
	= 2U_{ki} U_{hi} \1_{[k \neq h]} + U_{ki}^2 \1_{[k=h]},
\end{align}
\begin{align} \label{eq-1.2}
	\dfrac{\partial^2 \lambda_i}{\partial X_{kh}^2}
	= 2 \sum_{j:j \neq i} \dfrac{\left| U_{ki} U_{hj} + U_{hi} U_{kj} \right|^2}{\lambda_i - \lambda_j} \1_{[k \neq h]} + 2 \sum_{j:j \neq i} 
	\dfrac{\left| U_{ki} U_{kj} \right|^2}{\lambda_i - \lambda_j} \1_{[k=h]},
\end{align}
and
\begin{align} \label{eq-1.14-cross derivative}
&\dfrac{\partial^2 \lambda_i}{\partial X_{kh} \partial X_{k'h'}}\notag\\
&= 2 \sum_{j:j\neq i} \dfrac{(U_{ki}U_{hj} + U_{hi}U_{kj})(\1_{[k\neq h]} + \1_{[k=h]}/2) (U_{k'i}U_{h'j} + U_{h'i}U_{k'j})(\1_{[k'\neq h']} 
+ \1_{[k'=h']}/2)}{\lambda_i - \lambda_j}. 
\end{align}


Recall  that   $\lambda_i=\tilde \Phi_i(X)=\tilde \Phi_i$ is the $i$-th biggest eigenvalue of $X$ and that
 $$\tilde \Psi_{ij}=\tilde \Psi_{ij}(X)
=\frac1{\lambda_i-\lambda_j}=\frac{1}{\tilde \Phi_i(X)-\tilde \Phi_j(X)}. $$
Consider a symmetric matrix $(b_{kh})_{d\times d}$. Let $x_{kh}=b_{kh}\1_{[k\neq h]}+ \sqrt{2}b_{kh}\1_{[k=h]}$ and define 
$\Phi_i=\Phi_i(b):=\tilde \Phi_i(X)$ for $i=1,\dots, d$. 
Thus by the chain rule,  we have for $1\le i, k, h\le d$, \[\frac{\partial \Phi_i}{\partial b_{kh}} = \frac{\partial \tilde \Phi_i}{\partial X_{kh}}\1_{[k\neq h]} 
+ \sqrt 2\frac{\partial \tilde \Phi_i}{\partial X_{kh}}\1_{[k=h]} = \frac{\partial \lambda_i}{\partial X_{kh}}\1_{[k\neq h]} 
+ \sqrt 2\frac{\partial \lambda_i}{\partial X_{kh}}\1_{[k=h]}.\] 
We also define 
\[\Psi_{ij}=\Psi_{ij}(b)=\tilde \Psi_{ij}(X)=\frac1{\lambda_i-\lambda_j}=\frac{1}{\Phi_i(b)-\Phi_j(b)}.
\]

The following lemma is concerned with partial derivatives of $\Phi_i(b)$ and $\Psi_{ij}(b)$.

\begin{lemma} \label{Lemma-derivative identity}
\begin{align} \label{eq-2.2}
	\sum_{k \le h} \dfrac{\partial^2 \Phi_i}{\partial b_{kh}^2}
	= 2 \sum_{j:j \neq i} \dfrac{1}{\lambda_i - \lambda_j},
\end{align}
\begin{align} \label{eq-2.4'}
	\sum_{k' \le h'} \dfrac{\partial^3 \Phi_i}{\partial b_{kh} \partial b_{k'h'}^2}
	= &2\left( 2 \times \1_{[k<h]} + \sqrt{2} \times \1_{[k=h]} \right)  \sum_{j:j\neq i} \dfrac{U_{kj} U_{hj} - U_{ki} U_{hi}}{(\lambda_i - \lambda_j)^2}\notag\\
	=&2 \sum_{j:j\neq i} \dfrac{1}{(\lambda_i - \lambda_j)^2} \left( \dfrac{\partial \Phi_j}{\partial b_{kh}} - \dfrac{\partial \Phi_i}{\partial b_{kh}} \right),
\end{align}

\begin{align} \label{eq-2.11}
	\sum_{k \le h} \dfrac{\partial^2 \Psi_{ij}}{\partial b_{kh}^2}
	= \dfrac{4}{(\lambda_i - \lambda_j)^3}
	+ \dfrac{1}{(\lambda_i - \lambda_j)} \sum_{l:l \neq i,j} \dfrac{2}{\left( \lambda_i - \lambda_l \right) \left( \lambda_j - \lambda_l \right)}, \text{ for } i\neq j. 
\end{align}

\end{lemma}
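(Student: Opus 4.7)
The plan is to prove all three identities by systematically applying the chain rule to pass from derivatives with respect to $X_{kh}$ to derivatives with respect to $b_{kh}$ (recalling $X_{kh}=b_{kh}$ for $k<h$ and $X_{kk}=\sqrt{2}b_{kk}$), substituting the eigenvalue derivative formulas \eqref{eq-1.1}--\eqref{eq-1.14-cross derivative}, and then reducing the combinatorial sums over the entries $U_{k\ell}$ via the orthogonality identity in Lemma \ref{lem1}. The three identities will be proved in order, each using the previous ones.

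For \eqref{eq-2.2}, I would start from the chain rule, which gives
$\frac{\partial^2 \Phi_i}{\partial b_{kh}^2}=\frac{\partial^2\lambda_i}{\partial X_{kh}^2}$ when $k<h$ and $\frac{\partial^2 \Phi_i}{\partial b_{kk}^2}=2\frac{\partial^2\lambda_i}{\partial X_{kk}^2}$, and then plug in \eqref{eq-1.2}. Factoring out $2\sum_{j\neq i}(\lambda_i-\lambda_j)^{-1}$, the task reduces to showing
\[
\sum_{k<h}(U_{ki}U_{hj}+U_{hi}U_{kj})^2 + 2\sum_{k}U_{ki}^2U_{kj}^2 = 1.
\]
Doubling the off-diagonal sum using symmetry turns the left side into $\tfrac{1}{2}\sum_{k,h}(U_{ki}U_{hj}+U_{hi}U_{kj})^2$, which by Lemma \ref{lem1} (applied to the orthonormal columns $U_{\cdot i}$ and $U_{\cdot j}$) equals $1$.

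For \eqref{eq-2.4'}, the key observation is that interchanging the order of differentiation lets me write $\sum_{k'\le h'}\frac{\partial^3\Phi_i}{\partial b_{kh}\partial b_{k'h'}^2}=\frac{\partial}{\partial b_{kh}}\bigl(2\sum_{j\neq i}(\lambda_i-\lambda_j)^{-1}\bigr)$ by \eqref{eq-2.2}. Differentiating term by term produces the second equality in \eqref{eq-2.4'} immediately, and computing $\frac{\partial\Phi_j}{\partial b_{kh}}-\frac{\partial\Phi_i}{\partial b_{kh}}$ via \eqref{eq-1.1} (which yields $2(U_{kj}U_{hj}-U_{ki}U_{hi})$ for $k<h$ and $\sqrt{2}(U_{kj}^2-U_{ki}^2)$ for $k=h$) gives the first equality after factoring out $2\1_{[k<h]}+\sqrt{2}\1_{[k=h]}$.

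For \eqref{eq-2.11}, I would expand $\Psi_{ij}=(\Phi_i-\Phi_j)^{-1}$ twice to get
\[
\frac{\partial^2\Psi_{ij}}{\partial b_{kh}^2}
=\frac{2\bigl(\partial_{b_{kh}}\Phi_i-\partial_{b_{kh}}\Phi_j\bigr)^2}{(\lambda_i-\lambda_j)^3}
-\frac{\partial_{b_{kh}}^2\Phi_i-\partial_{b_{kh}}^2\Phi_j}{(\lambda_i-\lambda_j)^2},
\]
and sum over $k\le h$. The first sum $\sum_{k\le h}(\partial_{b_{kh}}\Phi_i-\partial_{b_{kh}}\Phi_j)^2$ evaluates to $4$ by the same Lemma \ref{lem1} trick used for \eqref{eq-2.2} (again using orthonormality of $U_{\cdot i},U_{\cdot j}$), giving a contribution of $8(\lambda_i-\lambda_j)^{-3}$. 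For the second sum, \eqref{eq-2.2} reduces it to $2\sum_{l\neq i}(\lambda_i-\lambda_l)^{-1}-2\sum_{l\neq j}(\lambda_j-\lambda_l)^{-1}$; isolating the $l=j$ and $l=i$ terms produces $\tfrac{4}{\lambda_i-\lambda_j}$ and applying the partial fraction $\tfrac{1}{\lambda_i-\lambda_l}-\tfrac{1}{\lambda_j-\lambda_l}=\tfrac{\lambda_j-\lambda_i}{(\lambda_i-\lambda_l)(\lambda_j-\lambda_l)}$ to the remaining terms yields $-2(\lambda_i-\lambda_j)\sum_{l\neq i,j}\tfrac{1}{(\lambda_i-\lambda_l)(\lambda_j-\lambda_l)}$. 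Combining gives $\tfrac{8}{(\lambda_i-\lambda_j)^3}-\tfrac{4}{(\lambda_i-\lambda_j)^3}+\tfrac{2}{\lambda_i-\lambda_j}\sum_{l\neq i,j}\tfrac{1}{(\lambda_i-\lambda_l)(\lambda_j-\lambda_l)}$, which is exactly \eqref{eq-2.11}.

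The main obstacle is the bookkeeping in \eqref{eq-2.2}: one must carefully treat the $k=h$ and $k<h$ cases separately, track the extra factor of $\sqrt{2}$ from the diagonal entries of $X$, and then recombine the split sums into a symmetric double sum amenable to Lemma \ref{lem1}. Once this identity is established cleanly, \eqref{eq-2.4'} follows by differentiation and \eqref{eq-2.11} follows by straightforward algebra plus one more application of the same orthonormality trick.
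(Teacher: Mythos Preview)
Your proof is correct, and for \eqref{eq-2.2} and \eqref{eq-2.11} it is essentially identical to the paper's: chain rule to pass from $b_{kh}$ to $X_{kh}$, substitute the Hessian formula \eqref{eq-1.2}, symmetrize to a full double sum, and invoke Lemma~\ref{lem1}; then for \eqref{eq-2.11} expand $\Psi_{ij}=(\Phi_i-\Phi_j)^{-1}$ and reduce via \eqref{eq-2.2} and the same orthonormality trick.

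Where you genuinely diverge is in \eqref{eq-2.4'}. The paper attacks this identity directly from the third-derivative formula \eqref{eq-lemma 1-3}: it sums the explicit expression over $k'\le h'$ and collapses the resulting terms using the orthogonality identity \eqref{eq-2.6''}. Your route is cleaner: you observe that $\sum_{k'\le h'}\partial^3\Phi_i/\partial b_{kh}\partial b_{k'h'}^2 = \partial_{b_{kh}}\bigl(\sum_{k'\le h'}\partial^2\Phi_i/\partial b_{k'h'}^2\bigr)$ by equality of mixed partials (valid on the open set of distinct eigenvalues where $\Phi_i$ is real-analytic), and then simply differentiate the already-established \eqref{eq-2.2}. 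This yields the second line of \eqref{eq-2.4'} immediately, and the first line follows from \eqref{eq-1.1}. The advantage of your approach is that it bypasses the long formula \eqref{eq-lemma 1-3} entirely for this particular identity; the paper's approach, while heavier here, has the virtue of exercising \eqref{eq-lemma 1-3} explicitly, which is the formula one would need if the inner sum did not simplify so nicely.
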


\begin{proof}

By \eqref{eq-1.2} and the orthogonality of $U$, we have
\begin{align*}
	\sum_{k \le h} \dfrac{\partial^2 \Phi_i}{\partial b_{kh}^2}
	&= \sum_{k < h} \dfrac{\partial^2 \tilde \Phi_i}{\partial X_{kh}^2} + 2\sum_{k=1}^d \dfrac{\partial^2 \tilde \Phi_i}{\partial X_{kk}^2} \nonumber \\
	&= 2\sum_{k < h}  \sum_{j:j \neq i} \dfrac{\left| U_{ki} U_{hj} + U_{hi} U_{kj} \right|^2}{\lambda_i - \lambda_j} + 4\sum_{k=1}^d  \sum_{j:j \neq i} \dfrac{\left| U_{ki} U_{kj} \right|^2}{\lambda_i - \lambda_j} \nonumber \\
	&= \sum_{j:j \neq i} \dfrac{\sum_{k, h} \left| U_{ki} U_{hj} + U_{hi} U_{kj} \right|^2}{\lambda_i - \lambda_j} = 2 \sum_{j:j \neq i} \dfrac{1}{\lambda_i - \lambda_j},
\end{align*}
where the last equality follows from the orthogonality of $U$ and Lemma \ref{lem1}. This proves \eqref{eq-2.2}.

Next, we show \eqref{eq-2.4'}. By the chain rule, we can write
\begin{align} \label{eq-4.3'}
	\sum_{k' \le h'} \dfrac{\partial^3 \Phi_i}{\partial b_{kh} \partial b_{k'h'}^2}
	= (\1_{[k<h]} + \sqrt{2} \, \1_{[k=h]}) \bigg( \sum_{k'<h'} \dfrac{\partial^3 \tilde \Phi_i}{\partial X_{kh} \partial X_{k'h'}^2} 
	+ 2\sum_{k'=1}^d \dfrac{\partial^3 \tilde \Phi_i}{\partial X_{kh} \partial X_{k'k'}^2} \bigg).
\end{align}
We choose the parameter $\theta = X_{kh}$ and $\xi = X_{h'k'}$ in \eqref{eq-lemma 1-3}. The terms with second order or third order derivative 
vanish and we only need to consider the terms with only the first order derivative. Note that for indices $1 \le p_1, p_2, q_1, q_2 \le d$ 
\begin{align} \label{eq-2.6''}
	& \sum_{k'<h'} \left( U^{\T} \dfrac{\partial X}{\partial X_{k'h'}} U \right)_{p_1p_2} \left( U^{\T} \dfrac{\partial X}{\partial X_{k'h'}} U \right)_{q_1q_2}
	+ 2 \sum_{k'=1}^d \left( U^{\T} \dfrac{\partial X}{\partial X_{k'k'}} U \right)_{p_1p_2} \left( U^{\T} \dfrac{\partial X}{\partial X_{k'k'}} U \right)_{q_1q_2} \nonumber \\
	&= \sum_{k'<h'} (U_{k'p_1}U_{h'p_2} + U_{h'p_1}U_{k'p_2}) (U_{k'q_1}U_{h'q_2} + U_{h'q_1}U_{k'q_2})
	+ 2\sum_{k'=1}^d U_{k'p_1}U_{k'p_2} U_{k'q_1}U_{k'q_2} \nonumber \\
	&= \bigg( \sum_{k'=1}^d U_{k'p_1}U_{k'q_1} \bigg) \bigg( \sum_{h'=1}^d U_{h'p_2}U_{h'q_2} \bigg) + \bigg( \sum_{k'=1}^d U_{k'p_1}U_{k'q_2} \bigg) 
	\bigg( \sum_{h'=1}^d U_{h'p_2}U_{h'q_1} \bigg) \nonumber \\
	&= \1_{[p_1=q_1]} \1_{[p_2=q_2]} + \1_{[p_1=q_2]} \1_{[p_2=q_1]}.
\end{align}
Now taking sum over $(k', h')$ for \eqref{eq-lemma 1-3} (i.e. taking sum over the non-zero terms including $U^\T\partial_\xi XU$), and 
applying \eqref{eq-2.6''}, we have
\begin{align*}
\sum_{k' \le h'} \dfrac{\partial^3 \Phi_i}{\partial b_{kh} \partial b_{k'h'}^2}
	=2\left(\1_{[k<h]} + \sqrt{2} \, \1_{[k=h]}\right)\sum_{j:j\neq i}\bigg(\frac{(U^\T \partial_\theta X)_{jj}}{(\lambda_i-\lambda_j)^2}- \frac{(U^\T \partial_\theta X)_{ii}}{(\lambda_i-\lambda_j)^2} \bigg). 
\end{align*}
This together with \eqref{e:A18} yields  the first equality of \eqref{eq-2.4'}.  The second equality of \eqref{eq-2.4'} now follows \eqref{eq-1.1}:
\begin{align*} 
	&2\left( 2 \, \1_{[k<h]} + \sqrt{2} \,\1_{[k=h]} \right) \sum_{j:j\neq i} \dfrac{U_{kj} U_{hj} - U_{ki} U_{hi}}{(\lambda_i - \lambda_j)^2} \nonumber \\
	&=2 \left( \1_{[k<h]} + \sqrt{2} \, \1_{[k=h]} \right)  \sum_{j:j\neq i} \dfrac{1}{(\lambda_i - \lambda_j)^2} \bigg( \dfrac{\partial \tilde \Phi_j}
	{\partial X_{kh}} - \dfrac{\partial\tilde\Phi_i}{\partial X_{kh}} \bigg) \nonumber \\
	&= 2 \sum_{j:j\neq i} \dfrac{1}{(\lambda_i - \lambda_j)^2} \left( \dfrac{\partial \Phi_j}{\partial b_{kh}} - \dfrac{\partial \Phi_i}{\partial b_{kh}} \right). 
\end{align*}
This proves \eqref{eq-2.4'}.

Now we show \eqref{eq-2.11}. Note that for $i \neq j$,
\begin{align} \label{eq-2.5}
	\sum_{k \le h} \dfrac{\partial^2\Psi_{ij}}{\partial b_{kh}^2}
	&= \sum_{k \le h} \dfrac{\partial}{\partial b_{kh}} \left( - \Psi_{ij}^{2} \dfrac{\partial (\Phi_i - \Phi_j)}{\partial b_{kh}} \right) \nonumber \\
	&= \sum_{k \le h} 2\Psi_{ij}^{3} \left( \dfrac{\partial (\Phi_i - \Phi_j)}{\partial b_{kh}} \right)^2
	- \sum_{k \le h}  \Psi_{ij}^{2} \dfrac{\partial^2 (\Phi_i - \Phi_j)}{\partial b_{kh}^2} .
\end{align}
For the first term of \eqref{eq-2.5}, by \eqref{eq-1.1} and the orthogonality of the columns of $U$, for $i \neq j$, we have
\begin{align} \label{eq-2.6}
	\sum_{k \le h} 2\Psi_{ij}^{3} \bigg( \dfrac{\partial (\Phi_i - \Phi_j)}{\partial b_{kh}} \bigg)^2
	&= \dfrac{2}{(\Phi_i - \Phi_j)^3} \left( \sum_{k<h} \left( \dfrac{\partial (\Phi_i - \Phi_j)}{\partial b_{kh}} \right)^2 
	+ \sum_{k=1}^d \left( \dfrac{\partial (\Phi_i - \Phi_j)}{\partial b_{kk}} \right)^2 \right) \nonumber \\
	&= \dfrac{2}{(\Phi_i - \Phi_j)^3} \left( \sum_{k<h} \bigg( \dfrac{\partial (\tilde \Phi_i - \tilde\Phi_j)}{\partial X_{kh}} \bigg)^2
	+ 2\sum_{k=1}^d \bigg( \dfrac{\partial (\tilde\Phi_i - \tilde\Phi_j)}{\partial X_{kk}} \bigg)^2 \right) \nonumber \\
	&= \dfrac{2}{(\Phi_i - \Phi_j)^3} \left( 4 \sum_{k<h} \left( U_{ki}U_{hi} - U_{kj}U_{hj} \right)^2 
	+ 2 \sum_{k=1}^d \left( U_{ki}^2 - U_{kj}^2 \right)^2 \right) \nonumber \\
	&= \dfrac{4}{(\Phi_i - \Phi_j)^3 } \sum_{k,h=1}^d \left( U_{ki}U_{hi} - U_{kj}U_{hj} \right)^2 \nonumber \\
	&= \dfrac{8}{(\Phi_i - \Phi_j)^3}
	= \dfrac{8}{(\lambda_i - \lambda_j)^3},
\end{align}
where the last step follows from Lemma \ref{lem1}.

For the second term of \eqref{eq-2.5}, we have
\begin{align} \label{eq-2.7}
	\sum_{k \le h}  \Psi_{ij}^{2} \dfrac{\partial^2 (\Phi_i - \Phi_j)}{\partial b_{kh}^2} 
	&= \dfrac{1}{(\Phi_i - \Phi_j)^2} \left( \sum_{k<h} \dfrac{\partial^2 \Phi_i}{\partial b_{kh}^2} 
	+ \sum_{k=1}^d \dfrac{\partial^2 \Phi_i}{\partial b_{kk}^2} - \sum_{k<h} \dfrac{\partial^2 \Phi_j}{\partial b_{kh}^2} 
	- \sum_{k=1}^d \dfrac{\partial^2 \Phi_j}{\partial b_{kk}^2} \right) \nonumber \\
	&= \dfrac{1}{(\Phi_i - \Phi_j)^2} \left( \sum_{k<h} \dfrac{\partial^2 \tilde\Phi_i}{\partial X_{kh}^2} 
	+ 2 \sum_{k=1}^d \dfrac{\partial^2 \tilde\Phi_i}{\partial X_{kk}^2} - \sum_{k<h} \dfrac{\partial^2 \tilde\Phi_j}{\partial X_{kh}^2} 
	- 2\sum_{k=1}^d \dfrac{\partial^2 \tilde\Phi_j}{\partial X_{kk}^2} \right).
\end{align}
By \eqref{eq-1.2}, the orthogonality of the columns of $U$, and Lemma \ref{lem1}, for $i \neq j$, we have
\begin{align} \label{eq-2.8}
	\sum_{k<h} \dfrac{\partial^2\tilde \Phi_i}{\partial X_{kh}^2}
	+ 2 \sum_{k=1}^d \dfrac{\partial^2 \tilde\Phi_i}{\partial X_{kk}^2}
	&= \sum_{k<h} 2 \sum_{l:l \neq i} \dfrac{\left| U_{ki}U_{hl}
		+ U_{hi}U_{kl}\right|^2}{\lambda_i - \lambda_l}
	+ 2 \sum_{k=1}^d 2 \sum_{l:l \neq i} \dfrac{\left| U_{ki}U_{kl} \right|^2}{\lambda_i - \lambda_l} \nonumber \\
	&= \sum_{l:l \neq i} \dfrac{2\sum_{k<h} \left| U_{ki}U_{hl} + U_{hi} U_{kl} \right|^2 
		+ 4 \sum_k \left| U_{ki}U_{kl} \right|^2}{\lambda_i - \lambda_l} \nonumber \\
	&= \sum_{l:l \neq i} \dfrac{\sum_{k,h}  \left| U_{ki} U_{hl} + U_{hi} U_{kl} \right|^2}{\lambda_i - \lambda_l} 
	= \sum_{l:l \neq i} \dfrac{2}{\lambda_i - \lambda_l}.
\end{align}
Similarly, we have
\begin{align} \label{eq-2.9}
	\sum_{k<h} \dfrac{\partial^2 \tilde\Phi_j}{\partial X_{kh}^2} + 2 \sum_{k=1}^d \dfrac{\partial^2 \tilde\Phi_j}{\partial X_{kk}^2}
	= \sum_{l:l \neq j} \dfrac{2}{\lambda_j - \lambda_l}.
\end{align}
Putting \eqref{eq-2.8} and \eqref{eq-2.9} to \eqref{eq-2.7} yields that the second term of \eqref{eq-2.5} now is
\begin{align} \label{eq-2.10}
	\sum_{k \le h}  \Psi_{ij}^{2} \dfrac{\partial^2 (\Phi_i - \Phi_j)}{\partial b_{kh}^2} 
	&= \dfrac{1}{(\lambda_i - \lambda_j)^2} \Bigg( \sum_{l:l \neq i} \dfrac{2}{\lambda_i - \lambda_l}
	- \sum_{l:l \neq j} \dfrac{2}{\lambda_j- \lambda_l} \Bigg).
\end{align}
By substituting \eqref{eq-2.6} and \eqref{eq-2.10} into \eqref{eq-2.5}, we obtain
\begin{align*}
	\sum_{k \le h} \dfrac{\partial^2 \Psi_{ij}}{\partial b_{kh}^2}
	&= \dfrac{8}{(\lambda_i - \lambda_j)^3}
	- \dfrac{1}{(\lambda_i - \lambda_j)^2} \Bigg( \sum_{l:l \neq i} \dfrac{2}{\lambda_i - \lambda_l} 
	- \sum_{l:l \neq j} \dfrac{2}{\lambda_j - \lambda_l} \Bigg) \nonumber \\
	&= \dfrac{4}{(\lambda_i - \lambda_j)^3}
	- \dfrac{1}{(\lambda_i - \lambda_j)^2} \Bigg( \sum_{l:l \neq i,j} \dfrac{2}{\lambda_i - \lambda_l} 
	- \sum_{l:l \neq i,j} \dfrac{2}{\lambda_j - \lambda_l} \Bigg) \nonumber \\
	&= \dfrac{4}{(\lambda_i - \lambda_j)^3}
	- \dfrac{1}{(\lambda_i - \lambda_j)^2} \sum_{l:l \neq i,j} \dfrac{2\left( \lambda_j - \lambda_i \right)}
	{\left( \lambda_i- \lambda_l \right) \left( \lambda_j - \lambda_l \right)} \nonumber \\
	&= \dfrac{4}{(\lambda_i - \lambda_j)^3}
	+ \dfrac{1}{(\lambda_i - \lambda_j)} \sum_{l:l \neq i,j} \dfrac{2}{\left( \lambda_i - \lambda_l \right) \left( \lambda_j - \lambda_l \right)}.
\end{align*}
This proves \eqref{eq-2.11}. 
%
\end{proof}

\bibliographystyle{plain}
\bibliography{Reference}

\end{document}